\def\COMMENT#1{}
\let\COMMENT=\footnote
\def\TASK#1{}
\newdimen\margin   
\def\textno#1&#2\par{%
    \margin=\hsize
    \advance\margin by -4\parindent
           \setbox1=\hbox{\sl#1}%
    \ifdim\wd1 < \margin
       $$\box1\eqno#2$$%
    \else
       \bigbreak
       \hbox to \hsize{\indent$\vcenter{\advance\hsize by -3\parindent
       \sl\noindent#1}\hfil#2$}%
       \bigbreak
    \fi}
\newtheorem{thm}{Theorem}[section]
\newtheorem{define}[thm]{Definition}
\newtheorem{lem}[thm]{Lemma}
\newtheorem{claim}[thm]{Claim}
\newtheorem{fact}[thm]{Fact}
\newtheorem{conj}[thm]{Conjecture}
\newtheorem{prop}[thm]{Proposition}
\newtheorem*{thm*}{Theorem}
\newtheorem*{define*}{Definition}
\newtheorem*{examp*}{Example}
\newtheorem*{lem*}{Lemma}
\newtheorem*{claim*}{Claim}
\newtheorem*{fact*}{Fact}
\newtheorem*{col*}{Corollary}
\newtheorem*{conj*}{Conjecture}
\newtheorem*{case1}{Case 1}
\newtheorem*{case2}{Case 2}
\newtheorem*{case3}{Case 3}
\newtheorem*{prop:m2h1h2}{Proposition~\ref{prop:m2h1h2}}
\title{Towards the $0$-statement of the Kohayakawa-Kreuter Conjecture}
\author{Joseph Hyde}
\thanks{J.H. was supported by UKRI Future Leaders Fellowship grant MR/S016325/1 and ERC Advanced Grant 101020255.}
\begin{document}

\maketitle
\begin{abstract}
    In this paper, we study asymmetric Ramsey properties of the random graph $G_{n,p}$. 
    Let $r \in \mathbb{N}$ and $H_1, \ldots, H_r$ be graphs.
    We write $G_{n,p} \to (H_1, \ldots, H_r)$ to denote the property that whenever we colour the edges of $G_{n,p}$ with colours from the set $[r] := \{1, \ldots, r\}$ there exists $i \in [r]$ and a copy of $H_i$ in $G_{n,p}$ monochromatic in colour $i$. 
    There has been much interest in determining the asymptotic threshold function for this property. R\"{o}dl and Ruci\'{n}ski~\cite{rr1, rrrandom, rr2} determined a threshold function for the general symmetric case; that is, when $H_1 = \cdots = H_r$.
    A conjecture of Kohayakawa and Kreuter~\cite{kk}, if true, would fully resolve the asymmetric problem.
    Recently, the $1$-statement of this conjecture was confirmed by Mousset, Nenadov and Samotij~\cite{mns}.
    
    Building on work of Marciniszyn, Skokan, Sp\"{o}hel and Steger~\cite{msss}, we reduce the $0$-statement of Kohayakawa and Kreuter's conjecture to a certain deterministic subproblem.
    To demonstrate the potential of this approach, we show this subproblem can be resolved for almost all pairs of regular graphs. 
    This therefore resolves the $0$-statement for all such pairs of graphs.
\end{abstract}

\section{Introduction}\label{sec:intro}

Ramsey theory is one of the most studied areas in modern combinatorics. Let $r \in \mathbb{N}$ and let $G, H_1, \ldots, H_r$ be graphs.
We write $G \to (H_1, \ldots, H_r)$ to denote the property that whenever we colour the edges of $G$ with colours from the set $[r] := \{1, \ldots, r\}$ there exists $i \in [r]$ and a copy of $H_i$ in $G$ monochromatic in colour $i$. 
Thus, in this notation, the classical result of Ramsey~\cite{r} asserts that for $n$ sufficiently large $K_n \to (H_1, \ldots, H_r)$. 
One may posit that this is only true because $K_n$ is very dense, but Folkman~\cite{f}, and in a more general setting  Ne\v{s}et\v{r}il and R\"{o}dl~\cite{nr}, proved that for any graph $H$ there are locally sparse graphs $G = G(H)$ such that $G \to (H_1, \ldots, H_r)$ when $H_1 = \cdots = H_r = H$.

\subsection{Symmetric Ramsey properties: R\"{o}dl and Ruci\'{n}ski's theorem}

If we transfer our study of the Ramsey property to the random setting, we discover that such graphs $G$ are in fact very common.
Let $G_{n,p}$ be the binomial random graph with $n$ vertices and edge probability $p$. 
Improving on earlier work of Frankl and R\"{o}dl~\cite{fr}, \L{}uczak, Ruci\'{n}ski and Voigt~\cite{lrv} proved that $p = n^{-1/2}$ is a threshold for the property $G_{n,p} \to (K_3, K_3)$. 
Following this, R\"{o}dl and Ruci\'{n}ski~\cite{rr1, rrrandom, rr2} determined a threshold for the general symmetric case. 
For a graph $H$, we define \[d_2(H) := 
    \begin{cases}
        (e_H - 1)/(v_H - 2)         & \quad \text{if $H$ is non-empty with} \ v(H) \geq 3,\\
        1/2                         & \quad \text{if} \ H \cong K_2,\\    
        0                           & \quad \text{otherwise}
    \end{cases}\] and the \emph{$2$-density of $H$} to be \[m_2(H) := \max\{d_2(J): J \subseteq H\}.\] 
    
We say that a graph $H$ is \emph{$2$-balanced} if $d_2(H) = m_2(H)$, and \emph{strictly $2$-balanced} if for all proper subgraphs $J \subsetneq H$, we have $d_2(J) < m_2(H)$.

\begin{thm}[R\"{o}dl and Ruci\'{n}ski~\cite{rr2}]\label{thm:rr}
Let $r \geq 2$ and let $H$ be a non-empty graph such that at least one component of $H$ is not a star. If $r = 2$, then in addition restrict $H$ to having no component which is a path on $3$ edges. Then there exist positive constants $b, B > 0$ such that \[\lim\limits_{n \to \infty} \mathbb{P}[G_{n,p} \to (\underbrace{H, \ldots, H}_{r \ times})] = 
\begin{cases}
    0       & \quad \text{if } p \leq bn^{-1/m_2(H)},\\
    1       & \quad \text{if } p \geq Bn^{-1/m_2(H)}.
\end{cases}\] 
\end{thm}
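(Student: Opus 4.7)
The plan is to attack the $0$- and $1$-statements separately, by quite different methods.

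For the $0$-statement, the goal is to exhibit an $r$-edge-colouring of $G_{n,p}$ with no monochromatic copy of $H$ when $p \leq bn^{-1/m_2(H)}$ for $b$ small. Let $H^{\ast} \subseteq H$ be a strictly $2$-balanced subgraph with $d_2(H^{\ast}) = m_2(H)$; since every copy of $H$ contains a copy of $H^{\ast}$, it suffices to destroy all monochromatic copies of $H^{\ast}$. A first-moment computation, together with Janson-type bounds on overlapping copies, shows that at this density the copies of $H^{\ast}$ in $G_{n,p}$ are sparsely overlapping: with high probability every copy of $H^{\ast}$ contains at least one \emph{private} edge, lying in no other copy. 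One can then process the copies greedily, recolouring a private edge of each copy to avoid monochromaticity. This is essentially the edge-exposure/deletion argument later refined by Marciniszyn, Skokan, Sp\"ohel and Steger.

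For the $1$-statement, I would prove that if $p \geq Bn^{-1/m_2(H)}$ then w.h.p.\ every $r$-edge-colouring of $G_{n,p}$ contains a monochromatic copy of $H$. My plan is to apply the sparse (Kohayakawa-R\"odl) regularity lemma to $G_{n,p}$; an $r$-edge-colouring induces an $r$-colouring of the reduced graph, which is dense; a Ramsey-theoretic pigeonhole then yields a monochromatic regular configuration in which one colour class has (renormalised) density bounded away from zero; finally, a sparse counting/embedding lemma (the K{\L}R conjecture, now known to hold via hypergraph containers of Balogh-Morris-Samotij and Saxton-Thomason) transfers this structure to a genuine monochromatic copy of $H$ inside $G_{n,p}$.

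The principal obstacle is clearly the $1$-statement: sparse regularity plus a transference lemma for subgraph counts in $G_{n,p}$ is heavy machinery, and in R\"odl and Ruci\'nski's original treatment one has to substitute delicate counting and deletion arguments tailored to $H$ in place of the container tools. In addition, the exclusion of star components, and of the path on $3$ edges when $r = 2$, reflects genuine combinatorial exceptions where a valid colouring persists well past the conjectured threshold; any proof scheme must recognise and exclude these rather than dismiss them as artefacts, and this is what forces the hypothesis of a non-star component (respectively, a non-$P_3$ component for $r=2$) into the statement.
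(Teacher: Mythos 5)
The paper does not prove Theorem~\ref{thm:rr}; it is quoted from R\"{o}dl and Ruci\'{n}ski~\cite{rr2} as background, and the only surrounding material is a discussion of why the hypotheses are necessary together with the usual threshold heuristic. There is therefore no internal proof of the paper's to compare your sketch against. That said, two substantive remarks are in order. Your $0$-statement reduction --- pass to a strictly $2$-balanced subgraph $H^{\ast} \subseteq H$ with $d_2(H^{\ast}) = m_2(H)$ and destroy monochromatic copies of $H^{\ast}$ --- is sufficient but far from necessary, and it is unusable in exactly the regime where the hypotheses of the theorem are most delicate. If $H$ is a tree other than a star (and, for $r = 2$, other than the path with three edges), which the theorem allows, then $m_2(H) = 1$ and the strictly $2$-balanced witness is the path with two edges; forbidding monochromatic two-edge paths amounts to requiring every colour class to be a matching, which fails as soon as some vertex has degree greater than $r$, and such vertices exist w.h.p.\ at $p = \Theta(n^{-1})$. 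So one cannot replace $H$ by $H^{\ast}$ in the $0$-statement. Even away from trees, ``every copy of $H^{\ast}$ has a private edge'' is not literally true w.h.p.; one has to handle local clusters of overlapping copies, and this is the actual content of the R\"{o}dl--Ruci\'{n}ski $0$-statement (and of the algorithmic refinement of Marciniszyn, Skokan, Sp\"ohel and Steger that the present paper builds on). For the $1$-statement, the sparse-regularity-plus-containers route you describe is essentially the Nenadov--Steger short proof~\cite{ns} that the paper also cites, not the original 1995 argument of~\cite{rr2}, which predates both tools; you acknowledge this, and your identification of the $1$-statement as the heavier half is correct, but the sketch as written is a plan for a different (later) proof than the one being attributed to~\cite{rr2}.
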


The statement for $p \leq bn^{-1/m_2(H)}$ in Theorem~\ref{thm:rr} is known as a \emph{$0$-statement} and the statement for $p \geq Bn^{-1/m_2(H)}$ is known as a \emph{$1$-statement}.

The assumption on the structure of $H$ in Theorem~\ref{thm:rr} is necessary. If every component of $H$ is a star then $G_{n,p} \to (H, \ldots, H)$ as soon as sufficiently many vertices of degree $r(\Delta(H) - 1) + 1$ appear in $G_{n,p}$. 
A threshold for this property in $G_{n,p}$ is $p = n^{-1-1/(r(\Delta(H) - 1) + 1)}$, but $m_2(H) = 1$.
For the case when $r = 2$ and at least one component of $H$ is a path on 3 edges while the others are stars, the $0$-statement of Theorem~\ref{thm:rr} becomes false. 
Indeed, one can show that, if $p = cn^{-1/m_2(P_3)} = cn^{-1}$ for some $c > 0$, then the probability that $G_{n,p}$ contains a cycle of length 5 with an edge pending at every vertex is bounded from below by a positive constant $d = d(c)$.
One can check that every colouring of the edges of this augmented $5$-cycle with 2 colours yields a monochromatic path of length 3. 
This special case was missed in \cite{rr2}, and was eventually observed by Friedgut and Krivelevich~\cite{fk}, who corrected the $0$-statement to have the assumption $p = o(n^{-1/m_2(H)})$ instead. 
Note that Nenadov and Steger~\cite{ns} produced a short proof of Theorem~\ref{thm:rr} using the hypergraph container method.

The intuition behind the threshold in Theorem~\ref{thm:rr} is as follows: Firstly, assume $H$ is $2$-balanced. The expected number of copies of a graph $H$ in $G_{n,p}$ is $\Theta(n^{v(H)}p^{e(H)})$ and the expected number of edges is $\Theta(n^2p)$. For $p = n^{-1/m_2(H)}$ (the threshold in Theorem~\ref{thm:rr}), these two expectations are of the same order since $H$ is $2$-balanced. 
That is to say, if the expected number of copies of $H$ at a fixed edge is smaller than some small constant $c$, then we can hope to colour without creating a monochromatic copy of $H$: 
very roughly speaking, each copy will likely contain an edge not belonging to any other copy of $H$, so by colouring these edges with one colour and all other edges with a different colour we avoid creating monochromatic copies of $H$. 
If the expected number of copies of $H$ at a fixed edge is larger than some large constant $C$ then a monochromatic copy of $H$ may appear in any $r$-colouring since the copies of $H$ most likely overlap heavily.

Sharp thresholds for $G_{n,p} \to (H,H)$ have also been obtained when $H$ is a tree \cite{fk}, a triangle \cite{frrt} or, more generally, a strictly $2$-balanced graph that can be made bipartite by the removal of some edge \cite{ss}. 

\begin{thm}[\cite{fk, frrt, ss}]
Suppose that $H$ is either (i) a tree that is not a star or the path of length three or (ii) a strictly $2$-balanced graph with $e(H) \geq 2$ that can be made bipartite by the removal of some edge. 
Then there exist constants $c_0$, $c_1$, and a function $c: \mathbb{N} \to [c_0, c_1]$, such that \[\lim\limits_{n \to \infty} \mathbb{P}[G_{n,p} \to (H, H)] = 
\begin{cases}
    0       & \quad \text{if } p \leq (1 - \varepsilon)c(n)n^{-1/m_2(H)},\\
    1       & \quad \text{if } p \geq (1 + \varepsilon)c(n)n^{-1/m_2(H)}.
\end{cases}\]  for every positive constant $\varepsilon$.
\end{thm}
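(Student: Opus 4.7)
The plan is to apply Friedgut's sharp threshold criterion for monotone graph properties, arguing by contradiction. If the threshold were coarse, then there would exist a constant $\alpha > 0$ and a finite family of ``booster'' graphs $\mathcal{M}$ such that at the critical density $p = cn^{-1/m_2(H)}$ the probability of $G_{n,p} \to (H,H)$ is bounded away from $0$ and $1$, and planting a uniformly random copy of some $M \in \mathcal{M}$ increases this probability by at least $\alpha$. The goal is to derive a contradiction by exhibiting, with positive probability, a good $2$-edge-colouring of $G_{n,p}$ that \emph{survives} the planting of $M$.

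I would first analyse the structure of $H$-copies in $G_{n,p}$ at the critical density. Since $H$ is strictly $2$-balanced (in case (ii)) or a non-trivial tree (in case (i)), a standard second-moment argument shows that the ``copy hypergraph'' of $H$-copies is well-behaved whp: copies are almost edge-disjoint, and the number of copies through any fixed edge is bounded by a slowly growing function. Using this, I would construct a canonical good $2$-colouring as follows. In case (ii), fix an edge $e^{\star}$ of $H$ such that $H - e^{\star}$ is bipartite with a fixed bipartition $(A,B)$; in each copy of $H$ in $G_{n,p}$, designate the image of $e^{\star}$ as red and the remaining edges as blue. Bipartiteness of $H - e^{\star}$ blocks a blue copy of $H$, while the fact that $e^{\star}$ is a single edge blocks a red copy, possibly after local repairs on overlapping copies. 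Case (i) is handled similarly using the bipartition inherited from the tree.

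The main obstacle is handling the planted booster. One must show that for any $M \in \mathcal{M}$ placed at a uniformly random location in $[n]$, the canonical colouring extends (after bounded local modification) to a good colouring of $G_{n,p} \cup M$ with probability bounded away from zero, which already contradicts the hypothesised jump by $\alpha$. This reduces to a deterministic case analysis of how $M$ can attach to, and combine with, existing copies of $H$ in $G_{n,p}$. The strict $2$-balancedness hypothesis gives precise control on the new $H$-copies created by adding $M$: each such copy must use a proper subgraph of $M$ whose $d_2$-value is sub-critical, which forces heavy overlap with the existing structure and limits the possible configurations to a bounded list. The bipartite-after-edge-removal hypothesis (or the tree structure) then provides enough flexibility to recolour a bounded neighbourhood of $M$ without creating a monochromatic $H$. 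Proving this \emph{flexibility lemma} — essentially a structural classification of the conflict configurations and a guarantee that each can be repaired — is the heart of the proof and the step I expect to require the most delicate work.
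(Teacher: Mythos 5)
This theorem is stated in the paper only as a citation of prior work (\cite{fk}, \cite{frrt}, \cite{ss}); the paper contains no proof of it, so there is no in-paper argument to compare against. Your framework — Friedgut's coarse-threshold criterion, contradiction via a planted booster, and a repairable canonical colouring — is indeed the high-level strategy those references use, so you have correctly identified the right tool. But the sketch has a concrete gap where you describe the canonical colouring: the rule ``in each copy of $H$, designate the image of $e^\star$ red and the rest blue'' is not a well-defined edge-colouring of $G_{n,p}$, because a single edge of $G_{n,p}$ typically lies in several copies of $H$ and can be the image of $e^\star$ in one copy while being a non-$e^\star$ edge in another; the two assignments conflict. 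Even after fixing this, the red edges themselves can accumulate into a red copy of $H$, and bipartiteness of $H-e^\star$ by itself does not prevent a blue $H$ (one needs to argue that \emph{every} copy of $H$ has at least one red edge, which your local rule does not guarantee once overlaps are involved). In the actual proofs one builds the colouring globally, via a careful algorithmic or inductive scheme akin to the $0$-statement machinery of R\"odl--Ruci\'nski, not a per-copy local rule.

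Beyond this, the ``flexibility lemma'' you single out as the heart of the proof is correctly identified as such, but it is left entirely as a black box, and that is where essentially all of the work in \cite{fk}, \cite{frrt}, \cite{ss} resides (\cite{frrt} alone is a 66-page memoir devoted largely to this step for $H=K_3$). The classification of booster configurations and the recolouring argument depend delicately on the hypothesis that $H-e^\star$ is bipartite (respectively, on the tree structure in case (i)); asserting that strict $2$-balancedness ``forces heavy overlap'' and that bipartiteness ``provides enough flexibility'' names the right heuristics but does not supply the argument. As a roadmap to the literature your proposal is sound; as a proof it is missing both a correct definition of the base colouring and the entire technical core.
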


\subsection{Asymmetric Ramsey properties: The Kohayakawa-Kreuter Conjecture}

In this paper, we are interested in asymmetric Ramsey properties of $G_{n,p}$, that is, finding a threshold for when $G_{n,p} \to (H_1 \ldots, H_r)$ and $H_1, \ldots, H_r$ are not all the same graph. 
In classical Ramsey theory, the study of asymmetric Ramsey properties sparked off many interesting routes of research (see, e.g. \cite{cg}), including the seminal work of Kim~\cite{kim} on establishing an asymptotically sharp lower bound on the Ramsey number $R(3,t)$. 
In $G_{n,p}$, asymmetric Ramsey properties were first considered by Kohayakawa and Kreuter~\cite{kk}. 
For graphs $H_1$ and $H_2$ with $m_2(H_1) \geq m_2(H_2)$, we define \[d_2(H_1,H_2) := 
    \begin{cases}
        \frac{e(H_1)}{v(H_1) - 2 + \frac{1}{m_2(H_2)}}         & \quad \text{if $H_2$ is non-empty and} \ v(H_1) \geq 2,\\
        0                         & \quad \text{otherwise}
    \end{cases}\] and the \emph{asymmetric $2$-density of the pair $(H_1, H_2)$} to be \[m_2(H_1, H_2) := \max\left\{d_2(J,H_2): J \subseteq H_1\right\}.\]
    
We say that $H_1$ is \emph{balanced with respect to $d_2(\cdot, H_2)$} if we have $d_2(H_1, H_2) = m_2(H_1, H_2)$ and \emph{strictly balanced with respect to $d_2(\cdot, H_2)$} if for all proper subgraphs $J \subsetneq H_1$ we have $d_2(J, H_2) < m_2(H_1, H_2)$. Note that $m_2(H_1) \geq m_2(H_1, H_2) \geq m_2(H_2)$ (see Proposition~\ref{prop:m2h1h2}).

Kohayakawa and Kreuter~\cite{kk} conjectured the following generalisation of Theorem~\ref{thm:rr}. 
(We give here a slight rephrasing of the conjecture: we consider $r$ colours (instead of 2) and add the assumption of Kohayakawa, Schacht and Sp\"{o}hel~\cite{kss} that $H_1$ and $H_2$ are not forests.\footnote{This version of the conjecture is the same as that given in \cite{mns}.})

\begin{conj}[Kohayakawa and Kreuter~\cite{kk}]\label{conj:kk}
Let $r \geq 2$ and suppose that $H_1, \ldots, H_r$ are non-empty graphs such that $m_2(H_1) \geq \cdots \geq m_2(H_r)$ and $m_2(H_2) > 1$.
Then there exist constants $b, B > 0$ such that \[\lim\limits_{n \to \infty} \mathbb{P}[G_{n,p} \to (H_1, \ldots, H_r)] =   
\begin{cases}
    0       & \quad \text{if } p \leq bn^{-1/m_2(H_1,H_2)},\\
    1       & \quad \text{if } p \geq Bn^{-1/m_2(H_1,H_2)}.
\end{cases}\] 
\end{conj}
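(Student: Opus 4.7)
The plan splits into the $1$-statement and the $0$-statement. For the $1$-statement I would invoke the recent theorem of Mousset, Nenadov and Samotij~\cite{mns}, which already establishes the existence of a constant $B$ such that $G_{n,p}\to(H_1,\ldots,H_r)$ w.h.p.\ when $p\ge Bn^{-1/m_2(H_1,H_2)}$, under exactly the hypotheses of the conjecture. The remaining task is therefore purely the $0$-statement: for every admissible sequence $(H_1,\ldots,H_r)$ with $m_2(H_2)>1$, produce a constant $b>0$ and, w.h.p.\ for $p\le bn^{-1/m_2(H_1,H_2)}$, a valid $r$-edge-colouring of $G_{n,p}$ avoiding a monochromatic $H_i$ in colour $i$.

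For the $0$-statement I would follow the deletion/two-round exposure strategy of Marciniszyn, Skokan, Sp\"{o}hel and Steger~\cite{msss}. Expose $G_{n,p}$ as $G_1\cup G_2$ with $G_i\sim G_{n,p/2}$. In $G_1$, identify the collection $\mathcal{H}$ of all copies of $H_2,\ldots,H_r$; a first-moment computation, using $m_2(H_i)\le m_2(H_1,H_2)$ for $i\ge 2$, together with a Janson-type estimate, shows that these copies are sparse and have limited overlap. I would then colour greedily: edges outside $\bigcup\mathcal{H}$ receive colour $1$, and within each copy of some $H_i$ ($i\ge 2$) I pick a single ``breaking'' edge that receives colour $1$ while all remaining edges of that copy receive colour $i$. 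Choosing the breaking edges consistently across overlapping copies is a matching-type subproblem, resolvable by a union bound over conflict patterns once the overlap structure in $\mathcal{H}$ is controlled.

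The crux is to show that the edges ultimately assigned colour $1$ contain no copy of $H_1$, which I would reduce to a deterministic subproblem in the style of this paper. Informally, any colour-$1$ copy $F$ of $H_1$ is either almost edge-disjoint from $\bigcup\mathcal{H}$ (forbidden w.h.p.\ by a first-moment bound, since $d_2(J)\le m_2(H_1,H_2)$ for $J\subseteq H_1$ makes the expected count $o(1)$ when $b$ is small), or each edge of $F$ is the breaking edge of some copy of $H_i$ attached along a short overlap. The deterministic statement I would aim for is: for every $J\subseteq H_1$ attaining $d_2(J,H_2)=m_2(H_1,H_2)$ and every $e\in E(J)$, the minimal ``covering'' configurations of $H_i$'s that could force $e$ to be coloured $1$ have density strictly exceeding $m_2(H_1,H_2)$, so that Markov's inequality produces $o(1)$ such configurations in $G_{n,p}$ and a final union bound over $J$ and $e$ closes the argument.

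The main obstacle is proving this deterministic subproblem in full generality. The troublesome case is when some $H_i$ embeds into $H_1$ in highly constrained ways, so that covering configurations can be structurally forced to be sparse even when a naive density count suggests otherwise; in such cases the density bound needed for the first moment may fail for combinatorial rather than numerical reasons. A uniform treatment valid for every admissible $(H_1,\ldots,H_r)$ would require either a finer graph invariant than $d_2(\cdot,H_2)$ or a complete structural classification of the critical subgraphs of $H_1$ relative to $H_2$. Resolving this subproblem in full is precisely the gap separating the present paper's reduction from an unconditional proof of the conjecture, and I expect the main effort of any such proof to be concentrated there.
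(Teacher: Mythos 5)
You should first note that the statement you were asked to prove is a \emph{conjecture}: the paper does not prove it either, and it remains open. What the paper actually does is (a) observe that the $1$-statement is the theorem of Mousset, Nenadov and Samotij, and (b) reduce the $0$-statement to a single deterministic subproblem (Conjecture~\ref{conj:aconj}: the family $\hat{\mathcal{A}}(H_1,H_2,\varepsilon)$ of $2$-connected graphs in $\mathcal{C}^*$ resp.\ $\mathcal{C}$ with $m(A)\le m_2(H_1,H_2)+\varepsilon$ is finite and each member admits a valid edge-colouring), which it then verifies only for almost all pairs of regular graphs. Your proposal has the same architecture at the top level --- cite \cite{mns} for the $1$-statement, reduce the $0$-statement to two colours and then to a deterministic density statement about ``covering configurations'' --- and you honestly flag that the deterministic core is unresolved. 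That acknowledged gap is exactly the paper's Conjecture~\ref{conj:aconj}, so your assessment of where the difficulty lies is accurate.

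Where your sketch diverges from the paper, and where it has soft spots of its own: the paper's $0$-statement machinery is not a two-round exposure plus greedy colouring but the algorithmic deletion method of \cite{msss} --- \textsc{Asym-Edge-Col} repeatedly strips edges that are not the unique edge-intersection of an $H_1$-copy and an $H_2$-copy, and if it gets stuck, an auxiliary procedure \textsc{Grow} builds a witness subgraph $F$ whose expected count is $o(1)$ via the invariants $\lambda(F_{i+1})=\lambda(F_i)$ for non-degenerate steps and $\lambda(F_{i+1})\le\lambda(F_i)-\kappa$ for degenerate ones. Your greedy scheme also inverts the usual colour roles: the standard reduction colours with colour $1$ (the $H_1$-avoiding colour) the edges \emph{not lying in any copy of $H_1$}, whereas you assign colour $1$ to edges outside copies of $H_2,\ldots,H_r$ plus all breaking edges, and then must rule out a colour-$1$ copy of $H_1$. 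Your claim that such a copy ``almost edge-disjoint from $\bigcup\mathcal{H}$'' is killed by a first-moment bound is not correct as stated: at $p=\Theta(n^{-1/m_2(H_1,H_2)})$ the expected number of copies of $H_1$ is $\Theta(n^{2-1/m_2(H_2)})=\omega(1)$, so copies of $H_1$ abound, and excluding those avoiding $\bigcup\mathcal{H}$ requires a genuine second-moment or structural argument, not $d_2(J)\le m_2(H_1,H_2)$ (which in fact fails: $m_2(H_1)\ge m_2(H_1,H_2)$ by Proposition~\ref{prop:m2h1h2}). Finally, handling $r$ colours and overlapping copies of distinct $H_i$'s via a ``matching-type subproblem'' is avoidable: as the paper notes, one may simply ignore $H_3,\ldots,H_r$ and seek a red/blue colouring with no red $H_1$ and no blue $H_2$. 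In short, your plan correctly locates the open deterministic core but does not constitute a proof, and its probabilistic scaffolding would need to be replaced by (or repaired into) something like the paper's \textsc{Grow}-based first-moment analysis to make even the conditional reduction rigorous.
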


Observe that we would always need $m_2(H_2) \geq 1$ as an assumption, otherwise $m_2(H_2) = 1/2$ (that is, $H_2$ is the union of a matching and some isolated vertices) and we would have that $m_2(H_1, H_2) = e_J/v_J$ for some non-empty subgraph $J \subseteq H_1$.
For any constant $B > 0$, the probability that $G_{n,p}$ with $p = Bn^{-1/m_2(H_1, H_2)}$ contains no copy of $H_1$ exceeds a positive constant $C = C(B)$; see, e.g. \cite{jlr}. 
We include the assumption of Kohayakawa, Schacht and Sp\"{o}hel~\cite{kss}, that $m_2(H_2) > 1$, to avoid possible complications arising from $H_2$ (and/or $H_1$) being certain forests, such as those excluded in the statement of Theorem~\ref{thm:rr}.

The intuition behind the threshold in Conjecture~\ref{conj:kk} is most readily explained in the case of $r = 3$, $H_2 = H_3$ and when $m_2(H_1) > m_2(H_1, H_2)$. (The following explanation is adapted from \cite{gnpsst}.)
Firstly, observe that we can assign colour 1 to every edge that does not lie in a copy of $H_1$. 
Since $m_2(H_1) > m_2(H_1, H_2)$, we expect that the copies of $H_1$ in $G_{n,p}$ with $p = \Theta(n^{-1/m_2(H_1,H_2)})$ do not overlap much (by similar reasoning as in the intuition for the threshold in Theorem~\ref{thm:rr}). 
Hence the number of edges left to be coloured is of the same order as the number of copies of $H_1$, which is $\Theta(n^{v(H_1)}p^{e(H_1)})$. 
If we further assume that these edges are randomly distributed (which is not correct, but gives good intuition) then we get a random graph $G^*$ with edge probability $p^* =  \Theta(n^{v(H_1) - 2}p^{e(H_1)})$. 
Now we colour $G^*$ with colours 2 and 3, and apply the intuition from the symmetric case (as $H_2 = H_3$): if the copies of $H_2$ are heavily overlapping then we cannot hope to colour without getting a monochromatic copy of $H_2$, but if not then we should be able to colour.
As observed before, a threshold for this property is $p^* = n^{-1/m_2(H_2)}$. Solving $n^{v(H_1) - 2}p^{e(H_1)} = n^{-1/m_2(H_2)}$ for $p$ then yields $p = n^{-1/m_2(H_1,H_2)}$, the conjectured threshold. 


\smallskip
After earlier work (see e.g. \cite{gnpsst, hst, kk, kss, msss}), the $1$-statement of Conjecture~\ref{conj:kk} was proven by Mousset, Nenadov and Samotij~\cite{mns}.

\subsection{The $0$-statement of the Kohayakawa-Kreuter Conjecture}\label{sec:0statement}

In this paper, we are interested in the $0$-statement of Conjecture~\ref{conj:kk}, 
which has so far only been proven when $H_1$ and $H_2$ are both cycles \cite{kk}, both cliques \cite{msss} and, recently, when $H_1$ is a clique and $H_2$ is a cycle \cite{lmms}. We also note that the authors of \cite{gnpsst} prove, under certain balancedness conditions, the $0$-statement of a generalised version of Conjecture~\ref{conj:kk} which allows $H_1, \ldots, H_r$ to be uniform hypergraphs.

\begin{define}
\textnormal{Let $H_1$ and $H_2$ be non-empty graphs.
We say that a graph $G$ has a \emph{valid edge-colouring for $H_1$ and $H_2$} if there exists a red/blue colouring of the edges of $G$ that does not produce a red copy of $H_1$ or a blue copy of $H_2$.}
\end{define}

To prove the $0$-statement of Conjecture~\ref{conj:kk}, one only needs to show that $G = G_{n,p}$ has a valid edge-colouring for $H_1$ and $H_2$ asymptotically almost surely (a.a.s.)~(that is, we ignore $H_3, \ldots, H_r$ and colours $3, \ldots, r$). 
Further, when $m_2(H_1) > m_2(H_2)$ we can assume when trying to prove the $0$-statement of Conjecture~\ref{conj:kk} that $H_2$ is strictly $2$-balanced and $H_1$ is strictly balanced with respect to $d_2(\cdot, H_2)$.
Indeed, if either of these assumptions do not hold then one can replace $H_1$ and $H_2$ with subgraphs $H_1' \subseteq H_1$ and $H_2' \subseteq H_2$ such that $H_2'$ is strictly $2$-balanced and $H_1'$ is strictly balanced with respect to $d_2(\cdot, H_2')$.
Then we would aim to show that $G$ has a valid edge-colouring for $H_1'$ and $H_2'$ a.a.s.\footnote{Which would immediately imply that $G$ has a valid edge-colouring for $H_1$ and $H_2$.}~Similarly, when $m_2(H_1) = m_2(H_2)$, we can assume when proving the $0$-statement of Conjecture~\ref{conj:kk} that both $H_1$ and $H_2$ are strictly $2$-balanced.  

In past work on attacking $0$-statements of Ramsey problems (e.g.~Conjecture~\ref{conj:kk} and Theorem~\ref{thm:rr}), researchers have applied variants of a standard and natural approach (see e.g. \cite{kk, lmms, msss, rr1}). 
The main contribution of this paper is to prove that every step of this approach, except one, holds with respect to Conjecture~\ref{conj:kk}. 
That is, we reduce Conjecture~\ref{conj:kk} to a single subproblem. To state this subproblem we require a number of definitions adapted from \cite{msss}.

\begin{define}\label{def:c*}
\textnormal{For any graph $G$ we define the families \[\mathcal{R}_G := \{R \subseteq G : R \cong H_1\} \ \mbox{and}\ \mathcal{L}_G := \{L \subseteq G : L \cong H_2\}\] of all copies of $H_1$ and $H_2$ in $G$, respectively. 
Furthermore, we define \[\mathcal{L}^*_G := \{L \in \mathcal{L}_G : \forall e \in E(L) \ \exists R \in \mathcal{R}_G \ \mbox{s.t.} \ E(L) \cap E(R) = \{e\}\} \subseteq \mathcal{L}_G,\] the family of copies $L$ of $H_2$ in $G$ with the property that for every edge $e$ in $L$ there exists a copy $R$ of $H_1$ such that the edge sets of $L$ and $R$ intersect uniquely at $e$; 
\[\mathcal{C} = \mathcal{C}(H_1, H_2) := \{G = (V,E) : \forall e \in E\ \exists(L,R) \in \mathcal{L}_G \times \mathcal{R}_G\ \mbox{s.t.}\ E(L) \cap E(R) = \{e\}\},\] the family of graphs $G$ where every edge is the unique edge-intersection of some copy $L$ of $H_2$ and some copy $R$ of $H_1$;
and 
\[\mathcal{C}^* = \mathcal{C}^*(H_1, H_2) := \{G = (V,E): \forall e \in E \ \exists L \in \mathcal{L}^*_{G} \ \mbox{s.t.} \ e \in E(L)\},\] the family of graphs $G$ where every edge is contained in a copy $L$ of $H_2$ which has at each edge $e$ some copy $R$ of $H_1$ attached such that $E(L) \cap E(R) = \{e\}$.}
\end{define}

Note that $\mathcal{C}^*(H_1, H_2) \subseteq \mathcal{C}(H_1, H_2)$. Also, it is important to note that for $L \in \mathcal{L}_G^*$, the vertex sets of copies of $H_1$ appended at edges of $L$ may overlap with each other and/or intersect with more than $2$ vertices of $L$ (that is, more than the $2$ vertices of the edge of $L$ they are appended at).

Let us now discuss the relevance of these sets of graphs to proving the $0$-statement of Conjecture~\ref{conj:kk}. 
Recall that we aim to show that $G = G_{n,p}$ has a valid edge-colouring for $H_1$ and $H_2$. 
Now, certain obstacles relating to $\mathcal{C}$, $\mathcal{L}^*_G$ and $\mathcal{C}^*$ may appear while we are constructing such a valid edge-colouring. 
For instance, say there exists a subgraph $G' \subseteq G$ such that $G' \in \mathcal{C}$. 
Then each edge $e \in E(G')$ has a copy $R$ of $H_1$ and a copy $L$ of $H_2$ in $G$ that uniquely edge-intersect in that edge. 
Say during the construction of our colouring we come to a point where every edge of $E(R)\setminus \{e\}$ is coloured red,
every edge of $E(L)\setminus \{e\}$ is coloured blue and $e$ is yet to be coloured (see Figure~\ref{fig:cbad}). Then however we colour the edge $e$, we have a red copy of $H_1$ or a blue copy of $H_2$. 
Hence one must be careful when constructing a valid edge-colouring in graphs from $\mathcal{C}$. 

\begin{figure}[!ht]
\begin{center}
\begin{tikzpicture}[line cap=round,line join=round,>=triangle 45,x=1cm,y=1cm]
\draw [line width=2.5pt] (4,6)-- (4,4);
\draw[color=black] (4.4,5.035) node {\LARGE $e$};
\draw [line width=2pt,color=blue] (4,4)-- (6,4);
\draw [line width=2pt,color=blue] (6,4)-- (6,6);
\draw [line width=2pt,color=blue] (6,6)-- (4,6);
\draw[color=blue] (6.55,5.035) node {\LARGE $H_2$};
\draw [line width=2pt,color=red] (2,6)-- (4,4);
\draw [line width=2pt,color=red] (2,4)-- (4,6);
\draw [line width=2pt,color=red] (2,6)-- (4,6);
\draw [line width=2pt,color=red] (2,6)-- (2,4);
\draw [line width=2pt,color=red] (2,4)-- (4,4);
\draw[color=red] (1.45,5.035) node {\LARGE $H_1$};
\begin{scriptsize}
\draw [fill=black] (4,6) circle (2.5pt);
\draw [fill=black] (4,4) circle (2.5pt);
\draw [fill=black] (6,4) circle (2.5pt);
\draw [fill=black] (6,6) circle (2.5pt);
\draw [fill=black] (2,6) circle (2.5pt);
\draw [fill=black] (2,4) circle (2.5pt);
\end{scriptsize}
\end{tikzpicture}
\vspace{0.4cm}
\caption{A copy of $H_1$ intersecting a copy of $H_2$ at an edge $e$ where $H_1 = K_4$, $H_2 = C_4$ and $e$ is yet to be coloured.}\label{fig:cbad}
\end{center}
\end{figure} 

Now say there exists a subgraph $G' \subseteq G$ such that $G' \in \mathcal{C}^*$. 
Then each edge $e \in E(G')$ is contained in some copy $L$ of $H_2$ such that $L \in \mathcal{L}^*_{G'}$, that is, $L$ has a different copy $R_{e'}$ of $H_1$ appended at each edge $e' \in E(L)$ such that $E(L) \cap E(R_{e'}) = \{e\}$. 
Similarly as before, say during the construction of our colouring we come to a point where every edge of $E(R_{e'})\setminus \{e'\}$ is red (see Figure~\ref{fig:c*bad}).

\begin{figure}[!ht]
\begin{center}
\begin{tikzpicture}[line cap=round,line join=round,>=triangle 45,x=1cm,y=1cm]
\draw [line width=2.5pt,color=black] (4,6)-- (4,4);
\draw [line width=2.5pt,color=black] (4,4)-- (6,4);
\draw [line width=2.5pt,color=black] (6,4)-- (6,6);
\draw [line width=2.5pt,color=black] (6,6)-- (4,6);
\draw[color=black] (4.95,5.05) node {\LARGE $L$};
\draw [line width=2pt,color=red] (8,6)-- (8,4);
\draw [line width=2pt,color=red] (2,6)-- (4,4);
\draw [line width=2pt,color=red] (2,4)-- (4,6);
\draw [line width=2pt,color=red] (4,6)-- (6,8);
\draw [line width=2pt,color=red] (4,8)-- (6,6);
\draw [line width=2pt,color=red] (4,4)-- (6,2);
\draw [line width=2pt,color=red] (4,2)-- (6,4);
\draw [line width=2pt,color=red] (6,6)-- (8,4);
\draw [line width=2pt,color=red] (6,4)-- (8,6);
\draw [line width=2pt,color=red] (4,6)-- (4,8);
\draw [line width=2pt,color=red] (4,8)-- (6,8);
\draw [line width=2pt,color=red] (6,8)-- (6,6);
\draw [line width=2pt,color=red] (2,6)-- (4,6);
\draw [line width=2pt,color=red] (2,6)-- (2,4);
\draw [line width=2pt,color=red] (2,4)-- (4,4);
\draw [line width=2pt,color=red] (4,4)-- (4,2);
\draw [line width=2pt,color=red] (4,2)-- (6,2);
\draw [line width=2pt,color=red] (6,2)-- (6,4);
\draw [line width=2pt,color=red] (6,4)-- (8,4);
\draw [line width=2pt,color=red] (6,6)-- (8,6);
\begin{scriptsize}
\draw [fill=black] (4,6) circle (2.5pt);
\draw [fill=black] (4,4) circle (2.5pt);
\draw [fill=black] (6,4) circle (2.5pt);
\draw [fill=black] (6,6) circle (2.5pt);
\draw [fill=black] (2,6) circle (2.5pt);
\draw [fill=black] (2,4) circle (2.5pt);
\draw [fill=black] (4,8) circle (2.5pt);
\draw [fill=black] (6,8) circle (2.5pt);
\draw [fill=black] (4,2) circle (2.5pt);
\draw [fill=black] (6,2) circle (2.5pt);
\draw [fill=black] (8,4) circle (2.5pt);
\draw [fill=black] (8,6) circle (2.5pt);
\end{scriptsize}
\end{tikzpicture}
\vspace{0.4cm}
\caption{A copy $L$ of $H_2$ such that $L \in \mathcal{L}^*_{G'}$ where $H_1 = K_4$, $H_2 = C_4$ and the edges of $L$ are yet to be coloured.}\label{fig:c*bad}
\end{center}
\end{figure}

However we colour the edges of $L$, we will produce a red copy of $H_1$ or a blue copy of $H_2$. 
Observe that the structure in Figure~\ref{fig:c*bad} is essentially a generalisation of the structure in Figure~\ref{fig:cbad}.
Indeed, $\mathcal{C}^* \subseteq \mathcal{C}$.
Hence graphs in $\mathcal{C}^*$ are also possibly difficult to construct a valid edge-colouring in.

For the next section we need also the following definition. For a graph $H$, we define $d(H) := e_H/v_H$ if $v(H) \geq 1$ and $d(H) := 0$ otherwise. Also, define $m(H) := \max\{d(J): J \subseteq H\}$.

\subsection{The family of graphs $\hat{\mathcal{A}}(H_1, H_2, \varepsilon)$}\label{sec:familya} Throughout the rest of this section, assume that $n$ is sufficiently large and for some constant $b > 0$ that $p = bn^{-1/m_2(H_1,H_2)}$. We now define a very important family of graphs which we will need to state our reduction of Conjecture~\ref{conj:kk}. These graphs present potentially significant obstacles to constructing a valid-edge colouring in $G_{n,p}$ a.a.s. After defining this family of graphs we will explain the thinking behind each part of the definition.

\begin{define}\label{def:a*}
\textnormal{Let $H_1$ and $H_2$ be non-empty graphs such that $m_2(H_1) \geq m_2(H_2) > 1$. Let $\varepsilon := \varepsilon(H_1, H_2) > 0$ be a constant. Define $\hat{\mathcal{A}} = \hat{\mathcal{A}}(H_1,H_2, \varepsilon)$ to be \begin{small}\[\hat{\mathcal{A}} := \begin{cases}
    \{A \in \mathcal{C}^*(H_1, H_2): m(A) \leq m_2(H_1, H_2) + \varepsilon \land A \ \mbox{is $2$-connected}\} \ \mbox{if} \ m_2(H_1) > m_2(H_2),\\
    \{A \in \mathcal{C}(H_1, H_2): m(A) \leq m_2(H_1, H_2) + \varepsilon \land A \ \mbox{is $2$-connected}\} \ \mbox{if} \ m_2(H_1) = m_2(H_2).
\end{cases}\]\end{small}}
\end{define}

\subsubsection{Why do we have $m(A) \leq m_2(H_1, H_2) + \varepsilon$?} We require the following definition: For any graph $F$, let \[\lambda(F) := v(F) - \frac{e(F)}{m_2(H_1,H_2)}.\] 
The definition of $\lambda(F)$ is motivated by the fact that the expected number of copies of $F$ in $G_{n,p}$ has order of magnitude \[n^{v(F)}p^{e(F)} = b^{e(F)}n^{\lambda(F)}.\] 
Let $A \in \hat{\mathcal{A}}(H_1, H_2, \varepsilon)$ for some pair of graphs $H_1$ and $H_2$ with $m_2(H_1) \geq m_2(H_2) > 1$ and a constant $\varepsilon := \varepsilon(H_1, H_2) > 0$.
Now, assuming $m(A) = d(A)\ (= |E(A)|/|V(A)|)$, observe that the following inequalities are equivalent.
\begin{eqnarray*}
    m(A) & < & m_2(H_1, H_2) \\
    \frac{|E(A)|}{m_2(H_1, H_2)} & < & |V(A)| \\
     0 & < &  \lambda(A).
\end{eqnarray*}
Thus, if $m(A) < m_2(H_1, H_2)$, then the expected number of copies of $A$ in $G_{n,p}$ has order of magnitude $\omega(1)$.
Thus such graphs $A$ can be expected to be found in $G_{n,p}$ a.a.s.

Now consider if instead we had $m(A) > m_2(H_1, H_2) + \varepsilon$. Then the following inequalities are equivalent.
\begin{eqnarray*}
    m(A) & > & m_2(H_1, H_2) + \varepsilon \\
    \lambda(A) & < & -\frac{|V(A)|\varepsilon}{m_2(H_1, H_2)}.
\end{eqnarray*}
Thus the expected number of copies of $A$ in $G_{n,p}$ has order of magnitude $o(1)$,
that is, there are no copies of $A$ in $G_{n,p}$ a.a.s. 

Hence graphs $A \in \hat{\mathcal{A}}$ have sufficiently low density to plausibly exist in $G_{n,p}$.
Note that we stipulate $m(A) \leq m_2(H_1, H_2) + \varepsilon$ rather than $m(A) < m_2(H_1, H_2)$ so that later we can argue\footnote{For instance, in the proof of Claim~\ref{claim:grow}.} that $2$-connected graphs $A \in C^*(H_1, H_2)$ that do not belong to $\hat{\mathcal{A}}$ have $m(A) > m_2(H_1, H_2) + \varepsilon$,
and so, as just noted, do not appear in $G_{n,p}$ a.a.s.

\subsubsection{Why is $A \in \mathcal{C}$ or $\mathcal{C}^*$?}\label{sec:whyaincorc^*}

As discussed at the end of Section~\ref{sec:0statement}, graphs in $\mathcal{C}$ and $\mathcal{C}^*$ contain certain structures which could be obstacles to constructing a valid edge-colouring. 

\subsubsection{Why is whether $A \in C^*$ or $A \in C$ dependent on $m_2(H_1)$ and $m_2(H_2)$?}\label{sec:ainc*orinc} Now let us consider why in the definition of $\hat{\mathcal{A}}$ we have that $A \in \mathcal{C}^*(H_1, H_2)$ when $m_2(H_1) > m_2(H_2)$ but $A \in \mathcal{C}(H_1, H_2)$ when $m_2(H_1) = m_2(H_2)$. 
In short, the defining structure of graphs in $\mathcal{C}^*(H_1, H_2)$ - that of a copy of $H_2$ with appended copies of $H_1$ at its edges (see Figure~\ref{fig:c*bad} for example) - 
is `meaningful' when $m_2(H_1) > m_2(H_2)$ but not when $m_2(H_1) = m_2(H_2)$. The following result will aid us in elaborating on this remark. It illuminates the relationship between the one and two argument $m_2$ measures and can be readily proven using elementary arguments.

\begin{prop}\label{prop:m2h1h2}
Suppose that $H_1$ and $H_2$ are non-empty graphs with $m_2(H_1) \geq m_2(H_2)$. 
Then we have \[m_2(H_1) \geq m_2(H_1, H_2) \geq m_2(H_2).\]
Moreover, \[m_2(H_1) > m_2(H_1, H_2) > m_2(H_2) \ \mbox{whenever} \ m_2(H_1) > m_2(H_2).\]
\end{prop}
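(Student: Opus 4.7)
The plan is to prove both inequalities directly from the definitions, using the fact that for any subgraph $J \subseteq H_1$ with $v(J) \geq 3$ and $e(J) \geq 1$, the inequality $d_2(J) \leq m_2(H_1)$ yields $e(J) \leq (v(J)-2)m_2(H_1) + 1$, and inserting this bound into the formula for $d_2(J, H_2)$ turns everything into a one-line algebraic comparison.

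For the upper bound $m_2(H_1) \geq m_2(H_1, H_2)$, I would fix any $J \subseteq H_1$ and show $d_2(J, H_2) \leq m_2(H_1)$. The cases $v(J) \leq 2$ or $e(J) = 0$ are trivial (one checks that $d_2(J, H_2)$ is either $0$ or $m_2(H_2)$, both $\leq m_2(H_1)$). For $v(J) \geq 3$ and $e(J) \geq 1$, substituting $e(J) \leq (v(J)-2)m_2(H_1) + 1$ into the numerator of $d_2(J,H_2)$ reduces the desired inequality to $(v(J)-2)m_2(H_1) + 1 \leq m_2(H_1)\bigl(v(J) - 2 + 1/m_2(H_2)\bigr)$, which simplifies to $m_2(H_2) \leq m_2(H_1)$. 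For the lower bound $m_2(H_1, H_2) \geq m_2(H_2)$, I would simply take $J$ to be a single edge of $H_1$ (which exists since $H_1$ is non-empty), giving $d_2(J, H_2) = 1/(0 + 1/m_2(H_2)) = m_2(H_2)$, so the maximum over all $J \subseteq H_1$ is at least $m_2(H_2)$.

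For the strict part, assume $m_2(H_1) > m_2(H_2)$. Strictness of the upper bound follows because in the computation above every inequality becomes strict: $m_2(H_2) < m_2(H_1)$ makes the final comparison strict whenever $v(J) \geq 3$ and $e(J) \geq 1$, and the remaining cases give $d_2(J, H_2) \in \{0, m_2(H_2)\} < m_2(H_1)$. For strictness of the lower bound, I would pick $J^* \subseteq H_1$ with $d_2(J^*) = m_2(H_1)$; since $m_2(H_1) > m_2(H_2) \geq 1/2$, this $J^*$ must have $v(J^*) \geq 3$, and the identity $e(J^*) = (v(J^*)-2)m_2(H_1) + 1$ together with $m_2(H_1) > m_2(H_2)$ gives $e(J^*) > (v(J^*)-2)m_2(H_2) + 1$, whence $d_2(J^*, H_2) > m_2(H_2)$ by the same algebraic manipulation in reverse.

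There is no real obstacle here; the proposition is a purely algebraic exercise built on the two defining formulas, and the only mild care needed is in handling the degenerate subgraphs of $H_1$ (isolated vertices, single edges) so that the denominator $v(J) - 2 + 1/m_2(H_2)$ in $d_2(J, H_2)$ is interpreted correctly. The whole argument should fit in under a page.
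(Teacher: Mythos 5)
Your argument is correct in all its cases, and I have verified each reduction step: substituting $e(J)\le (v(J)-2)m_2(H_1)+1$ into $d_2(J,H_2)$ does indeed collapse the upper bound to $m_2(H_2)\le m_2(H_1)$, the $K_2$ subgraph gives the lower bound exactly, and the strict version goes through because $m_2(H_1)>m_2(H_2)\ge 1/2$ forces the $d_2$-maximising subgraph $J^*\subseteq H_1$ to have $v(J^*)\ge 3$, where the strict inequality propagates. Note that the paper itself states Proposition~\ref{prop:m2h1h2} without a written proof, remarking only that it ``can be readily proven using elementary arguments,'' so there is no authorial argument to compare against; your write-up is exactly the kind of short algebraic verification the paper is implicitly deferring to the reader, and it is complete and correct as stated.
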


Recall from earlier that we can assume when proving the $0$-statement of Conjecture~\ref{conj:kk} that $m_2(H_1) \geq m_2(H_2) \geq 1$, $H_2$ is strictly $2$-balanced, and $H_1$ is strictly balanced with respect to $d_2(\cdot, H_2)$ if $m_2(H_1) > m_2(H_2)$ and strictly $2$-balanced if $m_2(H_1) = m_2(H_2)$. 
Assuming these density conditions, one can view the values of $m_2(H_1)$,
$m_2(H_1, H_2)$ and $m_2(H_2)$ in a particular way which will be relevant across this paper.
Take a copy of $H_1$ and attach a copy of $H_2$ to get the same structure as in Figure~\ref{fig:cbad}.
Then, if we take the number of edges we added to the copy of $H_1$ and divide it by the number of vertices we added, this is precisely $$\frac{|E(H_2)| - 1}{|V(H_2)| - 2} = m_2(H_2).$$ 
Similarly, 
if one takes a copy of $H_1$ and attaches a copy of $H_2$ with $|E(H_2)| - 1$ appended copies of $H_1$ at each of its edges precisely as in Figure~\ref{fig:c*bad}, 
then the number of edges added to the copy of $H_1$ over the number of vertices added is \begin{eqnarray*}
  \frac{|E(H_1)|(|E(H_2)| - 1)}{(|V(H_1)| - 2)(|E(H_2)| - 1) + (|V(H_2)| - 2)} & = & \frac{|E(H_1)|}{|V(H_1)| - 2 + \frac{|V(H_2)| - 2}{|E(H_2)| - 1}}
  \\ & = & \frac{|E(H_1)|}{|V(H_1)| - 2 + \frac{1}{m_2(H_2)}} =  m_2(H_1, H_2),
\end{eqnarray*}
where in the $m_2(H_1) = m_2(H_2)$ case the final equality follows from $H_1$ being strictly $2$-balanced and Proposition~\ref{prop:m2h1h2}.\footnote{That is, $H_1$ is balanced with respect to $d_2(\cdot, H_2)$ when $m_2(H_1) = m_2(H_2)$ and $H_1$ is strictly $2$-balanced.}
Also, if we attach a copy of $H_1$ to a copy of $H_1$ or $H_2$ in a similar manner to the structure in Figure~\ref{fig:cbad} (at a single edge with no additional vertices or edges overlapping) and $H_1$ is strictly $2$-balanced, 
then the number of edges added over the number of vertices added is $m_2(H_1)$. For brevity, let $F_1$ be the graph in Figure~\ref{fig:cbad} and $F_2$ be the graph in Figure~\ref{fig:c*bad}, ignoring the colouring of the edges.
If $m_2(H_1) = m_2(H_2)$, then $m_2(H_1) = m_2(H_1, H_2) = m_2(H_2)$ by Proposition~\ref{prop:m2h1h2} and one can calculate that $\lambda(H_1) = \lambda(H_2) = \lambda(F_1) = \lambda(F_2)$, that is, the expected numbers of these graphs in $G_{n,p}$ are approximately the same; they have the same orders of magnitude. Moreover,
if we appended less than $|E(H_2)| - 1$ copies of $H_1$ to the copy of $H_2$\footnote{In the same manner as $F_2$ - attaching the copies at single edges and not overlapping additionally with any other vertices or edges of the copy of $H_2$ or any of the other copies of $H_1$.} - call such a graph $F_2'$ -  then we would still have $\lambda(F_2') = \lambda(H_1)$. 
However, when $m_2(H_1) > m_2(H_2)$ we have $m_2(H_1) > m_2(H_1, H_2) > m_2(H_2)$ by Proposition~\ref{prop:m2h1h2}, and one can calculate that $\lambda(H_1) = \lambda(F_2)$, but $\lambda(H_1) < \lambda(F_1)$. Thus, speaking broadly, $F_1$ is more likely to appear in $G_{n,p}$ than $H_1$. In fact, $\lambda(H_1) < \lambda(F_2')$, irrelevant of the position and number of the appended copies of $H_1$ in $F_2'$.

Thus when $m_2(H_1) = m_2(H_2)$, $A_2$ is not a particularly meaningful construction, but when $m_2(H_1) > m_2(H_2)$ we see that $A_2$ makes more sense to consider. One can observe that this accords with Proposition~\ref{prop:m2h1h2}, in that either $m_2(H_1) = m_2(H_1, H_2) = m_2(H_2)$, and so $m_2(H_1, H_2)$ does not have a distinct value, or $m_2(H_1) > m_2(H_1, H_2) > m_2(H_2)$ and $m_2(H_1, H_2)$ does have a distinct value.
See Section~\ref{sec:concludingremarks} for additional discussion on why for $A \in \hat{\mathcal{A}}$ we take $A \in \mathcal{C}$ when $m_2(H_1) = m_2(H_2)$.

\subsubsection{Why is $A$ a $2$-connected graph?} 
Let $G = G_{n,p}$ and consider the collection of $\boldsymbol{F}_2(G)$ of maximally $2$-connected subgraphs $A$ of $G$.\footnote{That is, if $e \in A$ for some graph $A$ in $\boldsymbol{F}_2(G)$, then there does not exist some $A' \subseteq G$ such that $A \subset A'$ and $A'$ is $2$-connected.} Thus $E(G)$ can be partitioned into $\boldsymbol{F}_2(G)$ and a forest $\boldsymbol{F}_1(G)$.
Later, we will prove that the density conditions we can assume for $H_1$ and $H_2$ when proving the $0$-statement of Conjecture~\ref{conj:kk} imply that $H_1$ and $H_2$ are $2$-connected (see Lemma~\ref{lemma:2connected}). 
Thus, assuming these density conditions, no copy of $H_1$ or $H_2$ has edges that lie in two different graphs in $\boldsymbol{F}_2(G)$. Importantly, this means that if each graph in $\boldsymbol{F}_2(G)$ has a valid edge-colouring for $H_1$ and $H_2$, then there exists a valid edge-colouring covering every graph in $\boldsymbol{F}_2(G)$. Further, the edges of the forest $\boldsymbol{F}_1(G)$ belong to no copies of $H_1$ and $H_2$ in $G$, thus we can colour them any way we want.
So with regard to finding a valid edge-colouring of $G$, we can reduce to looking at $2$-connected graphs. Hence we reduce to looking at $2$-connected graphs for $\hat{\mathcal{A}}$.

\subsection{Reduction of Conjecture~\ref{conj:kk}} We now state the subproblem we reduce Conjecture~\ref{conj:kk} to as the following conjecture.

\begin{conj}\label{conj:aconj}
Let $H_1$ and $H_2$ be non-empty graphs such that $H_1 \neq H_2$ and $m_2(H_1) \geq m_2(H_2)$. Assume $H_2$ is strictly $2$-balanced. Moreover, assume $H_1$ is strictly balanced with respect to $d_2(\cdot, H_2)$ if $m_2(H_1) > m_2(H_2)$ and strictly $2$-balanced if $m_2(H_1) = m_2(H_2)$.
Then there exists a constant $\varepsilon := \varepsilon(H_1, H_2) > 0$ such that the set $\hat{\mathcal{A}}$ is finite and every graph in $\hat{\mathcal{A}}$ has a valid edge-colouring for $H_1$ and $H_2$.
\end{conj}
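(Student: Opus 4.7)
The plan is to establish the two parts of the conjecture separately: first, that $\hat{\mathcal{A}}$ is finite for some suitable choice of $\varepsilon > 0$; and second, that every member of $\hat{\mathcal{A}}$ admits a valid red/blue edge-colouring for $(H_1, H_2)$. For the finiteness, I would argue by contradiction, fixing $\varepsilon$ small and supposing an arbitrarily large $A \in \hat{\mathcal{A}}$ exists. The approach is an extension-type density argument: build up $A$ iteratively by starting from a single cherry (as in Figure~\ref{fig:cbad} or Figure~\ref{fig:c*bad}) and, at each step, adding a minimal witness of the $\mathcal{C}$-property (or $\mathcal{C}^*$-property) covering some as-yet-uncovered boundary edge. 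The density bookkeeping, anticipated by the discussion in Section~\ref{sec:ainc*orinc}, shows that under the strict balancedness assumptions on $H_1$ and $H_2$, any \emph{generic} extension (attaching a copy of $H_1$ or $H_2$ at a single edge with no further overlap) contributes new edges and new vertices in ratio exactly $m_2(H_1, H_2)$, while any extension that shares additional vertices strictly increases the local density. Two-connectedness of $A$ eventually forces a non-generic attachment, since a chain of generic extensions builds up a tree-like superstructure that must be `closed up' to restore $2$-connectedness, at which point the density of the current subgraph is pushed above the cap $m_2(H_1, H_2) + \varepsilon$, contradicting $A \in \hat{\mathcal{A}}$.

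For the valid edge-colouring, I would proceed by strong induction on $|E(A)|$ within a class broader than $\hat{\mathcal{A}}$ itself, since removing an edge may destroy the $\mathcal{C}^*$-structure or $2$-connectedness. The inductive step selects an edge $e \in E(A)$ so that the $2$-connected blocks of $A - e$ are each smaller and retain the necessary density and structural properties, colours $A - e$ by the induction hypothesis, and then colours $e$ to avoid completing a monochromatic $H_1$ in red or $H_2$ in blue. The key sub-lemma is that both colour choices cannot simultaneously fail: if they did, then $e$ would be the unique intersection edge of some red copy of $H_1$ with some blue copy of $H_2$ in $A$, and a counting argument using the strict balancedness of $H_1$ with respect to $d_2(\cdot, H_2)$ together with the density cap $m(A) \leq m_2(H_1, H_2) + \varepsilon$ should rule out too many such forced conflicts coexisting inside a single graph of $\hat{\mathcal{A}}$.

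The hardest part is almost certainly the colouring step, because the graphs in $\hat{\mathcal{A}}$ are by design exactly the structural obstructions one most fears when trying to construct a valid colouring. The argument therefore cannot be purely local; it must exploit the global interplay between the density cap, the $2$-connectedness, and the balancedness assumptions on $H_1$ and $H_2$. A plausible alternative (or supplement) to induction is a Lov\'{a}sz Local Lemma computation on a random red/blue colouring of $A$: finiteness of $\hat{\mathcal{A}}$ bounds the number of copies of $H_1$ and $H_2$ through any edge, and the density cap bounds their mutual overlap, so an LLL argument tailored to the $\mathcal{C}^*$-structure may apply. The successes in the clique case~\cite{msss} and the clique-versus-cycle case~\cite{lmms} suggest that substantial case analysis will be unavoidable, and that the full general statement will likely demand a structural invariant strong enough to handle simultaneously the $\mathcal{C}^*$ graphs arising when $m_2(H_1) > m_2(H_2)$ and the $\mathcal{C}$ graphs arising when $m_2(H_1) = m_2(H_2)$.
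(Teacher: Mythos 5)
The statement you were asked to prove is Conjecture~\ref{conj:aconj}, which the paper explicitly leaves \emph{open}: the paper's main result (Theorem~\ref{thm:main}) shows that this conjecture would \emph{imply} the $0$-statement of Kohayakawa--Kreuter, and Theorem~\ref{thm:regular} establishes the conjecture only for certain pairs of regular graphs. So there is no full paper proof against which to check your argument. In the one setting where the paper does prove the conjecture (Theorem~\ref{thm:regular}), it does so by a route entirely different from yours: it shows that for regular $H_1$ and $H_2$ the set $\hat{\mathcal{A}}$ is in fact \emph{empty}, because the definition of $\mathcal{C}$ forces minimum degree $\delta(A) \geq \ell_1 + \ell_2 - 1$ in any $A \in \mathcal{C}(H_1,H_2)$, and an elementary computation then gives $d(A) > m_2(H_1,H_2) + \varepsilon$. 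With $\hat{\mathcal{A}}$ empty, both finiteness and colourability are vacuous, and neither of the two mechanisms in your plan is needed.

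There is a concrete gap in your finiteness sketch worth naming. You argue that a chain of ``generic'' extensions (attaching copies of $H_1$ or $H_2$ at a single edge with no extra overlap) builds a tree-like superstructure which must eventually be closed up to restore $2$-connectedness, and that the closing-up pushes the density above $m_2(H_1,H_2) + \varepsilon$. But $2$-connectedness is not the binding constraint: the flower $F_2$ in Figure~\ref{fig:c*bad} (a central $H_2$ with one $H_1$-petal appended at every edge, all pairwise vertex-disjoint outside the attachment) is already $2$-connected and has $\lambda(F_2) = \lambda(H_1) > 0$, yet it is \emph{not} a member of $\mathcal{C}^*$, since the outer edges of the petals are contained in no $L \in \mathcal{L}^*_{F_2}$. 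So the structural constraint that actually rules out long chains of generic extensions is membership in $\mathcal{C}^*$ (resp.~$\mathcal{C}$), not $2$-connectedness. Covering every edge forces an infinite regress of further flowers unless one instead creates overlaps, and it is those forced overlaps that should raise the density above the cap. This distinction is exactly what the paper's \textsc{Grow} analysis (Claims~\ref{claim:non-degen} and~\ref{claim:degenfull}) formalises --- but that analysis is deployed to prove Theorem~\ref{thm:main} \emph{assuming} Conjecture~\ref{conj:aconj}, not to establish it.

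On the colouring half: neither the induction-on-edges proposal nor the LLL proposal is developed to the point where one can tell whether it works, and both face the same obstacle, namely that the graphs in $\hat{\mathcal{A}}$ are designed to be the ones on which local colouring arguments break (every edge of every $A \in \hat{\mathcal{A}}$ is the unique edge-intersection of an $H_1$-copy and an $H_2$-copy, so deleting any edge for induction destroys the structure in an uncontrolled way, and the dependency graph for an LLL argument is not obviously sparse enough). In the previously resolved cases the colouring either is vacuous ($\hat{\mathcal{A}} = \emptyset$ in \cite{lmms} and in Theorem~\ref{thm:regular}) or requires bespoke case analysis for the finitely many members of $\hat{\mathcal{A}}$ (\cite{msss}). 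Your plan does not yet contain the structural invariant that would make either the inductive step or the LLL dependency bound go through in general.

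Finally, be aware that the cap in the definition of $\hat{\mathcal{A}}$ is $m(A) \leq m_2(H_1,H_2) + \varepsilon$, not a strict inequality below $m_2(H_1,H_2)$, precisely to allow the paper to argue (in Claims~\ref{claim:grow} and~\ref{claim:growsym}) that a $2$-connected member of $\mathcal{C}^*$ (resp.~$\mathcal{C}$) not in $\hat{\mathcal{A}}$ has density strictly above the cap. Your bookkeeping would need to account for the $\varepsilon$ slack: extensions preserving $\lambda$ exactly keep the density at exactly $m_2(H_1,H_2)$, and one must show that the density jump caused by each forced overlap is bounded below by a constant depending only on $H_1$ and $H_2$, so that only finitely many overlaps can happen before the cap is exceeded. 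This is nontrivial and is the content of the technical heart of the paper (Lemma~\ref{lemma:21} and Claim~\ref{claim:degenfull}).
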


Notice that we can assume $H_1 \neq H_2$ as the $H_1 = H_2$ case of Conjecture~\ref{conj:kk} is handled by Theorem~\ref{thm:rr}.

To be clear, the main purpose of this paper is to show that if Conjecture~\ref{conj:aconj} holds then the rest of a variant of a standard approach for attacking the $0$-statement of Conjecture~\ref{conj:kk} falls into place.
That is, Conjecture~\ref{conj:aconj} is a natural subproblem of Conjecture~\ref{conj:kk}. Thus we prove the following theorem.

\begin{thm}\label{thm:main}
If Conjecture~\ref{conj:aconj} is true then the $0$-statement of Conjecture~\ref{conj:kk} is true.
\end{thm}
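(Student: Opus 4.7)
The plan is to adapt the deletion/peeling strategy of Marciniszyn--Skokan--Sp\"ohel--Steger~\cite{msss}: reduce to $2$-connected blocks of $G = G_{n,p}$, iteratively peel off edges that can be safely coloured last, apply Conjecture~\ref{conj:aconj} to the resulting core, and then extend the colouring back along the sequence of deletions. Throughout, I fix $p = bn^{-1/m_2(H_1, H_2)}$ for a sufficiently small constant $b = b(H_1, H_2, \varepsilon) > 0$ and assume the standard reductions from Section~\ref{sec:0statement}, so that $H_1$ and $H_2$ satisfy the density hypotheses of Conjecture~\ref{conj:aconj}. By Lemma~\ref{lemma:2connected}, both $H_1$ and $H_2$ are $2$-connected, hence no copy of $H_1$ or $H_2$ in $G$ spans two blocks. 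The edges of the forest $\boldsymbol{F}_1(G)$ can therefore be coloured arbitrarily, and it suffices to find a valid edge-colouring within each $2$-connected block $B \in \boldsymbol{F}_2(G)$.

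For each such block $B$, I would define a \emph{core} $B^*$ by the following peeling procedure: while possible, delete an edge $e$ from the current subgraph $B'$ which, in $B'$, is not contained in any $L \in \mathcal{L}^*_{B'}$ if $m_2(H_1) > m_2(H_2)$, respectively is not the unique edge-intersection of any pair $(L, R) \in \mathcal{L}_{B'} \times \mathcal{R}_{B'}$ if $m_2(H_1) = m_2(H_2)$. By construction, $B^*$ is either empty or lies in $\mathcal{C}^*(H_1, H_2)$ (respectively $\mathcal{C}(H_1, H_2)$). A short structural argument, using $2$-connectedness of $H_1$ and $H_2$, shows that $B^*$ inherits $2$-connectedness from $B$.

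Next I would apply the first moment method to show that a.a.s.\ every $2$-connected subgraph $B' \subseteq G_{n,p}$ satisfies $m(B') \leq m_2(H_1, H_2) + \varepsilon$. For $B'$ of bounded order this is a union bound over finitely many isomorphism types, each contributing expected count $n^{\lambda(B')} \cdot b^{e(B')} = o(1)$ because $\lambda(B') < 0$ in this density regime; for $B'$ of unbounded order one uses a standard enumeration of $2$-connected graphs of prescribed density (as in \cite{msss}) to control the sum over isomorphism types. Combined with the previous step, this yields $B^* \in \hat{\mathcal{A}}(H_1, H_2, \varepsilon)$ a.a.s.

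Finally, invoking Conjecture~\ref{conj:aconj}, $\hat{\mathcal{A}}$ is finite and each of its elements admits a valid edge-colouring for $H_1$ and $H_2$; I fix such a colouring for every isomorphism type and apply it to each core $B^*$. Reinserting the peeled edges in reverse order, at each step I colour the new edge $e$ so as to avoid completing a monochromatic copy of $H_1$ or $H_2$ — the peeling rule precisely guarantees that at least one of the two colours works, since at the moment of removal $e$ was not part of the obstruction structure $\mathcal{L}^*$ (respectively, of any $\mathcal{L} \times \mathcal{R}$ pair intersecting uniquely at $e$). The main obstacle is the density step: it requires an effective enumeration, uniform in size, of $2$-connected subgraphs of density exceeding $m_2(H_1, H_2) + \varepsilon$ together with a careful verification that the peeling produces a $2$-connected core in the correct one of $\mathcal{C}$ or $\mathcal{C}^*$. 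This is where the strictly balanced and strictly $2$-balanced hypotheses on $H_1$ and $H_2$ enter, via Proposition~\ref{prop:m2h1h2} and the interpretation of $m_2(H_1, H_2)$ as the edge-to-vertex ratio of the $\mathcal{C}^*$-attachment illustrated in Figure~\ref{fig:c*bad}.
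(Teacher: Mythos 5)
Your high-level strategy follows the same lines as the paper's (an \textsc{Asym-Edge-Col}-style peeling, with Conjecture~\ref{conj:aconj} colouring the core and the stack of deleted edges guiding re-insertion), but the step you flag as ``the main obstacle'' is in fact exactly where the argument breaks down as you have stated it. The assertion that a.a.s.\ every $2$-connected subgraph $B' \subseteq G_{n,p}$ satisfies $m(B') \leq m_2(H_1,H_2) + \varepsilon$ is false: for $p = bn^{-1/m_2(H_1,H_2)}$ we have $np \to \infty$ (since $m_2(H_1,H_2) > 1$), so a.a.s.\ $G_{n,p}$ contains a $2$-connected subgraph on $\Theta(n)$ vertices with density tending to infinity --- indeed, essentially the whole giant $2$-core. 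A first-moment union bound over $2$-connected graphs of density $> m_2(H_1,H_2) + \varepsilon$ also does not close, because the number of isomorphism types on $k$ vertices grows like $\exp(\Omega(k\log k))$, which overwhelms $n^{-\Theta(k)}$ once $k \gg \log n / \log\log n$. There is no ``standard enumeration'' that rescues this; controlling the family of possible cores is the entire content of Sections~\ref{sec:grow}--\ref{sec:lemma21} of the paper: the \textsc{Grow} algorithm builds any offending core iteratively from a seed copy of $H_1$, Claims~\ref{claim:non-degen} and~\ref{claim:degenfull} show $\lambda$ is non-increasing and drops by a constant at each degenerate step, Claims~\ref{claim:q_1} and~\ref{claim:polylog} turn this into a polylogarithmic bound on the number of reachable isomorphism types, and only then does Markov (Claim~\ref{claim:conclusion}) go through. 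The technical heart, Lemma~\ref{lemma:21}, has no analogue in your proposal.

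Two further, smaller gaps are worth noting. First, your peeling stops when the core lies in $\mathcal{C}^*$ (resp.~$\mathcal{C}$), and you then want the core to be a \emph{single} graph in $\hat{\mathcal{A}}$; but nothing forces this. The paper's stopping condition is weaker --- $G'$ is an \emph{$(H_1,H_2)$-sparse $\hat{\mathcal{A}}$-graph}, i.e.\ an edge-disjoint union of graphs from $\hat{\mathcal{A}}$ with no non-trivial copy of $H_1$ or $H_2$ spanning two pieces (Definitions~\ref{def:agraph} and~\ref{def:h1h2sparse}) --- and Lemma~\ref{lemma:acolour} colours precisely such unions. Your claim that the peeled core $B^*$ inherits $2$-connectedness from $B$ is also not justified: deleting edges can create cut vertices, and the paper does not assert $2$-connectedness of the core (only of the graphs in $\hat{\mathcal{A}}$ that tile it). Second, the re-insertion argument ``at least one of the two colours works'' glosses over the mechanism by which the algorithm avoids creating \emph{both} a blue $H_2$ and, after a swap, a red $H_1$. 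The paper's implementation pushes not just edges but whole copies of $H_2$ onto the stack (lines~\ref{line:Lpush1} and~\ref{line:Lpush2}), and Lemma~\ref{lemma:errorvalid} needs both the reverse-ordering of the stack and the specific choice of the edge $f$ in line~\ref{line:getf} to show the swap cannot complete a red $H_1$; these details are not optional.
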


We prove Conjecture~\ref{conj:aconj} for almost every pair of regular graphs, which, by Theorem~\ref{thm:main}, significantly extends the class of graphs for which the $0$-statement of Conjecture~\ref{conj:kk} is resolved. 

\begin{thm}\label{thm:regular}
Let $H_1$ and $H_2$ meet the criteria in Conjecture~\ref{conj:aconj}. In addition, let $H_1$ and $H_2$ be regular graphs, excluding the cases when (i) $H_1$ and $H_2$ are a clique and a cycle, (ii) $H_2$ is a cycle and $|V(H_1)| \geq |V(H_2)|$ or (iii) $(H_1, H_2) = (K_3, K_{3,3})$.
Then Conjecture~\ref{conj:aconj} is true for $H_1$ and $H_2$.
\end{thm}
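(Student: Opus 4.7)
The plan is to verify both clauses of Conjecture~\ref{conj:aconj} for pairs of regular graphs $H_1, H_2$ satisfying the hypotheses: that $\hat{\mathcal A}(H_1, H_2, \varepsilon)$ is finite for some suitable $\varepsilon > 0$, and that every member admits a valid edge-colouring for $H_1$ and $H_2$. Throughout let $k_i$ denote the common vertex-degree of $H_i$; the hypothesis $m_2(H_1) \ge m_2(H_2) > 1$ forces $k_1 \ge k_2 \ge 2$.

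For finiteness, fix $A \in \hat{\mathcal A}$ and build it iteratively, starting from a single copy of $H_2$ and attaching new copies of $H_2$ (with $H_1$-witnesses at each of their edges in the $\mathcal C^\ast$ case) and of $H_1$ until every edge of $A$ is covered. Strict balancedness of $H_2$ and $H_1$ with respect to the appropriate density guarantees that any attachment sharing only a single edge with the already-built part contributes new vertices and edges in the ratio exactly $m_2(H_1, H_2)$ (the calculation appearing in Section~\ref{sec:ainc*orinc}), while every attachment with strictly greater overlap contributes them in ratio at least $m_2(H_1, H_2) + \eta_0$ for some constant $\eta_0 = \eta_0(H_1, H_2) > 0$ arising from the strict balancedness hypothesis. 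The density bound $m(A) \le m_2(H_1, H_2) + \varepsilon$ caps the number of overlapping attachments at $O(\varepsilon / \eta_0)$, while 2-connectedness combined with the regularity of $H_1, H_2$ caps the length of any chain of minimal extensions (as such chains produce $2$-vertex cuts). This bounds $v(A)$ by a constant depending only on $H_1, H_2, \varepsilon$.

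For the edge-colouring I propose a two-phase degree-peeling scheme. \emph{Phase~1:} while the current graph has a vertex $v$ of degree $< k_2$, colour every edge at $v$ blue and remove $v$. No blue copy of $H_2$ survives: for any such putative copy $L \subseteq A$, let $v$ be the first vertex of $L$ to be peeled; at that moment $L$ is still wholly present, so $v$ has $k_2$ neighbours in $L$, contradicting the peeling criterion. \emph{Phase~2:} on the surviving graph $A'$, while there is a vertex of degree $< k_1$, colour its edges red and remove it; by the same argument no red copy of $H_1$ arises. The residual graph $A''$ satisfies $\delta(A'') \ge k_1$ and $m(A'') \le m_2(H_1, H_2) + \varepsilon$, while $v(A'') \le v(A)$ is bounded from the finiteness step; combined with the regularity of $H_1, H_2$ these constraints restrict $A''$ to a short explicit list of configurations. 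Each is coloured by hand---typically by colouring the edges of a distinguished copy of $H_1$ inside $A''$ blue and the remaining edges red, using the density cap to preclude any blue copy of $H_2$.

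The hard part is the residual classification. The three excluded families in Theorem~\ref{thm:regular}---$(H_1,H_2)$ with $H_1$ a clique and $H_2$ a cycle, $H_2$ a cycle with $v(H_1) \ge v(H_2)$, and $(K_3, K_{3,3})$---are precisely the cases where a small $2$-connected residue survives peeling and admits no valid colouring: cycles as $H_2$ make the peeling inefficient (since $k_2 = 2$) and permit long chains of $H_1$-copies strung along a cycle to remain, while $(K_3, K_{3,3})$ conceals a specific obstruction tied to the bipartite $3$-regular structure of $K_{3,3}$. For all other regular pairs the residue reduces to a bounded number of small configurations, each straightforwardly coloured; the technical heart of the argument is this case analysis, to be carried out with particular care in the $m_2(H_1) = m_2(H_2)$ regime where the $\mathcal C$ (rather than $\mathcal C^\ast$) defining condition gives $A$ more flexibility and the red/blue asymmetry afforded by $k_1 \ge k_2$ is weakest.
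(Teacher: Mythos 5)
Your proposal misses the decisive observation that makes the paper's proof of Theorem~\ref{thm:regular} short: for every admissible pair of regular graphs outside the three excluded families, the set $\hat{\mathcal{A}}(H_1,H_2,\varepsilon)$ is in fact \emph{empty} for a suitable $\varepsilon>0$, so Conjecture~\ref{conj:aconj} holds vacuously and there is nothing to colour. The paper's argument is a two-line density bound: every $A\in\hat{\mathcal{A}}$ lies in $\mathcal{C}(H_1,H_2)$, so each vertex of $A$ sits in a copy of $H_1$ and a copy of $H_2$ sharing exactly one edge, hence $\delta(A)\ge \ell_1+\ell_2-1$ and $d(A)\ge(\ell_1+\ell_2-1)/2$; the remainder is an explicit computation showing $(\ell_1+\ell_2-1)/2>m_2(H_1,H_2)$ for all non-excluded regular pairs, so $m(A)>m_2(H_1,H_2)+\varepsilon$, contradicting membership in $\hat{\mathcal{A}}$.

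By contrast, you set out to prove both finiteness and colourability of a potentially non-empty $\hat{\mathcal{A}}$ via an iterative build-up plus a degree-peeling scheme, and you explicitly defer the ``residual classification'' that would make the colouring phase rigorous. Neither half is carried through: the finiteness sketch appeals to ``chains of minimal extensions produce $2$-vertex cuts'' without a quantitative bound, and the peeling scheme leaves the genuinely hard case (the $2$-connected residue $A''$ with $\delta(A'')\ge k_1$) to a case analysis you do not perform. Even if completed, this route is dramatically longer than the paper's and would need to rediscover, in the residue classification, precisely the density inequality $\delta(A)\ge \ell_1+\ell_2-1 \Rightarrow d(A)>m_2(H_1,H_2)$ that the paper uses up front. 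The three excluded families are not, as you suggest, cases where ``a small $2$-connected residue survives and admits no valid colouring''; they are simply the regular pairs for which the inequality $(\ell_1+\ell_2-1)/2>m_2(H_1,H_2)$ fails (or is not established by this method), so the emptiness argument does not apply. The concrete gap, then, is this: you never show $\hat{\mathcal{A}}=\emptyset$, and the alternative path you propose is both incomplete and unnecessary for the statement at hand.
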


As a natural subproblem of the $0$-statement of Conjecture~\ref{conj:kk}, we believe that Conjecture~\ref{conj:aconj} is a considerably more approachable problem than the $0$-statement of Conjecture~\ref{conj:kk}.
Indeed, the techniques used in the proof of Theorem~\ref{thm:regular} are elementary and uncomplicated. Thus, we hope that a full resolution of Conjecture~\ref{conj:kk} can be achieved via Theorem~\ref{thm:main}.

\section{Overview of the proof of Theorem~\ref{thm:main}}\label{sec:proofsketch}

As mentioned earlier, to prove Theorem~\ref{thm:main} we will employ a variant of a standard approach for attacking $0$-statements of Ramsey problems.
For attacking the $0$-statement of Conjecture~\ref{conj:aconj}, this standard approach is as follows: 

\begin{itemize}
    \item For $G = G_{n,p}$, assume $G \to (H_1, H_2)$;
    \item Use structural properties of $G$ (resulting from this assumption) to show that $G$ contains at least one of a sufficiently small collection of non-isomorphic graphs $\mathcal{F}$;
    \item Show that there exists a constant $b > 0$ such that for $p \leq bn^{-1/m_2(H_1,H_2)}$ we have that $G$ contains no graph in $\mathcal{F}$ a.a.s.;
    \item Conclude, by contradiction, that $G \not\to (H_1, H_2)$ a.a.s.
\end{itemize}  

The variant of this approach we will use is due to Marciniszyn, Skokan, Sp\"{o}hel and Steger~\cite{msss}, who proved Conjecture~\ref{conj:kk} for cliques.
In \cite{msss}, for $r > \ell \geq 3$, they employ an algorithm called \textsc{Asym-Edge-Col} which either produces a valid edge-colouring for $K_r$ and $K_{\ell}$ of $G$ (showing that $G \not\to (K_r, K_{\ell})$) or encounters an error.
Instead of assuming $G \to (K_r, K_{\ell})$, they assume algorithm \textsc{Asym-Edge-Col} encounters an error, and proceed with the standard approach from there.
One of the advantages of this approach is that it provides an algorithm for constructing a valid edge-colouring for $K_r$ and $K_{\ell}$, rather than just proving the existence of such a colouring. 

\subsection{On Conjecture~\ref{conj:aconj}} 
As mentioned earlier, we provide all but one step, Conjecture~\ref{conj:aconj}, of this approach. Let us consider how Conjecture~\ref{conj:aconj} relates to previous work on the $0$-statement of Conjecture~\ref{conj:kk}.
Firstly, Conjecture~\ref{conj:aconj} was implicitly proven for pairs of cliques in \cite{msss} and pairs of a clique and a cycle in \cite{lmms}.
More specifically, when $H_1$ and $H_2$ are both cliques (except when $H_1 = H_2 = K_3$)\footnote{The case $H_1 = H_2$ of Conjecture~\ref{conj:kk} is, of course, covered by Theorem~\ref{thm:rr}.},
the authors of \cite{msss} prove a slightly more general version of Conjecture~\ref{conj:aconj} (Lemma 8 in \cite{msss}) where $\hat{\mathcal{A}}(H_1, H_2, \varepsilon)$ is replaced with the set \[\mathcal{A}(H_1,H_2) := \{A \in \mathcal{C}(H_1, H_2) : m(A) \leq m_2(H_1, H_2) + 0.01 \land A \ \mbox{is $2$-connected}\}.\]
Note that the proof of Lemma 8 in \cite{msss} shows that $\mathcal{A}(H_1,H_2) \neq \emptyset$ for certain pairs of cliques $H_1$ and $H_2$. When $H_1$ is a clique, $H_2$ is a cycle and $H_1 \neq H_2$ (that is, excluding again the case when $H_1 = H_2 = K_3$), the proof of Lemma 3.3 in \cite{lmms} implies that there exists a constant $\varepsilon > 0$ such that $\hat{\mathcal{A}}(H_1, H_2, \varepsilon) = \emptyset$.

For reference, we note here the places in our proof of Theorem~\ref{thm:main} where we specifically need Conjecture~\ref{conj:aconj} to hold:

 \begin{itemize}
     \item the proof of Lemma~\ref{lemma:acolour};
     \item the proofs of Claims~\ref{claim:conclusion} and \ref{claim:conclusionsym};
     \item the definition of $\gamma = \gamma(H_1, H_2)$ in Section~\ref{sec:grow}.
 \end{itemize} 

\subsection{Proof sketch of Theorem~\ref{thm:main}}
Let us now proceed with describing the proof of Theorem~\ref{thm:main} in detail. 
In what follows, we write \emph{(Result~A; Result~B)} to mean that `Result~B in \cite{msss} fulfils the same role (in \cite{msss}) as Result~A does in our proof of Theorem~\ref{thm:main}'. 
This is to illustrate how we indeed provide every step bar one (Conjecture~\ref{conj:aconj}) of a proof of the $0$-statement of Conjecture~\ref{conj:kk}.

 Firstly, as in \cite{msss}, we give an algorithm \textsc{Asym-Edge-Col} that, assuming Conjecture~\ref{conj:aconj} holds, produces a valid edge-colouring for $H_1$ and $H_2$ of $G = G_{n,p}$ provided it does not encounter an error (Lemma~\ref{lemma:errorvalid}; Lemma 11).
Our aim then is to prove that \textsc{Asym-Edge-Col} does not encounter an error a.a.s.~(Lemma~\ref{lemma:noerror}; Lemma 12), that is, $G \not\to (H_1, H_2)$ a.a.s.
We split our proof of Lemma~\ref{lemma:noerror} into two cases: when $m_2(H_1) > m_2(H_2)$ and when $m_2(H_1) = m_2(H_2)$.

Suppose for a contradiction that \textsc{Asym-Edge-Col} encounters an error. Let $G' \subseteq G$ be the graph that \textsc{Asym-Edge-Col} got stuck on when it encountered this error.
In the $m_2(H_1) > m_2(H_2)$ case, we input $G'$ into an auxiliary algorithm \textsc{Grow} which always outputs a subgraph $F \subseteq G'$ (Claim~\ref{claim:grow}; Claim 13) belonging to a sufficiently small collection of non-isomorphic graphs $\mathcal{F}$. 
The definition of $\mathcal{F}$ will be such that with high probability no copy of any $F \in \mathcal{F}$ will be present in $G_{n,p}$, provided that $|\mathcal{F}|$ is sufficiently small. 

In order to show $|\mathcal{F}|$ is sufficiently small, we carefully analyse the possible outputs of \textsc{Grow}.
Assuming Conjecture~\ref{conj:aconj} holds, we show that only a constant number of graphs can be produced by \textsc{Grow} if one of two special cases occurs. 
If neither of these special cases occur, then, starting from a copy of $H_1$, in each step of \textsc{Grow} our subgraph $F$ is constructed iteratively by either (i) appending a copy of $H_1$ to $F$ or (ii) appending a `flower-like' structure to $F$, consisting of a central copy of $H_2$ with `petals' that are appended copies of $H_1$. We say an iteration is \emph{degenerate} if it is of type (i) or, loosely speaking, of type (ii) where `the flower is folded in on itself or into $F$'.
Otherwise an iteration is called \emph{non-degenerate}. Denote by $\lambda(F)$ the order of magnitude of the expected number of copies of $F$ in $G_{n,p}$ with $p = bn^{-1/m_2(H_1,H_2)}$.
Key to showing $|\mathcal{F}|$ is sufficiently small is proving that $\lambda(F)$ stays the same after a non-degenerate iteration (Claim~\ref{claim:non-degen}; Claim 14) and decreases by a \emph{constant} amount after a degenerate iteration (Claim~\ref{claim:degenfull}; Claim 15). 
Indeed, one of the termination conditions for \textsc{Grow} is that $\lambda(F) < -\gamma$ (where $\gamma = \gamma(H_1, H_2,\varepsilon) > 0$ is defined later in Section~\ref{sec:grow}, given $\varepsilon = \varepsilon(H_1, H_2) > 0$, the constant acquired from assuming Conjecture~\ref{conj:aconj} holds), that is, only a constant number of such \emph{degenerate} steps occur before \textsc{Grow} terminates (Claim~\ref{claim:q_1}; Claim 16).
Proving Claim~\ref{claim:degenfull} is the main work of this paper.
An important step in proving it is showing that if an iteration of type (ii) occurs where, loosely speaking, `the flower is folded in on itself', we get a helpful inequality comparing this iteration with a non-degenerate iteration (Lemma~\ref{lemma:21}; Lemma 21).
Indeed, the most novel work of this paper is the proof of Lemma~\ref{lemma:21}. 

The proof of Lemma~\ref{lemma:noerror} in the $m_2(H_1) = m_2(H_2)$ case is both similar and significantly simpler.
Notably, we use a different algorithm, \textsc{Grow-Alt}, to grow our subgraph $F \subseteq G'$.
Our analysis of \textsc{Grow-Alt} is much quicker than that of \textsc{Grow}, allowing us to easily prove a result analogous to Claim~\ref{claim:degenfull}.

\subsection{Differences between our work and \cite{msss}}
 As mentioned earlier, our approach to proving Theorem~\ref{thm:main} builds on the work of Marciniszyn, Skokan, Sp\"{o}hel and Steger in \cite{msss}. 
For readers familiar with \cite{msss}, we include the following list of differences between this paper and \cite{msss} (some of which we have already noted):

 \begin{itemize}
     \item We prove and employ a new result (Lemma~\ref{lemma:2connected}) concerning types of balancedness and $2$-connectivity;
     \item When $m_2(H_1) > m_2(H_2)$, we refine the proof in \cite{msss} to consider $\hat{\mathcal{A}}(H_1, H_2, \varepsilon)$ instead of $\mathcal{A}(H_1, H_2)$; 
     \item We generalise from considering triangle-sparse graphs to considering $(H_1, H_2)$-sparse graphs (see Section~\ref{sec:asymedgecol});
     \item Lemma~\ref{lemma:21} and its setup (see Sections~\ref{sec:claimdegenfull} and \ref{sec:lemma21}) are quite different to Lemma 21 and its setup in \cite{msss};
     \item Although Claim~\ref{claim:degen1} is analogous to Claim 19 in \cite{msss}, its proof is quite different, utilising the balancedness properties of $H_1$ and $H_2$;
     \item Although Claim~\ref{claim:degen2} is analogous to Claim 22 in \cite{msss}, our proof is slightly different, swapping the latter two steps of the proof of Claim 22 in order to apply our Lemma~\ref{lemma:21} in place of Lemma 21;
     \item To account for $H_1$ and $H_2$ possibly having less structure than cycles or cliques\footnote{In particular, for $i \in \{1,2\}$, $H_i$ may not have the property that when one removes any single edge (and no vertices) from $H_i$ one gets the same isomorphic graph, irrespective of which edge is removed.}, the statement of Claim~\ref{claim:polylog} and the proofs of Claims~\ref{claim:polylog} and \ref{claim:conclusion} differ somewhat from their counterparts (Claims 17 and 18) in \cite{msss};
     \item When $m_2(H_1) = m_2(H_2)$, we use a slightly different algorithm \textsc{Grow-Alt} (see Section~\ref{sec:altcase}) to algorithm \textsc{Grow}.
 \end{itemize}

\section{Organisation of paper}
The paper is organised as follows.
In Section~\ref{sec:notation}, we collect together notation, density measures and several useful results we will need. 
In Section~\ref{sec:asymedgecol}, we give our algorithm \textsc{Asym-Edge-Col} for producing a valid edge-colouring for $H_1$ and $H_2$ of $G = G_{n,p}$ provided it does not encounter an error (and Conjecture~\ref{conj:aconj} holds for $H_1$ and $H_2$).
In Sections~\ref{sec:grow}-\ref{sec:lemma21}, we prove that \textsc{Asym-Edge-Col} does not encounter an error a.a.s.~(Lemma~\ref{lemma:noerror}) in the case when $m_2(H_1) > m_2(H_2)$. 
In Section~\ref{sec:altcase}, we prove Lemma~\ref{lemma:noerror} in the case when $m_2(H_1) = m_2(H_2)$. 
In Section~\ref{sec:claimregular}, we prove Theorem~\ref{thm:regular}, before providing some concluding remarks in Section~\ref{sec:concludingremarks}.

\section{Notation, density measures and useful results}\label{sec:notation}

As far as possible we keep to the notation used in \cite{msss}.
Also, we repeat several definitions used earlier for ease of reference. 

Let $G = (V,E)$ be a graph. We denote the number of vertices in $G$ by $v(G) = v_G := |V(G)|$ and the number of edges in $G$ by $e(G) = e_G := |E(G)|$.
Moreover, for graphs $H_1$ and $H_2$ we let $v_1 := |V(H_1)|$, $e_1 := |E(H_1)|$, $v_2 := |V(H_2)|$ and $e_2 := |E(H_2)|$.  

Let $H$ be a graph. The most well-known density measure is \[d(H) := 
    \begin{cases}
        e_H/v_H         & \quad \text{if } v(H) \geq 1,\\
        0               & \quad \text{otherwise}.
    \end{cases}\]
Taking the maximum value of $d$ over all subgraphs $J \subseteq H$, we have the following measure \[m(H) := \max\{d(J): J \subseteq H\}.\]
(We say that a graph $H$ is \emph{balanced with respect to $d$}, or just \emph{balanced}, if we have $d(H) = m(H)$. 
Moreover, we say $H$ is \emph{strictly balanced} if for every proper subgraph $J \subsetneq H$, we have $d(J) < m(H)$.)

In \cite{rr1}, R\"{o}dl and Ruci\'{n}ski introduced the following so-called \emph{$2$-density measure}. \[d_2(H) := 
    \begin{cases}
        (e_H - 1)/(v_H - 2)         & \quad \text{if $H$ is non-empty with} \ v(H) \geq 3,\\
        1/2                         & \quad \text{if} \ H \cong K_2,\\    
        0                           & \quad \text{otherwise}.
    \end{cases}\]
As with $d$, we have an associated measure based on maximising $d_2$ over subgraphs of $H$: \[m_2(H) := \max\left\{d_2(J): J \subseteq H\right\}.\] 
Analogously to the notion of balancedness, we say that a graph $H$ is \emph{$2$-balanced} if $d_2(H) = m_2(H)$, and \emph{strictly $2$-balanced} if for all proper subgraphs $J \subsetneq H$, we have $d_2(J) < m_2(H)$.

Regarding asymmetric Ramsey properties, in \cite{kk}, Kohayakawa and Kreuter introduced the following generalisation of $d_2$. Let $H_1$ and $H_2$ be any graphs, and define \[d_2(H_1,H_2) := 
    \begin{cases}
        \frac{e_1}{v_1 - 2 + \frac{1}{m_2(H_2)}}         & \quad \text{if $H_2$ is non-empty and} \ v_1 \geq 2,\\
        0                         & \quad \text{otherwise}.
    \end{cases}\]
Similarly to before, we have the following measure based on maximising $d_2$ over all subgraphs $J \subseteq H_1$. \[m_2(H_1, H_2) := \max\left\{d_2(J,H_2): J \subseteq H_1\right\}.\]
We say that $H_1$ is \emph{balanced with respect to $d_2(\cdot, H_2)$} if we have $d_2(H_1, H_2) = m_2(H_1, H_2)$ and \emph{strictly balanced with respect to $d_2(\cdot, H_2)$} if for all proper subgraphs $J \subsetneq H_1$ we have $d_2(J, H_2) < m_2(H_1, H_2)$. 

Observe that $m_2(\cdot, \cdot)$ is not symmetric in both arguments. 
Recall Proposition~\ref{prop:m2h1h2}.

\begin{prop:m2h1h2}
Suppose that $H_1$ and $H_2$ are non-empty graphs with $m_2(H_1) \geq m_2(H_2)$. 
Then we have \[m_2(H_1) \geq m_2(H_1, H_2) \geq m_2(H_2).\]
Moreover, \[m_2(H_1) > m_2(H_1, H_2) > m_2(H_2) \ \mbox{whenever} \ m_2(H_1) > m_2(H_2).\]
\end{prop:m2h1h2}

Note that if $m_2(H_1) = m_2(H_2)$ and $H_1$ and $H_2$ are non-empty graphs, then $H_1$ cannot be strictly balanced with respect to $d_2(\cdot, H_2)$ unless $H_1 \cong K_2$. 
Indeed, otherwise, by Proposition~\ref{prop:m2h1h2} we would then have that \[m_2(H_2) = m_2(H_1, H_2) > d_2(K_2, H_2) = m_2(H_2).\] 
 
The following fact will be useful in the proofs of Lemmas~\ref{lemma:2connected} and \ref{lemma:21}.

\begin{fact}\label{fact:ineq}
For $a,c,C \in \mathbb{R}$ and $b,d > 0$, we have 
\[(i) \ \ \frac{a}{b} \leq C \ \land \ \frac{c}{d} \leq C \implies \frac{a+c}{b+d} \leq C \ \mbox{and} \ \ (ii) \ \ \frac{a}{b} \geq C \ \land \ \frac{c}{d} \ \geq C \implies \frac{a+c}{b+d} \geq C\] and similarly, if also $b > d$, 
\[(iii) \ \ \frac{a}{b} \leq C \ \land \ \frac{c}{d} \geq C \implies \frac{a-c}{b-d} \leq C \ \mbox{and} \ \ (iv) \ \ \frac{a}{b} \geq C \ \land \ \frac{c}{d} \ \leq C \implies \frac{a-c}{b-d} \geq C.\]
\end{fact}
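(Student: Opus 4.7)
The plan is to prove all four parts by clearing denominators, which is legitimate since $b, d > 0$ (and in (iii), (iv) also $b - d > 0$). This reduces each implication to an additive statement about real numbers that is immediate.

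For (i), I would rewrite the hypotheses as $a \leq Cb$ and $c \leq Cd$, using $b, d > 0$. Adding these two inequalities yields $a + c \leq C(b+d)$, and since $b + d > 0$ I can divide through to obtain $(a+c)/(b+d) \leq C$. Part (ii) follows by exactly the same argument with the inequality signs reversed: $a \geq Cb$ and $c \geq Cd$ sum to $a + c \geq C(b+d)$, and division by $b+d > 0$ gives the claim.

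For (iii), I would rewrite the hypotheses as $a \leq Cb$ and $c \geq Cd$. Subtracting the second from the first gives $a - c \leq Cb - Cd = C(b-d)$. The extra assumption $b > d$ ensures $b - d > 0$, so dividing through preserves the inequality and yields $(a-c)/(b-d) \leq C$. Part (iv) is analogous: from $a \geq Cb$ and $c \leq Cd$ I subtract to get $a - c \geq C(b-d)$, and again divide by $b - d > 0$.

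There is no real obstacle here; the only points requiring care are (a) using the sign of the denominators ($b > 0$, $d > 0$, and $b - d > 0$ in parts (iii), (iv)) to multiply or divide without flipping inequalities, and (b) keeping track of the correct direction of each inequality when combining them additively. All four parts are therefore immediate one-line manipulations, so the entire proof will be compact.
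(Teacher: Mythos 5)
Your proof is correct: clearing the (positive) denominators, combining the resulting linear inequalities additively, and dividing back is exactly the standard one-line argument for this kind of mediant-type fact, and the paper itself states Fact~\ref{fact:ineq} without proof precisely because this manipulation is immediate. There is nothing to add.
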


The following result will be very useful for us, creating an important connection between types of balancedness and $2$-connectivity.

\begin{lem}\label{lemma:2connected}
Let $H_1$ and $H_2$ be graphs such that either (i) $m_2(H_1) > m_2(H_2) > 1$, $H_2$ is strictly $2$-balanced and $H_1$ is strictly balanced with respect to $d_2(\cdot, H_2)$; or (ii) $m_2(H_1) = m_2(H_2) > 1$ and $H_1$ and $H_2$ are both strictly $2$-balanced. Then $H_1$ and $H_2$ are both $2$-connected.
\end{lem}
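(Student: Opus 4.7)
The plan is to handle $H_1$ and $H_2$ separately and argue by contradiction: in each case, if the graph $H$ under consideration is not $2$-connected, then $E(H)$ decomposes as the disjoint union of the edge sets of two proper non-empty subgraphs $J$ and $J'$ with $|V(J) \cap V(J')| = s \in \{0,1\}$. The strategy is to write the strict balancedness hypothesis as a strict inequality for each of $J$ and $J'$, sum them, and compare with the equality forced by $d_2(H) = m_2(H)$ (respectively $d_2(H_1, H_2) = m_2(H_1, H_2)$); the hypothesis $m_2(H_2) > 1$ will be exactly what is needed to turn the resulting inequality into a contradiction.

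For $H_2$ (and, in case (ii), also for $H_1$): set $C := m_2(H_2) > 1$ and write $a := e(J),\ b := v(J),\ a' := e(J'),\ b' := v(J')$. Assuming first the generic situation $v(J), v(J') \geq 3$, strict $2$-balancedness gives $a - 1 < C(b - 2)$ and $a' - 1 < C(b' - 2)$, while $d_2(H_2) = C$ reads $a + a' - 1 = C(b + b' - s - 2)$. Summing the two strict inequalities and substituting yields $C(2 - s) < 1$, which contradicts $C > 1$ since $s \in \{0, 1\}$. The edge cases where one or both of $J, J'$ is isomorphic to $K_2$ (so that $v - 2 = 0$ and the generic inequality degenerates) are handled by a short direct computation: using $d_2(K_2) = 1/2 < C$ and expressing $d_2(H_2)$ in closed form again forces $C \leq 1$.

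For $H_1$ in case (i): let $\alpha := 1/m_2(H_2) \in (0, 1)$ and $C := m_2(H_1, H_2)$, so that $d_2(J, H_2) = a/(b - 2 + \alpha)$ whenever $b \geq 2$. In the generic case $v(J), v(J') \geq 2$ with $J, J' \not\cong K_2$, strict balancedness of $H_1$ with respect to $d_2(\cdot, H_2)$ gives $a < C(b - 2 + \alpha)$ and $a' < C(b' - 2 + \alpha)$; summing and comparing with $a + a' = C(b + b' - s - 2 + \alpha)$ (which comes from $d_2(H_1, H_2) = C$) yields $s > 2 - \alpha$, and since $\alpha < 1$ this forces $s > 1$, contradicting $s \leq 1$. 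An isolated vertex in $H_1$ is ruled out directly: deleting it shrinks the denominator of $d_2(\cdot, H_2)$ without changing the numerator, violating the maximality of $d_2(H_1, H_2)$. The remaining possibility $J \cong K_2$ uses $d_2(J, H_2) = 1/\alpha = m_2(H_2) < C$ (the strict inequality being Proposition~\ref{prop:m2h1h2}), and an analogous direct computation again forces a contradiction.

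The bulk of the argument is routine algebra, and the main obstacle is merely bookkeeping: one must carefully track the degenerate cases where $J$ or $J'$ is $K_2$ or involves isolated vertices, since there the generic formulas vanish. Conceptually, the key point is that $m_2(H_2) > 1$ (equivalently $\alpha < 1$) is exactly the threshold that forces the comparison $s > 2 - \alpha$ (or $C(2-s) < 1$ in the $H_2$ setting) to be incompatible with $s \leq 1$, which is the only obstruction to $2$-connectivity.
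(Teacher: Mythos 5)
Your proposal is correct and follows essentially the same approach as the paper. The only difference is presentational: the paper cites \cite[Lemma~3.3]{ns} for the $2$-connectedness of strictly $2$-balanced graphs (covering case (ii) and $H_2$ in case (i)) and only spells out the adapted argument for $H_1$ in case (i), whereas you rederive the $2$-balanced case directly via the same summation-over-a-cut argument (summing the strict density inequalities for the two pieces and comparing against the overall balance equality, with $m_2(H_2)>1$ forcing the contradiction), treating disconnection ($s=0$) and cut vertices ($s=1$) uniformly.
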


\begin{proof}
By \cite[Lemma~3.3]{ns}, strictly $2$-balanced graphs are $2$-connected, hence $(ii)$ holds and $H_2$ is $2$-connected in case $(i)$. We now use a very similar method to that of the proof of Lemma~$3.3$ to show that $H_1$ is $2$-connected in case $(i)$. 
Since $H_1$ is strictly balanced with respect to $d_2(\cdot, H_2)$, we have that $H_1$ is connected.
Indeed, assume not. Let $H_1$ have $k \geq 2$ components and denote the number of vertices and edges in each component by $u_1, \ldots, u_k$ and $d_1, \ldots, d_k$, respectively. Then since $H_1$ is strictly balanced with respect to $d_2(\cdot, H_2)$, we must have that \[\frac{\sum_{i=1}^k d_i}{\sum_{i=1}^k u_i - 2 + \frac{1}{m_2(H_2)}} > \frac{d_1}{u_1 - 2 + \frac{1}{m_2(H_2)}}\] and \[\frac{\sum_{i=1}^k d_i}{\sum_{i=1}^k u_i - 2 + \frac{1}{m_2(H_2)}} > \frac{\sum_{i=2}^k d_i}{\sum_{i=2}^k u_i - 2 + \frac{1}{m_2(H_2)}}.\] Since $m_2(H_2) > 1$, by Fact~\ref{fact:ineq}(i) we get that \[\frac{\sum_{i=1}^k d_i}{\sum_{i=1}^k u_i - 2 + \frac{1}{m_2(H_2)}} \geq \frac{\sum_{i=1}^k d_i}{\sum_{i=1}^k u_i - 4 + \frac{2}{m_2(H_2)}} > \frac{\sum_{i=1}^k d_i}{\sum_{i=1}^k u_i - 2 + \frac{1}{m_2(H_2)}},\] a contradiction. 
 
Assume $H_1$ is not $2$-connected. Then there exists a cut vertex\footnote{That is, removing $v$ and its incident edges from $H_1$ produces a disconnected graph.} $v \in V(H_1)$.
Further, using Proposition~\ref{prop:m2h1h2} alongside that $H_1$ is strictly balanced with respect to $d_2(\cdot, H_2)$ and $m_2(H_1) > m_2(H_2) > 1$, we can show that $H_1$ does not contain any vertex of degree 1.
Indeed, otherwise $\frac{e_1 - 1}{v_1 - 3 + \frac{1}{m_2(H_2)}} > \frac{e_1}{v_1 - 2 + \frac{1}{m_2(H_2)}} = m_2(H_1, H_2)$, contradicting that $H_1$ is strictly balanced with respect to $d_2(\cdot, H_2)$. 
Thus there exist subgraphs $J_1$ and $J_2$ of $H_1$ such that $|E(J_1)|, |E(J_2)| \geq 1$, $J_1 \cup J_2 = H_1$ and $V(J_1) \cap V(J_2) = \{v\}$. Using Fact~\ref{fact:ineq}(i) and that $H_1$ is strictly balanced with respect to $d_2(\cdot, H_2)$, we have that 
\begin{eqnarray*}e_1 = e_{J_1} + e_{J_2} & < & m_2(H_1, H_2)\left(v_{J_1} - 2 + \frac{1}{m_2(H_2)} + v_{J_2} - 2 + \frac{1}{m_2(H_2)}\right) \\ & = & m_2(H_1,H_2)\left(v_1 - 3 + \frac{2}{m_2(H_2)}\right).\end{eqnarray*}
However, since $m_2(H_2) > 1$ we also have that \[\frac{e_1}{v_1 - 3 + \frac{2}{m_2(H_2)}} > \frac{e_1}{v_1 - 2 + \frac{1}{m_2(H_2)}} = m_2(H_1, H_2),\] contradicting the inequality above. Hence $H_1$ is $2$-connected.\end{proof}

\section{Algorithm for computing valid edge-colourings: {\sc Asym-Edge-Col}}\label{sec:asymedgecol}

To prove Theorem~\ref{thm:main}, we can clearly assume $H_1$ and $H_2$ are non-empty graphs satisfying the criteria of Conjecture~\ref{conj:aconj} and that Conjecture~\ref{conj:aconj} itself holds. 
Suppose $G = G_{n,p}$ and $p \leq bn^{-1/m_2(H_1, H_2)}$ where $b$ will be a small constant defined later. 
As noted earlier, to prove Conjecture~\ref{conj:kk} we can show that a.a.s.~$G$ has a valid edge-colouring for $H_1$ and $H_2$.
We construct our valid edge-colouring using an algorithm \textsc{Asym-Edge-Col} (see Figure~\ref{asymedgecolfig}). 
In order to state the algorithm succinctly, we need to define a considerable amount of notation, almost all of which we keep very similar to that in \cite{msss}. 

Recall Definition~\ref{def:c*}. In particular, recall that for any graph $G$ we define the families \[\mathcal{R}_G := \{R \subseteq G : R \cong H_1\} \ \mbox{and}\ \mathcal{L}_G := \{L \subseteq G : L \cong H_2\}\] of all copies of $H_1$ and $H_2$ in $G$, respectively.
Also, recall the family \[\mathcal{L}^*_G := \{L \in \mathcal{L}_G : \forall e \in E(L) \ \exists R \in \mathcal{R}_G \ \mbox{s.t.} \ E(L) \cap E(R) = \{e\}\}.\]
We highlight here that if $E(L) \cap E(R) = \{e\}$ for some $L \in \mathcal{L}_G$ and $R \in \mathcal{R}_G$ then it is still possible that $|V(L) \cap V(R)| > 2$.

Recall Definition~\ref{def:a*}. Intuitively, the graphs in $\hat{\mathcal{A}}$ are the building blocks of the graphs $\hat{G}$ which may remain after the edge deletion process in \textsc{Asym-Edge-Col} (described later). 

For any graph $G$, define \[\mathcal{S}_G := \{S \subseteq G: S \cong A \in \hat{\mathcal{A}} \land \nexists S' \supset S \ \mbox{with} \ S' \subseteq G, \ S' \cong A' \in \hat{\mathcal{A}}\},\] that is, the family $\mathcal{S}_G$ contains all maximal subgraphs of $G$ isomorphic to a member of $\hat{\mathcal{A}}$. 
Hence, there are no two members $S_1, S_2 \in \mathcal{S}_G$ such that $S_1 \subsetneq S_2$. For any edge $e \in E(G)$, let \[\mathcal{S}_G(e) := \{S \in \mathcal{S}_G : e \in E(S)\}.\]
\begin{define}\label{def:agraph}
    We call $G$ an \emph{$\hat{\mathcal{A}}$-graph} if, for all $e \in E(G)$, we have \[|\mathcal{S}_G(e)| = 1.\] 
\end{define}
In particular, an $\hat{\mathcal{A}}$-graph is an edge-disjoint union of graphs from $\hat{\mathcal{A}}$.
In an $\hat{\mathcal{A}}$-graph $G$, a copy of $H_1$ or $H_2$ can be a subgraph of $G$ in two particular ways: either it is a subgraph of an $S \in \mathcal{S}_G$ or it is a subgraph with edges in at least two different graphs from $\mathcal{S}_G$. 
The former we call \emph{trivial} copies of $H_1$ and $H_2$, and we define \[\mathcal{T}_G := \left\{T \subseteq G : (T \cong H_1 \vee T \cong H_2) \land \left|\bigcup_{\substack{e \in E(T)}}\mathcal{S}_G(e)\right| \geq 2\right\}\] to be the family of all \emph{non-trivial} copies of $H_1$ and $H_2$ in $G$.
\begin{define}\label{def:h1h2sparse}
    We say that a graph $G$ is \emph{$(H_1, H_2)$-sparse} if $\mathcal{T}_G = \emptyset$.\label{page:h1h2sparse}
\end{define} 
Our next lemma asserts that $(H_1, H_2)$-sparse $\hat{\mathcal{A}}$-graphs are easily colourable, provided Conjecture~\ref{conj:aconj} holds.

\begin{lem}\label{lemma:acolour}
There exists a procedure \textsc{A-Colour} that returns for any $(H_1, H_2)$-sparse $\hat{\mathcal{A}}$-graph $G$ a valid edge-colouring for $H_1$ and $H_2$.
\end{lem}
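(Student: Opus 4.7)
The plan is to define \textsc{A-Colour} to operate piecewise on the maximal pieces $S \in \mathcal{S}_G$ and then to glue the resulting colourings together. First, I would exploit the fact that $G$ is an $\hat{\mathcal{A}}$-graph: Definition~\ref{def:agraph} guarantees $|\mathcal{S}_G(e)| = 1$ for every $e \in E(G)$, so the edge sets $\{E(S) : S \in \mathcal{S}_G\}$ partition $E(G)$. Any red/blue edge-colouring of $G$ can therefore be assembled by colouring each piece $S$ independently and taking the union.

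Next, I would invoke Conjecture~\ref{conj:aconj} (which we are assuming holds for $H_1$ and $H_2$) to colour each piece. By definition of $\mathcal{S}_G$, each $S \in \mathcal{S}_G$ is isomorphic to some $A \in \hat{\mathcal{A}}$, and Conjecture~\ref{conj:aconj} tells us that $\hat{\mathcal{A}}$ is finite and that every $A \in \hat{\mathcal{A}}$ admits a valid edge-colouring for $H_1$ and $H_2$. Thus \textsc{A-Colour} may precompute a valid colouring for every $A \in \hat{\mathcal{A}}$ in a finite lookup table, and for each $S \in \mathcal{S}_G$ transport the stored colouring along an isomorphism $S \cong A$. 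Taking the union over all $S \in \mathcal{S}_G$ yields a well-defined red/blue colouring of $E(G)$.

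Finally, I would verify validity by contradiction. Suppose this colouring produces a red copy of $H_1$ or a blue copy of $H_2$; call it $T$. Since $G$ is $(H_1, H_2)$-sparse we have $\mathcal{T}_G = \emptyset$, so $T \notin \mathcal{T}_G$, which forces $\left|\bigcup_{e \in E(T)} \mathcal{S}_G(e)\right| = 1$. Hence all edges of $T$ lie inside a single $S \in \mathcal{S}_G$, and $T$ is in fact a monochromatic copy of $H_1$ or $H_2$ inside $S$ under the colouring of $S$ produced by the lookup table. But that colouring was chosen to be valid for $H_1$ and $H_2$, a contradiction.

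The argument is essentially a bookkeeping exercise: the $\hat{\mathcal{A}}$-graph condition supplies the partition needed to localise the colouring, and Conjecture~\ref{conj:aconj} supplies the local colourings themselves. The only subtlety, rather than a genuine obstacle, is recognising that the $(H_1, H_2)$-sparse hypothesis is exactly what rules out the cross-piece monochromatic copies that could otherwise escape the per-$S$ validity; once this is noticed, the proof writes itself.
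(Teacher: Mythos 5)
Your proof is correct and follows essentially the same approach as the paper: partition $E(G)$ using the $\hat{\mathcal{A}}$-graph property, colour each maximal piece $S \in \mathcal{S}_G$ locally via the valid colourings guaranteed by Conjecture~\ref{conj:aconj}, and use $(H_1,H_2)$-sparsity to rule out cross-piece monochromatic copies. One small remark: you invoke the finiteness of $\hat{\mathcal{A}}$ for the lookup table, but the paper explicitly notes this is not needed here --- only the existence of a valid colouring for each $A \in \hat{\mathcal{A}}$ is used; finiteness is reserved for the later Claims~\ref{claim:conclusion} and \ref{claim:conclusionsym}.
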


\begin{proof} 
By Conjecture~\ref{conj:aconj}, there exists a valid edge-colouring for $H_1$ and $H_2$ of every $A \in \hat{\mathcal{A}}$. Using this we define a procedure \textsc{A-Colour}$(G)$ as follows:
Assign a valid edge-colouring for $H_1$ and $H_2$ to every subgraph $S \in \mathcal{S}_G$ locally, that is, regardless of the structure of $G$. 
Since $G$ is an $(H_1, H_2)$-sparse $\hat{\mathcal{A}}$-graph, we assign a colour to each edge of $G$ without producing a red copy of $H_1$ or a blue copy of $H_2$, and the resulting colouring is a valid edge-colouring for $H_1$ and $H_2$ of $G$.\end{proof}

Note that we did not use that $\hat{\mathcal{A}}$ is finite, as given by Conjecture~\ref{conj:aconj}, in our proof of Lemma~\ref{lemma:acolour}, only that `every graph in $\hat{\mathcal{A}}$ has a valid edge-colouring for $H_1$ and $H_2$'. 
The finiteness of $\hat{\mathcal{A}}$ will be essential later for the proofs of Claims~\ref{claim:conclusion} and \ref{claim:conclusionsym}.

\begin{figure}
\begin{algorithmic}[1]
\Procedure{\sc Asym-Edge-Col}{$G = (V,E)$}
    \State $s\gets$ {\sc empty-stack}()
    \State $E'\gets E$
    \State $\mathcal{L}\gets \mathcal{L}_G$
    \While{$G' = (V, E')$ is not $(H_1, H_2)$-sparse or not an $\hat{\mathcal{A}}$-graph}\label{line:while1start}
            \If{$\exists e \in E' \ \mbox{s.t.} \ \nexists (L,R) \in \mathcal{L} \times \mathcal{R}_{G'}: E(L) \cap E(R) = \{e\}$}\label{line:edgeremoval}
                \ForAll {$L \in \mathcal{L}: e \in E(L)$}\label{line:Lall1}
                    \State $s$.{\sc push}($L$)\label{line:Lpush1}
                \State $\mathcal{L}$.{\sc remove}($L$) \label{line:Lremove1}
                \EndFor \label{line:Lend1}
            \State $s$.{\sc push}($e$)\label{line:edgepush}
            \State $E'$.{\sc remove}($e$)\label{line:edgeremove}
            \Else
                \If{$\exists L \in \mathcal{L}: \ \exists e \in E(L) \ \mbox{s.t.} \ \nexists R \in \mathcal{R}_{G'} \ \mbox{with} \ E(L) \cap E(R) = \{e\} $}\label{line:L*check}
                    \State $s$.{\sc push}($L$)\label{line:Lpush2}
                    \State $\mathcal{L}$.{\sc remove}($L$)\label{line:Lend2}
                \Else
                    \State {\bf error} {``stuck"}\label{line:error}
                \EndIf
            \EndIf
    \EndWhile\label{line:while1end}
    \State {\sc A-colour}($G' = (V,E')$)\label{line:acolourcall}
    \While{$s \neq \emptyset$}\label{line:while2start}
        \If{$s$.{\sc top}() is an edge}\label{line:edgeif}
            \State $e \gets s$.{\sc pop}()\label{line:edgepop}
            \State $E'$.{\sc add}($e$)\label{line:edgeadd}
            \State $e$.{\sc set-colour}(blue)\label{line:edgeblue}
        \Else
            \State $L \gets s$.{\sc pop}()\label{line:Lpop}
            \If{$L$ is entirely blue}\label{line:colourswapstart}
                \State $f \gets$ any $e \in E(L)$ s.t. $\nexists R \in \mathcal{R}_{G'}: E(L) \cap E(R) = \{e\}$\label{line:getf}
                \State $f$.{\sc set-colour}(red)\label{line:colourswapend}
                
            \EndIf
        \EndIf
    \EndWhile \label{line:while2end}
\EndProcedure
\end{algorithmic}
\caption{The implementation of algorithm \textsc{Asym-Edge-Col}.}\label{asymedgecolfig}
\end{figure}

Now let us describe the algorithm \textsc{Asym-Edge-Col} which if successful outputs a valid edge-colouring of $G$. 
In \textsc{Asym-Edge-Col}, edges are removed from and then inserted back into a working copy $G'~=~(V, E')$ of $G$.
Each edge is removed in the first while-loop only when it is not the unique intersection of the edge sets of some copy of $H_1$ and some copy of $H_2$ in $G'$ (line~\ref{line:edgeremoval}). 
It is then `pushed\footnote{For clarity, by `push' we mean that the object is placed on the top of the stack $s$.}' onto a stack $s$ such that when we reinsert edges (in reverse order) in the second while-loop we can colour them to construct a valid edge-colouring for $H_1$ and $H_2$ of $G$;
if at any point $G'$ is an $(H_1, H_2)$-sparse $\hat{\mathcal{A}}$-graph, then we combine the colouring of these edges with a valid edge-colouring for $H_1$ and $H_2$ of $G'$ provided by \textsc{A-Colour}. 
We also keep track of the copies of $H_2$ in $G$ and push abstract representations of some of them (or all of them if $G'$ is never an $(H_1, H_2)$-sparse $\hat{\mathcal{A}}$-graph during \textsc{Asym-Edge-Col}) onto $s$ (lines~\ref{line:Lpush1} and \ref{line:Lpush2}) to be used later in the colour swapping stage of the second while-loop (lines~\ref{line:colourswapstart}-\ref{line:colourswapend}). 

Let us consider algorithm \textsc{Asym-Edge-Col} in detail. In line~\ref{line:while1start}, we check whether $G'$ is an $(H_1, H_2)$-sparse $\hat{\mathcal{A}}$-graph or not.
If not, then we enter the first while-loop. In line~\ref{line:edgeremoval}, we choose an edge $e$ which is not the unique intersection of the edge sets of some copy of $H_1$ and some copy of $H_2$ in $G'$ (if such an edge $e$ exists).
Then in lines~\ref{line:Lall1}-\ref{line:edgeremove} we push each copy of $H_2$ in $G'$ that contains $e$ onto $s$ before pushing $e$ onto $s$ as well.
Now, if every edge $e \in E'$ is the unique intersection of the edge sets of some copy of $H_1$ and some copy of $H_2$ in $G'$,
then we push onto $s$ a copy $L$ of $H_2$ in $G'$ which contains an edge that is not the unique intersection of the edge set of $L$ and the edge set of some copy of $H_1$ in $G'$.
If no such copies $L$ of $H_2$ exist, then the algorithm has an error in line~\ref{line:error}.
If \textsc{Asym-Edge-Col} does not run into an error, then we enter the second while-loop with input $G'$.
Observe that $G'$ is either the empty graph on vertex set $V$ or some $(H_1, H_2)$-sparse $\hat{\mathcal{A}}$-graph.
By Lemma~\ref{lemma:acolour}, $G'$ has a valid edge-colouring for $H_1$ and $H_2$.
The second while-loop successively removes edges (line~\ref{line:edgepop}) and copies of $L$ (line~\ref{line:Lpop}) from $s$ in the reverse order in which they were added onto $s$, with the edges added back into $E'$.
Each time an edge is added back it is coloured blue, and if a monochromatic blue copy $L$ of $H_2$ is constructed, we make  one of the edges of $L$ red (lines~\ref{line:colourswapstart}-\ref{line:colourswapend}).
This colouring process is then repeated until we have a valid edge-colouring for $H_1$ and $H_2$ of $G$. 

The following lemma confirms that our colouring process in the second while-loop produces a valid edge-colouring for $H_1$ and $H_2$ of $G$. 

\begin{lem}\label{lemma:errorvalid}
Algorithm \textsc{Asym-Edge-Col} either terminates with an error in line~\ref{line:error} or finds a valid edge-colouring for $H_1$ and $H_2$ of $G$.
\end{lem}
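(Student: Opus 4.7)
The plan is to establish termination and then correctness. Each non-error iteration of the first \textbf{while}-loop on line~\ref{line:while1start} strictly decreases the quantity $|E'| + |\mathcal{L}|$: if it takes the branch starting at line~\ref{line:edgeremoval} then $|E'|$ decreases via line~\ref{line:edgeremove}; otherwise it takes the branch at line~\ref{line:L*check} and $|\mathcal{L}|$ decreases via line~\ref{line:Lend2}. Since $|E'| + |\mathcal{L}|$ is a bounded non-negative integer, the first loop terminates. The second \textbf{while}-loop pops $s$ at every iteration and never pushes, so it terminates too. In the non-error case, the first loop exits with $G'$ being an $(H_1, H_2)$-sparse $\hat{\mathcal{A}}$-graph, so Lemma~\ref{lemma:acolour} supplies a valid edge-colouring of this $G'$ as the seed for the colouring phase.

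The crux of the correctness argument is a \emph{pop-equals-push} observation: for any item $X$ (an edge of $G$ or a copy $L \in \mathcal{L}_G$) placed on $s$, the edge set $E'$ at the moment $X$ is popped in the second loop coincides with $E'$ at the moment $X$ was pushed in the first loop. LIFO discipline forces every item placed on $s$ after $X$ to be processed (and, if an edge, restored to $E'$) before $X$ itself, while no item placed earlier has yet been touched; since edges leave and re-enter $E'$ only in lockstep with the stack, the claim follows.

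I would then maintain two invariants throughout the second loop: (I) the current colouring contains no red copy of $H_1$, and (II) every blue copy of $H_2$ in the current $G' = (V, E')$ corresponds to some $L \in \mathcal{L}_G$ still present on $s$. Both hold initially by validity of \textsc{A-Colour}. Popping an edge $e$ and colouring it blue introduces no red edge, hence no red $H_1$; any newly blue copy $L^*$ of $H_2$ must contain $e$, and since throughout the first loop $\mathcal{L}$ consists entirely of copies all of whose edges remain in $E'$, such an $L^*$ was necessarily pushed onto $s$ no later than $e$ itself and hence is still on $s$, preserving (II). Popping an $L$ that is not entirely blue changes nothing.

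Suppose instead that an entirely blue $L$ is popped. The pop-equals-push observation gives that $\mathcal{R}_{G'}$ at pop-time equals $\mathcal{R}_{G'}$ at push-time, and by the push condition (line~\ref{line:edgeremoval} or line~\ref{line:L*check}) some $e \in E(L)$ at that moment satisfied $\nexists R \in \mathcal{R}_{G'}\colon E(L) \cap E(R) = \{e\}$; this $e$ is a valid choice for $f$ on line~\ref{line:getf}. Recolouring $f$ red cannot create a red $H_1$: any $R \in \mathcal{R}_{G'}$ through $f$ must share some further edge $g \neq f$ with $L$ (else $f$ would be the unique intersection of $E(R)$ with $E(L)$, contradicting the choice of $f$), and $g$ remains blue because $L$ was entirely blue. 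The swap converts only blue into red, so no new blue copy of $H_2$ appears and $L$ itself is no longer entirely blue, leaving (I) and (II) intact. When $s$ empties, $E' = E$ and the two invariants together yield a valid edge-colouring of $G$. The main delicacy is the pop-equals-push observation, on which both the existence of $f$ and the tracking of newly blue copies of $H_2$ rest.
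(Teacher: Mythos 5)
Your proof is correct and takes essentially the same route as the paper: the pop-equals-push observation is precisely what the paper uses to justify the existence of $f$ in line~\ref{line:getf}, and your invariant (II) is a cleaner, more explicit formalisation of the paper's argument that every copy of $H_2$ not lying in $G^*$ is pushed onto the stack and is checked only after all its edges are re-inserted. You also add an explicit termination argument (via $|E'|+|\mathcal{L}|$) which the paper leaves implicit; otherwise the two proofs are the same.
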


\begin{proof}
Our proof is almost identical to the proof of Lemma 11 in \cite{msss}. We include it here for completeness.

Let $G^*$ denote the argument in the call to \textsc{A-Colour} in line~\ref{line:acolourcall}.
By Lemma~\ref{lemma:acolour}, there is a valid edge-colouring for $H_1$ and $H_2$ of $G^*$. 
It remains to show that no forbidden monochromatic copies of $H_1$ or $H_2$ are created when this colouring is extended to a colouring of $G$ in lines~\ref{line:while2start}-\ref{line:while2end}. 

Firstly, we argue that the algorithm never creates a blue copy of $H_2$. Observe that \emph{every} copy of $H_2$ that does not lie entirely in $G^*$ is pushed on the stack in the first while-loop (lines~\ref{line:while1start}-\ref{line:while1end}). 
Therefore, in the execution of the second loop, the algorithm checks the colouring of every such copy.
By the order of the elements on the stack, each such test is performed only after all edges of the corresponding copy of $H_2$ were inserted and coloured. 
For every blue copy of $H_2$, one particular edge $f$ (see line~\ref{line:getf}) is recoloured to red. Since red edges are never flipped back to blue, no blue copy of $H_2$ can occur.

We need to show that the edge $f$ in line~\ref{line:getf} always exists. 
Since the second loop inserts edges into $G'$ in the reverse order in which they were deleted during the first loop, when we select $f$ in line~\ref{line:getf}, $G'$ has the same structure as at the time when $L$ was pushed on the stack. 
This happened either in line~\ref{line:Lpush1} when there exists no copy of $H_1$ in $G'$ whose edge set intersects with $L$ on some particular edge $e \in E(L)$, or in line~\ref{line:Lpush2} when $L$ is not in $\mathcal{L}^*_{G'}$ due to the if-clause in line~\ref{line:L*check}. 
In both cases we have $L \notin \mathcal{L}^*_{G'}$, and hence there exists an edge $e \in E(L)$ such that the edge sets of all copies of $H_1$ in $G'$ do not intersect with $L$ exactly in $e$.

It remains to prove that changing the colour of some edges from blue to red by the algorithm never creates an entirely red copy of $H_1$.
By the condition on $f$ in line~\ref{line:getf} of the algorithm, at the moment $f$ is recoloured there exists no copy of $H_1$ in $G'$ whose edge set intersects $L$ exactly in $f$.
So there is either no copy of $H_1$ containing $f$ at all, or every such copy contains also another edge from $L$.
In the latter case, those copies cannot become entirely red since $L$ is entirely blue.\end{proof}

To prove Theorem~\ref{thm:main}, it now suffices to prove the following lemma.

\begin{lem}\label{lemma:noerror}
There exists a constant $b = b(H_1,H_2) > 0$ such that for $p \leq bn^{-1/m_2(H_1,H_2)}$ algorithm \textsc{Asym-Edge-Col} terminates on $G_{n,p}$ without error a.a.s.
\end{lem}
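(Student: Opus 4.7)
The plan is to prove Lemma~\ref{lemma:noerror} by contradiction, following the strategy outlined in Section~\ref{sec:proofsketch}. Suppose that with probability bounded away from zero the algorithm \textsc{Asym-Edge-Col} errors on $G = G_{n,p}$. Then there is a subgraph $G' \subseteq G$ (the state of the working copy at the moment of failure) which is neither $(H_1,H_2)$-sparse nor an $\hat{\mathcal{A}}$-graph, yet every edge of $G'$ is the unique edge-intersection of some copy of $H_1$ with some copy of $H_2$, and every copy of $H_2$ still on the working list lies in $\mathcal{L}^*_{G'}$. In particular $G' \in \mathcal{C}^*(H_1, H_2)$ and $G'$ contains a copy of $H_1$. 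The two cases $m_2(H_1) > m_2(H_2)$ and $m_2(H_1) = m_2(H_2)$ are handled separately; I focus on the former, which is harder.

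First, I would design an auxiliary algorithm \textsc{Grow} that, given $G'$, iteratively builds a subgraph $F \subseteq G'$ starting from a copy of $H_1$. In each iteration \textsc{Grow} either appends another copy of $H_1$ to $F$ along a shared edge, or appends a ``flower'' consisting of a copy $L$ of $H_2$ together with copies of $H_1$ at each edge of $L$, certified by $L \in \mathcal{L}^*_{G'}$. \textsc{Grow} terminates once the density tracker $\lambda(F) := v(F) - e(F)/m_2(H_1,H_2)$ drops below $-\gamma$, where $\gamma = \gamma(H_1, H_2, \varepsilon) > 0$ is a constant chosen relative to the $\varepsilon > 0$ supplied by Conjecture~\ref{conj:aconj}. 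The first claim to establish is that \textsc{Grow} always terminates with a well-defined output $F \subseteq G'$; the key input here is that any $2$-connected subgraph of $G'$ which does not belong to $\hat{\mathcal{A}}$ has density exceeding $m_2(H_1,H_2) + \varepsilon$, so iteratively growing the flower must eventually push $\lambda$ below $-\gamma$.

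The core of the argument is then an accounting of how $\lambda(F)$ evolves. I would declare an iteration \emph{non-degenerate} if it attaches a clean flower (the copy of $H_2$ overlaps $F$ only in a single edge, and the appended copies of $H_1$ overlap $F$ and one another only in the distinguished edges of $L$), and \emph{degenerate} otherwise (including all iterations that attach only a copy of $H_1$, and any flower iteration where $L$ or one of its petals is folded into $F$ or into itself). A non-degenerate flower contributes new vertices and edges in precisely the ratio $m_2(H_1, H_2)$, as the calculation in Section~\ref{sec:ainc*orinc} shows, so $\lambda(F)$ is unchanged. The central technical claim is that a degenerate iteration strictly decreases $\lambda(F)$ by a positive constant depending only on $H_1$ and $H_2$. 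Granted this, only $O_\gamma(1)$ degenerate steps can occur before termination, so the output is essentially determined by a bounded amount of ``local'' combinatorial data (up to the choice of attachment vertices), yielding a family $\mathcal{F}$ of possible outputs of size at most polylogarithmic in $n$ with every $F \in \mathcal{F}$ satisfying $\lambda(F) < -\gamma$. A first-moment computation then gives $\mathbb{E}[\#\text{copies of } F \text{ in } G_{n,p}] = O(b^{e(F)} n^{\lambda(F)}) = o(1)$ for $b$ sufficiently small, and a union bound over $\mathcal{F}$ contradicts the assumption that \textsc{Asym-Edge-Col} errors with positive probability.

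The hard part, as flagged in Section~\ref{sec:proofsketch}, is proving the constant-size drop in $\lambda$ for degenerate \emph{flower} iterations, where the flower may overlap itself or $F$ in subtle ways. I would separate the ``flower folded into $F$'' case from the ``flower folded into itself'' case; the latter is the genuinely novel contribution and requires the analogue of Lemma~\ref{lemma:21}, comparing the vertex/edge balance of a folded flower to that of a clean attachment. The proof there would be by Fact~\ref{fact:ineq} together with the strict balancedness hypotheses of Conjecture~\ref{conj:aconj}: $d_2(J, H_2) < m_2(H_1,H_2)$ for proper $J \subsetneq H_1$ provides a positive gap whenever a petal is folded, while $d_2(J') < m_2(H_2)$ for proper $J' \subsetneq H_2$ plays the same role when $L$ itself is folded (with Lemma~\ref{lemma:2connected} ensuring that $H_1$ and $H_2$ are $2$-connected so these subgraph inequalities can be invoked at the natural cut structures). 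Finally, the $m_2(H_1) = m_2(H_2)$ case is handled by a parallel but lighter argument using an alternative algorithm \textsc{Grow-Alt}: here Proposition~\ref{prop:m2h1h2} gives $m_2(H_1) = m_2(H_1,H_2) = m_2(H_2)$, the flower-overlap subtleties collapse, and only $H_1$-attachments are needed, making the density bookkeeping essentially mechanical.
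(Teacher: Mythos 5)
Your overall architecture matches the paper's: you run a growing procedure seeded from a copy of $H_1$, track $\lambda(F)$, show that non-degenerate flower iterations leave $\lambda$ unchanged while degenerate iterations drop it by a constant, bound the number of possible outputs, and close with a first-moment computation; the split into a harder unbalanced case (with \textsc{Grow} and a technical lemma comparing folded to unfolded flowers) and a lighter balanced case (with \textsc{Grow-Alt}) is also as in the paper. However, two steps in your sketch would not go through as written.

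First, you claim that $\mathcal{F}$ has polylogarithmic size and that every $F\in\mathcal{F}$ satisfies $\lambda(F)<-\gamma$; neither holds. If \textsc{Grow} terminated only when $\lambda$ fell below $-\gamma$ it would not be guaranteed to halt, since a long run of non-degenerate iterations leaves $\lambda$ unchanged. A second cap on the number of iterations (in the paper, $i\geq\ln n$) is needed, and it produces a second family $\mathcal{F}_1$ of outputs with $e(F)\geq\ln n$ but $\lambda(F)$ possibly close to $\lambda(F_0)>0$; for these the first moment is killed by $b^{e(F)}$, not by $n^{\lambda(F)}$. Moreover, even with $d\leq q_1$ degenerate iterations, each of the $t-d$ non-degenerate iterations has a constant number $A>1$ of outcomes (the choice of edge and orientation within the flower), so after $t$ up to $\ln n$ iterations the family size is roughly $A^{t}$, i.e.\ polynomial in $n$, not polylogarithmic. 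The union bound only closes because $b$ is chosen small enough relative to $A$ to cancel this exponential growth; see Claims~\ref{claim:polylog} and~\ref{claim:conclusion}.

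Second, your account of the proof of the key Lemma~\ref{lemma:21} names the right hypotheses (Fact~\ref{fact:ineq}, strict balancedness, $2$-connectivity) but omits the mechanism that makes them bite. The paper does not simply invoke strict balancedness at a cut: it introduces an ordering of the inner edges via a purpose-built algorithm \textsc{Order-Edges}, which groups the inner edges into subgraphs $A_i\subsetneq H_2$ together with leftover isolated edges, and then exploits the observation that when a petal $H_f$ is folded one or both endpoints of $f$ may be isolated in the overlap graph $T(f)$, so that $|f\cap V(T'(f))|<2$; such edges are ``good'' and, via the strict $2$-balancedness of $H_2$ applied to each $A_i$, compensate for the ``bad'' edges with $|f\cap V(T'(f))|=2$. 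Without this decomposition and the good/bad bookkeeping, strict balancedness alone does not yield the claimed inequality between external densities. As a smaller point, in the balanced case \textsc{Grow-Alt} attaches single copies of \emph{both} $H_1$ and $H_2$ (whichever of the pair witnessing $G'\in\mathcal{C}$ is not already inside $F_i$), not only copies of $H_1$; otherwise one cannot guarantee $e(F_{i+1})>e(F_i)$.
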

We split our proof of Lemma~\ref{lemma:noerror} into two cases: (1) when $m_2(H_1) > m_2(H_2)$ and (2) when $m_2(H_1) = m_2(H_2)$.
Notice that this accords with our definition of $\hat{\mathcal{A}}$. 

\section{Case~1: $m_2(H_1) > m_2(H_2)$.}\label{sec:grow}

We will prove Case~1 of Lemma~\ref{lemma:noerror} using an auxiliary algorithm \textsc{Grow} (see Figure~\ref{growfig}).
If \textsc{Asym-Edge-Col} has an error, then \textsc{Grow} computes a subgraph $F \subseteq G$ which is either too large in size or too dense to appear in $G_{n,p}$ a.a.s.~(with $p$ as in Lemma~\ref{lemma:noerror}).
Indeed, letting $\mathcal{F}$ be the class of all graphs that can possibly be returned by \textsc{Grow}, we will show that the expected number of copies of graphs from $\mathcal{F}$ contained in $G_{n,p}$ is $o(1)$, which with Markov's inequality implies that $G_{n,p}$ a.a.s.~contains no graph from $\mathcal{F}$. 
This in turn implies Lemma~\ref{lemma:noerror} by contradiction. Note that algorithm \textsc{Grow} is only used for proving Lemma~\ref{lemma:noerror} and hence does not add anything on to the run-time of \textsc{Asym-Edge-Col}. 

To state \textsc{Grow} we require the following definitions. Let \begin{equation}\label{eq:gamma}\gamma = \gamma(H_1,H_2) := \frac{1}{m_2(H_1, H_2)} - \frac{1}{m_2(H_1, H_2) + \varepsilon(H_1, H_2)} > 0,\end{equation} where $\varepsilon(H_1, H_2)$ is the constant in Conjecture~\ref{conj:aconj}.
Recall that for any graph $F$, we have \[\lambda(F) := v(F) - \frac{e(F)}{m_2(H_1,H_2)}\] and that 
this definition is motivated by the fact that the expected number of copies of $F$ in $G_{n,p}$ with $p = bn^{-1/m_2(H_1,H_2)}$ has order of magnitude \[n^{v(F)}p^{e(F)} = b^{e(F)}n^{\lambda(F)}.\] Also, recall that \[\mathcal{T}_G := \left\{T \subseteq G : (T \cong H_1 \vee T \cong H_2) \land \left|\bigcup_{\substack{e \in E(T)}}\mathcal{S}_G(e)\right| \geq 2\right\}.\]
For any graph $F$ and edge $e \in E(F)$, we say that $e$ is \emph{eligible for extension in \textsc{Grow}} if it satisfies \[\nexists L \in \mathcal{L}^*_F \ \mbox{s.t.} \ e \in E(L),\] and observe that $F$ is in $\mathcal{C}^*$ (see Definition~\ref{def:c*}) if and only if it contains no edge that is eligible for extension in \textsc{Grow}.

Algorithm \textsc{Grow} has as input the graph $G' \subseteq G$ that \textsc{Asym-Edge-Col} got stuck on. 
Let us consider the properties of $G'$ when \textsc{Asym-Edge-Col} got stuck. Because the condition in line~\ref{line:edgeremoval} of \textsc{Asym-Edge-Col} fails, $G'$ is in the family $\mathcal{C}$, where we recall \[\mathcal{C} = \mathcal{C}(H_1, H_2) := \{G = (V,E) : \forall e \in E\ \exists(L,R) \in \mathcal{L}_G \times \mathcal{R}_G\ \mbox{s.t.}\ E(L) \cap E(R) = \{e\}\}.\]
In particular, every edge of $G'$ is contained in a copy $L \in \mathcal{L}_{G'}$ of $H_2$, and, because the condition in line~\ref{line:L*check} fails, we can assume in addition that $L$ belongs to $\mathcal{L}^*_{G'}$.
Hence, $G'$ is actually in the family $\mathcal{C}^* = \mathcal{C}^*(H_1, H_2)$ where we recall \[\mathcal{C}^* = \mathcal{C}^*(H_1, H_2) := \{G = (V,E): \forall e \in E \ \exists L \in \mathcal{L}^*_{G} \ \mbox{s.t.} \ e \in E(L)\}.\]
Lastly, $G'$ is not $(H_1, H_2)$-sparse or not an $\hat{\mathcal{A}}$-graph because \textsc{Asym-Edge-Col} ended with an error.

We now outline algorithm \textsc{Grow}. Firstly, \textsc{Grow} checks whether either of two special cases occur (lines~\ref{line:growspecial1}-\ref{line:endif}). 
The first case corresponds to when $G'$ is an $\hat{\mathcal{A}}$-graph, which is not $(H_1, H_2)$-sparse as it is a graph that \textsc{Asym-Edge-Col} got stuck on.
The second case happens if there are $2$ graphs in $\mathcal{S}_{G'}(e)$ that overlap in (at least) the edge $e$. 
The outputs of these two special cases are graphs which the while loop of \textsc{Grow} could get stuck on. Indeed, if neither of these cases happen, then in line~\ref{line:growanye} we can choose an edge $e$ that does not belong to any graph in $\mathcal{S}_G$.
Crucially, algorithm \textsc{Grow} chooses a graph $R \in \mathcal{R}_{G'}$  which contains such an edge $e$ and makes it the seed $F_0$ for a growing procedure (line~\ref{line:growanyr}). This choice of $F_0$ will allow us to conclude later that there always exists an edge eligible for extension in \textsc{Grow} (see the proof of Claim~\ref{claim:grow}), that is, the while loop of \textsc{Grow} operates as desired and doesn't get stuck. 

 \begin{figure}
\begin{algorithmic}[1]
\Procedure{Grow}{$G' = (V,E)$}
    \If {$\forall e \in E: |\mathcal{S}_{G'}(e)| = 1$}\label{line:growspecial1}
        \State $T \gets$ any member of $\mathcal{T}_{G'}$\label{line:growt}
        \State \Return $\bigcup_{e \in E(T)}\mathcal{S}_{G'}(e)$\label{line:growspecial1end}
    \EndIf
    \If{$\exists e \in E : |\mathcal{S}_{G'}(e)| \geq 2$}\label{line:growspecial2}
        \State $S_1, S_2 \gets$ any two distinct members of $\mathcal{S}_{G'}(e)$\label{line:growanytwo}
        \State \Return $S_1 \cup S_2$\label{line:growspecial2end}
    \EndIf\label{line:endif}
    \State $e \gets$ any $e \in E : |\mathcal{S}_{G'}(e)| = 0$\label{line:growanye}
    \State $F_0 \gets$ any $R \in \mathcal{R}_{G'}:e \in E(R)$\label{line:growanyr}
    \State $i \gets 0$\label{line:growito0}
    \While {$(i < \ln(n)) \land (\forall \tilde{F} \subseteq F_i : \lambda(\tilde{F}) > -\gamma)$}\label{line:growwhileconditions}
        \If {$\exists R \in \mathcal{R}_{G'}\setminus \mathcal{R}_{F_i} : |V(R) \cap V(F_i)| \geq 2$}\label{line:growRline}
            \State $F_{i+1} \gets F_i \cup R$\label{line:growR-F_i+1}
        \Else   
            \State $e \gets \textsc{Eligible-Edge}(F_i)$\label{line:eligible-edge}
            \State $F_{i+1} \gets \textsc{Extend-L}(F_i, e, G')$\label{line:F_i+1getsextend}
        \EndIf
        \State $i \gets i + 1$
    \EndWhile
    \If {$i \geq \ln(n)$}
        \State \Return{$F_i$}\label{line:growreturnfi}
    \Else
        \State \Return{\textsc{Minimising-Subgraph}($F_i$)}\label{line:growreturnminsub}
    \EndIf
    
\EndProcedure
\end{algorithmic}\smallskip\smallskip

\begin{algorithmic}[1]
\Procedure{Extend-L}{$F,e,G'$}
    \State $L \gets$ any $L \in \mathcal{L}^*_{G'}$: $e \in E(L)$\label{line:linl*}
    \State $F' \gets F \cup L$\label{line:f'fl}
        \ForAll{$e' \in E(L)\setminus E(F)$}\label{line:e'inL}
                \State $R_{e'} \gets$ any $R \in \mathcal{R}_{G'} : E(L) \cap E(R) = \{e'\}$\label{line:re'}
                \State $F' \gets F' \cup R_{e'}$\label{line:f'f're'}
        \EndFor
        \State \Return {$F'$}
\EndProcedure 

\end{algorithmic}
\caption{The implementation of algorithm \textsc{Grow}.}\label{growfig}
\end{figure}
In each iteration $i$ of the while-loop, the growing procedure extends $F_i$ to $F_{i+1}$ in one of two ways.
The first (lines~\ref{line:growRline}-\ref{line:growR-F_i+1}) is by attaching a copy of $H_1$ in $G'$ that intersects $F_i$ in at least two vertices but is not contained in $F_i$.
The second is more involved and begins with calling a function \textsc{Eligible-Edge} which maps $F_i$ to an edge $e \in E(F_i)$ which is eligible for extension in \textsc{Grow} (we will show that such an edge always exists).
Importantly, \textsc{Eligible-Edge} selects this edge $e$ to be \textit{unique up to isomorphism of $F_i$}, that is, for any two isomorphic graphs $F$ and $F'$, there exists an isomorphism $\phi$ with $\phi(F) = F'$ such that \[\phi(\textsc{Eligible-Edge}(F)) = \textsc{Eligible-Edge}(F').\]
In particular, our choice of $e$ depends only on $F_i$ and not on the surrounding graph $G'$ or any previous graph $F_j$ with $j < i$ (indeed, there may be many ways that \textsc{Grow} could construct a graph isomorphic to $F_i$).
One could implement \textsc{Eligible-Edge} by having an enormous table of representatives for all isomorphism classes of graphs with up to $n$ vertices.
Since we do not care about complexity here, and only want to show the existence of certain structures in $G'$, the time \textsc{Eligible-Edge} would take to be implemented is unimportant.
What is important is that \textsc{Eligible-Edge} does not itself increase the number of graphs $F$ that \textsc{Grow} can output. 

Once we have our edge $e \in E(F_i)$ eligible for extension in \textsc{Grow}, we apply a procedure called \textsc{Extend-L} which attaches a graph $L \in \mathcal{L}^*_{G'}$ that contains $e$ to $F_i$ (line~\ref{line:F_i+1getsextend}).
We then attach to each new edge $e' \in E(L)\setminus E(F_i)$ a graph $R_{e'} \in \mathcal{R}_{G'}$ such that $E(L) \cap E(R_{e'}) = \{e'\}$ (lines~\ref{line:e'inL}-\ref{line:f'f're'} of \textsc{Extend-L}).
(We will show later that such a graph $L$ and graphs $R_{e'}$ exist and that $E(L)\setminus E(F_i)$ is non-empty.) The algorithm comes to an end when either $i \geq \ln(n)$ or $\lambda(\tilde{F}) \leq -\gamma$ for some subgraph $\tilde{F} \subseteq F_i$.
In the former, the algorithm returns $F_i$ (line~\ref{line:growreturnfi}); in the latter, the algorithm returns a subgraph $\tilde{F} \subseteq F_i$ that minimises $\lambda(\tilde{F})$ (line~\ref{line:growreturnminsub}).
For each graph $F$, the function \textsc{Minimising-Subgraph($F$)} returns such a minimising subgraph that is \textit{unique up to isomorphism}.
Once again, this is to ensure that \textsc{Minimising-Subgraph($F$)} does not itself artificially increase the number of graphs that \textsc{Grow} can output.
As with function \textsc{Eligible-Edge}, one could implement \textsc{Minimising-Subgraph} using an enormous look-up table.

\definecolor{aqaqaq}{rgb}{0.6274509803921569,0.6274509803921569,0.6274509803921569}
\definecolor{yqyqyq}{rgb}{0.5019607843137255,0.5019607843137255,0.5019607843137255}
\begin{figure}[!ht]
\begin{center}
\begin{tikzpicture}[line cap=round,line join=round,>=triangle 45,x=1cm,y=1cm]
\draw [line width=3pt,color=yqyqyq] (4,6)-- (4,4);
\draw [line width=3pt,color=yqyqyq] (4,4)-- (6,4);
\draw [line width=3pt,color=yqyqyq] (6,4)-- (6,6);
\draw [line width=3pt,color=yqyqyq] (6,6)-- (4,6);
\draw [line width=3pt,color=yqyqyq] (8,4)-- (10,4);
\draw [line width=3pt,color=yqyqyq] (10,4)-- (10,6);
\draw [line width=3pt,color=yqyqyq] (10,6)-- (8,6);
\draw [line width=3pt,color=yqyqyq] (8,6)-- (8,4);
\draw [line width=2pt] (2,6)-- (4,4);
\draw [line width=2pt] (2,4)-- (4,6);
\draw [line width=2pt] (4,6)-- (6,8);
\draw [line width=2pt] (4,8)-- (6,6);
\draw [line width=2pt] (4,4)-- (6,2);
\draw [line width=2pt] (4,2)-- (6,4);
\draw [line width=2pt] (6,6)-- (8,4);
\draw [line width=2pt] (6,4)-- (8,6);
\draw [line width=2pt] (8,2)-- (10,4);
\draw [line width=2pt] (8,4)-- (10,2);
\draw [line width=2pt] (10,4)-- (12,6);
\draw [line width=2pt] (10,6)-- (12,4);
\draw [line width=2pt] (8,8)-- (10,6);
\draw [line width=2pt] (8,6)-- (10,8);
\draw [line width=2pt] (4,6)-- (4,8);
\draw [line width=2pt] (4,8)-- (6,8);
\draw [line width=2pt] (6,8)-- (6,6);
\draw [line width=2pt] (2,6)-- (4,6);
\draw [line width=2pt] (2,6)-- (2,4);
\draw [line width=2pt] (2,4)-- (4,4);
\draw [line width=2pt] (4,4)-- (4,2);
\draw [line width=2pt] (4,2)-- (6,2);
\draw [line width=2pt] (6,2)-- (6,4);
\draw [line width=2pt] (6,4)-- (8,4);
\draw [line width=2pt] (6,6)-- (8,6);
\draw [line width=2pt] (8,2)-- (8,4);
\draw [line width=2pt] (8,2)-- (10,2);
\draw [line width=2pt] (10,2)-- (10,4);
\draw [line width=2pt] (10,4)-- (12,4);
\draw [line width=2pt] (12,4)-- (12,6);
\draw [line width=2pt] (12,6)-- (10,6);
\draw [line width=2pt] (10,6)-- (10,8);
\draw [line width=2pt] (8,8)-- (10,8);
\draw [line width=2pt] (8,8)-- (8,6);
\draw [line width=0.5pt,dash pattern=on 2pt off 2pt] (2.9320423003311626,4.929673470131106) circle (1.7432259512012866cm);
\draw [line width=0.5pt,dash pattern=on 4pt off 4pt] (4.699041188101171,5.102672795114692) circle (3.72379990891749cm);
\begin{scriptsize}
\draw [fill=yqyqyq] (4,6) circle (2.5pt);
\draw [fill=yqyqyq] (4,4) circle (2.5pt);
\draw [fill=aqaqaq] (6,4) circle (2.5pt);
\draw [fill=yqyqyq] (6,6) circle (2.5pt);
\draw [fill=black] (2,6) circle (2.5pt);
\draw [fill=black] (2,4) circle (2.5pt);
\draw [fill=black] (4,8) circle (2.5pt);
\draw [fill=black] (6,8) circle (2.5pt);
\draw [fill=black] (4,2) circle (2.5pt);
\draw [fill=black] (6,2) circle (2.5pt);
\draw [fill=yqyqyq] (8,4) circle (2.5pt);
\draw [fill=yqyqyq] (8,6) circle (2.5pt);
\draw [fill=yqyqyq] (10,4) circle (2.5pt);
\draw [fill=yqyqyq] (10,6) circle (2.5pt);
\draw [fill=black] (8,8) circle (2.5pt);
\draw [fill=black] (10,8) circle (2.5pt);
\draw [fill=black] (12,6) circle (2.5pt);
\draw [fill=black] (12,4) circle (2.5pt);
\draw [fill=black] (8,2) circle (2.5pt);
\draw [fill=black] (10,2) circle (2.5pt);
\end{scriptsize}
\end{tikzpicture}
\vspace{0.4cm}
\caption{A graph $F_2$ resulting from two non-degenerate iterations for $H_1 = K_4$ and $H_2 = C_4$. The two central copies of $H_2$ are shaded.}
\end{center}
\end{figure}

We will now argue that \textsc{Grow} terminates without error, that is, \textsc{Eligible-Edge} always finds an edge eligible for extension in \textsc{Grow} and all `any'-assignments in \textsc{Grow} and \textsc{Extend-L} are always successful. 

\begin{claim}\label{claim:grow}
Algorithm \textsc{Grow} terminates without error on any input graph $G' \in \mathcal{C}^*$ that is not  $(H_1, H_2)$-sparse or not an $\hat{\mathcal{A}}$-graph.\footnote{See Definitions~\ref{def:agraph} and \ref{def:h1h2sparse}.} Moreover, for every iteration $i$ of the while-loop, we have $e(F_{i+1}) > e(F_i)$.
\end{claim}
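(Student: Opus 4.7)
The plan is to verify, step by step, that each \emph{any}-assignment in \textsc{Grow} and \textsc{Extend-L} succeeds and that every iteration of the while-loop strictly adds edges. First I would handle the two special cases on lines~\ref{line:growspecial1}--\ref{line:growspecial2end}. If every edge has $|\mathcal{S}_{G'}(e)| = 1$, then $G'$ is an $\hat{\mathcal{A}}$-graph (Definition~\ref{def:agraph}), so by hypothesis it must fail to be $(H_1,H_2)$-sparse, giving $\mathcal{T}_{G'} \neq \emptyset$ and hence a valid $T$ for line~\ref{line:growt}; if some edge has $|\mathcal{S}_{G'}(e)| \geq 2$, line~\ref{line:growanytwo} is trivially fine. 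Otherwise, some edge $e$ has $|\mathcal{S}_{G'}(e)| = 0$, and since $G' \in \mathcal{C}^* \subseteq \mathcal{C}$ there exists an $R \in \mathcal{R}_{G'}$ containing $e$, which legitimises the seed $F_0$.

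The main invariant I would maintain throughout the while-loop is that $F_i$ is $2$-connected and contains $e$. Lemma~\ref{lemma:2connected} supplies the base case $F_0 \cong H_1$, and each extension attaches a $2$-connected piece ($R$, $L$, or $R_{e'}$) along at least two shared vertices: the \textbf{if}-clause on line~\ref{line:growRline} supplies these directly, and in \textsc{Extend-L} the copy $L$ shares the endpoints of $e \in E(F_i)$ with $F_i$ while each $R_{e'}$ shares the endpoints of $e'$ with $F_i \cup L$; such a gluing preserves $2$-connectivity by a standard argument. The while-loop condition, combined with the definition of $\gamma$ in~(\ref{eq:gamma}), further forces $m(F_i) \leq m_2(H_1, H_2) + \varepsilon$, since any denser subgraph $\tilde F$ would satisfy $\lambda(\tilde F) < -v(\tilde F)\varepsilon/m_2(H_1,H_2) \leq -\gamma$. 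Whenever the \textbf{else}-branch is entered I would then argue $F_i \notin \mathcal{C}^*$: otherwise $F_i$ satisfies all three defining properties of $\hat{\mathcal{A}}$, so there exists a (maximal) $S \in \mathcal{S}_{G'}$ with $F_i \subseteq S$, and so $e \in E(F_0) \subseteq E(F_i) \subseteq E(S)$ contradicts $|\mathcal{S}_{G'}(e)| = 0$. Hence \textsc{Eligible-Edge} always succeeds, and membership $G' \in \mathcal{C}^*$ supplies valid choices of $L$ and of each $R_{e'}$ inside \textsc{Extend-L}.

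For the strict edge increase, the \textbf{if}-branch is immediate because $R \notin \mathcal{R}_{F_i}$ forces at least one edge of $R$ to be new. For the \textbf{else}-branch I would show $E(L) \setminus E(F_i) \neq \emptyset$ by contradiction. If $L \subseteq F_i$, every $e'' \in E(L)$ lies in $F_i$, so the copy $R'' \in \mathcal{R}_{G'}$ with $E(L) \cap E(R'') = \{e''\}$ (existing because $L \in \mathcal{L}^*_{G'}$) shares at least the endpoints of $e''$ with $F_i$; the failure of the \textbf{if}-clause on line~\ref{line:growRline} then forces $R'' \in \mathcal{R}_{F_i}$. Hence $L \in \mathcal{L}^*_{F_i}$ with $e \in E(L)$, contradicting the eligibility of $e$. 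So at least one $R_{e'}$ is attached and $e(F_{i+1}) > e(F_i)$. When the loop terminates, either $i \geq \ln n$ and $F_i$ is returned, or some $\tilde F \subseteq F_i$ satisfies $\lambda(\tilde F) \leq -\gamma$ and \textsc{Minimising-Subgraph}$(F_i)$ receives a valid input.

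The main obstacle is the \textbf{else}-branch analysis, where three ingredients---the density bound from the loop condition, the inherited $2$-connectivity, and the failure of the \textbf{if}-clause---must be combined carefully. The cleanest packaging is the single observation that failure of the \textbf{if}-clause propagates through \textsc{Extend-L} to pin each $R''$ inside $F_i$, which is the combinatorial insight underpinning both halves of the claim.
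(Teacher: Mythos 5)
Your proposal is correct and follows essentially the same route as the paper's proof: handle the two special cases, observe that $G' \in \mathcal{C}^*$ legitimises the seed and all assignments in \textsc{Extend-L}, use $2$-connectivity (via Lemma~\ref{lemma:2connected}) plus the while-loop density bound plus the fact that $e \in E(F_0)$ has $|\mathcal{S}_{G'}(e)| = 0$ to rule out $F_i \in \hat{\mathcal{A}}$ and hence show \textsc{Eligible-Edge} succeeds, and show the edge increase by noting that $L \subseteq F_i$ together with the failure of the line~\ref{line:growRline} if-clause would force $L \in \mathcal{L}^*_{F_i}$, contradicting eligibility. Your presentation differs only cosmetically — you maintain the $2$-connectivity and density bounds as explicit invariants and run the arguments in contrapositive form where the paper argues by contradiction, and you spell out the gluing lemma that the paper merely cites as "by construction" — but the underlying decomposition and key observations are identical.
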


\begin{proof}
Our proof is very similar to the proof of Claim 13 in \cite{msss}. 

We first show that the special cases in lines~\ref{line:growspecial1}-\ref{line:endif} always function as desired. 
The first case occurs if and only if $G'$ is an $\hat{\mathcal{A}}$-graph. By assumption, $G'$ is not $(H_1, H_2)$-sparse, hence the family $\mathcal{T}_{G'}$ is not empty. 
Hence the assignment in line~\ref{line:growt} is successful. Clearly, the assignment in line~\ref{line:growanytwo} is always successful due to the if-condition in line~\ref{line:growspecial2}.

One can also easily see that the assignments in lines~\ref{line:growanye} and \ref{line:growanyr} are successful. 
Indeed, neither of the two special cases occur so we must have an edge $e \in E$ that is not contained in any $S \in \mathcal{S}_{G'}$. 
Also, there must exist a member of $\mathcal{R}_{G'}$ that contains $e$ because $G'$ is a member of $\mathcal{C}^* \subseteq \mathcal{C}$.

Next, we show that the call to \textsc{Eligible-Edge} in line~\ref{line:eligible-edge} is always successful. Recall \eqref{eq:gamma} on page \pageref{eq:gamma}.
Indeed, suppose for a contradiction that no edge in $F_i$ is eligible for extension in \textsc{Grow} for some $i \geq 0$. Then every edge $e \in E(F_i)$ is in some $L \in \mathcal{L}^*_{F_i}$, by definition. Hence $F \in \mathcal{C}^*$. 
Recall that $H_1$ and $H_2$ satisfy the criteria of Conjecture~\ref{conj:aconj}. Hence $H_2$ is strictly $2$-balanced, $H_1$ is strictly balanced with respect to $d_2(\cdot, H_2)$) and $m_2(H_1) \geq m_2(H_2) > 1$.
Then, by Lemma~\ref{lemma:2connected}, $H_1$ and $H_2$ are $2$-connected, hence $F_i$ is $2$-connected by construction. However, our choice of $F_0$ in line~\ref{line:growanyr} guarantees that $F_i$ is not in $\hat{\mathcal{A}}$.
Indeed, the edge $e$ selected in line~\ref{line:growanye} satisfying $|\mathcal{S}_{G'}(e)| = 0$ is an edge of $F_0$ and $F_0 \subseteq F_i \subseteq G'$. 
Thus, by the definition of $\hat{\mathcal{A}}$ and that $m_2(H_1) > m_2(H_2)$, we have that $m(F_i) > m_2(H_1, H_2) + \varepsilon$.
Thus, there exists a non-empty graph $\tilde{F} \subseteq F_i$ with $d(\tilde{F}) = m(F_i)$ such that 
\begin{align*}
\lambda(\tilde{F}) & = v(\tilde{F}) - \frac{e(\tilde{F})}{m_2(H_1, H_2)} \\  
                                                 & = e(\tilde{F})\left(\frac{1}{m(F_i)} - \frac{1}{m_2(H_1, H_2)}\right) \\
                                                 & < e(\tilde{F})\left(\frac{1}{m_2(H_1, H_2) + \varepsilon} - \frac{1}{m_2(H_1, H_2)}\right) \\
                                                 & = -\gamma e(\tilde{F}) \leq -\gamma.
\end{align*}
Thus \textsc{Grow} terminates in line~\ref{line:growwhileconditions} without calling \textsc{Eligible-Edge}, and so every call to \textsc{Eligible-Edge} is successful and returns an edge $e$.
Since $G' \in \mathcal{C^*}$, the call to \textsc{Extend-L}$(F_i, e, G')$ is also successful and thus there exist suitable graphs $L \in \mathcal{L}^*_{G'}$ with $e \in E(L)$ and $R_{e'}$ for each $e' \in E(L) \setminus E(F_i)$. 

It remains to show that for every iteration $i$ of the while-loop, we have $e(F_{i+1}) > e(F_i)$. 
Since a copy $R$ of $H_1$ found in line~\ref{line:growRline} is a copy of $H_1$ in $G'$ but not in $F_i$ (and $H_1$ is connected), we must have that $F_{i+1} = F_i \cup R$ contains at least one more edge than $F_i$.

So assume lines~\ref{line:eligible-edge} and \ref{line:F_i+1getsextend} are called in iteration $i$ and let $e$ be the edge chosen in line~\ref{line:eligible-edge} and $L$ the subgraph selected in line~\ref{line:linl*} of \textsc{Extend-L}$(F_i, e, G')$.
By the definition of $\mathcal{L}^*_{G'}$, for each $e' \in E(L)$ there exists $R_{e'} \in \mathcal{R}_{G'}$ such that $E(L) \cap E(R_{e'}) = \{e'\}$. If $|E(L) \setminus E(F_i)| > 0$, then $e(F_{i+1}) \geq e(F_i \cup L) > e(F_i)$.
Otherwise, $L \subseteq F_i$. But since $e$ is eligible for extension in \textsc{Grow}, we must have $L \notin \mathcal{L}^*_{F_i}$.
Thus there exists $e' \in L$ such that $R_{e'} \in \mathcal{R}_{G'}\setminus \mathcal{R}_{F_i}$ and $|V(R_{e'}) \cap V(F_i)| \geq 2$, contradicting that lines~\ref{line:eligible-edge} and \ref{line:F_i+1getsextend} are called in iteration $i$.\end{proof}

\subsection{Proof of Lemma~\ref{lemma:noerror}}\label{sec:lemnoerror}

We consider the evolution of $F_i$ now in more detail. We call iteration $i$ of the while-loop in algorithm \textsc{Grow} \emph{non-degenerate} if all of the following hold:

\begin{itemize}
    \item The condition in line~\ref{line:growRline} evaluates to false (and \textsc{Extend-L} is called); 
    \item In line~\ref{line:f'fl} of \textsc{Extend-L}, we have $V(F) \cap V(L) = e$;
    \item In every execution of line~\ref{line:f'f're'} of \textsc{Extend-L}, we have $V(F') \cap V(R_{e'}) = e'$.
\end{itemize}
Otherwise, we call iteration $i$ \emph{degenerate}.
Note that, in non-degenerate iterations, there are only a \textit{constant} number of graphs $F_{i+1}$ that can result from any given $F_i$ since \textsc{Eligible-Edge} determines the exact position where to attach the copy $L$ of $H_2$, $V(F_i) \cap V(L) = e$ and for every execution of line~\ref{line:f'f're'} of \textsc{Extend-L} we have $V(F') \cap V(R_{e'}) = e'$ (recall that the edge $e$ found by \textsc{Eligible-Edge}($F_i$) is \emph{unique up to isomorphism of $F_i$}).

\begin{claim}\label{claim:non-degen}
If iteration $i$ of the while-loop in procedure \textsc{Grow} is non-degenerate, we have \[\lambda(F_{i+1}) = \lambda(F_i).\]
\end{claim}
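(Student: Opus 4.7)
The plan is to perform a direct bookkeeping of the number of vertices and edges that are gained in a non-degenerate iteration, and then to appeal to the balancedness hypotheses on $H_1$ and $H_2$ provided by Conjecture~\ref{conj:aconj} to conclude that the resulting change in $\lambda$ is exactly zero. In such an iteration, the subroutine \textsc{Extend-L} is invoked on an eligible edge $e \in E(F_i)$ and first forms $F_i \cup L$ for some $L \in \mathcal{L}^*_{G'}$ with $e \in E(L)$; by non-degeneracy, $V(F_i) \cap V(L) = e$, so this step adds exactly $v_2 - 2$ new vertices and $e_2 - 1$ new edges. Then, for each of the $e_2 - 1$ edges $e' \in E(L)\setminus \{e\}$, we attach a copy $R_{e'}$ of $H_1$; again by non-degeneracy, each such $R_{e'}$ intersects the graph built so far in exactly the edge $e'$, contributing $v_1 - 2$ new vertices and $e_1 - 1$ new edges.

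Summing these contributions yields
\[
v(F_{i+1}) - v(F_i) = (v_2 - 2) + (e_2 - 1)(v_1 - 2)
\quad\text{and}\quad
e(F_{i+1}) - e(F_i) = (e_2 - 1) + (e_2 - 1)(e_1 - 1) = (e_2 - 1)\, e_1.
\]
Hence
\[
\lambda(F_{i+1}) - \lambda(F_i)
= (v_2 - 2) + (e_2 - 1)(v_1 - 2) - \frac{(e_2 - 1)\, e_1}{m_2(H_1, H_2)}.
\]
The claim therefore reduces to the identity $(v_2-2) + (e_2-1)(v_1-2) = \frac{(e_2-1)e_1}{m_2(H_1,H_2)}$, equivalently
\[
m_2(H_1, H_2) \;=\; \frac{e_1}{(v_1 - 2) + \frac{v_2 - 2}{e_2 - 1}}.
\]

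Next I would verify this identity from the density assumptions available to us. Since we are in the case $m_2(H_1) > m_2(H_2)$ of Conjecture~\ref{conj:aconj}, the graph $H_2$ is strictly $2$-balanced, so $(v_2 - 2)/(e_2 - 1) = 1/m_2(H_2)$. Substituting yields
\[
\frac{e_1}{(v_1 - 2) + 1/m_2(H_2)} \;=\; d_2(H_1, H_2).
\]
Because $H_1$ is assumed strictly balanced with respect to $d_2(\cdot, H_2)$, we have $d_2(H_1, H_2) = m_2(H_1, H_2)$, which gives the required identity. This yields $\lambda(F_{i+1}) - \lambda(F_i) = 0$ and completes the proof.

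There is really no major obstacle here: this is an algebraic identity that mirrors the ``flower'' calculation carried out in Section~\ref{sec:ainc*orinc} of the introduction, where it was shown that appending a copy of $H_2$ together with appended copies of $H_1$ at each of its edges to a base copy of $H_1$ contributes edges over vertices in precisely the ratio $m_2(H_1, H_2)$. The only subtlety is making certain that the non-degeneracy hypothesis is used in full — once to ensure that each new attachment is ``clean'' (so that no vertices or edges are double-counted), and once via the fact that \textsc{Extend-L} (rather than the line~\ref{line:growRline} branch) is executed — so that the vertex and edge counts above are exact rather than inequalities.
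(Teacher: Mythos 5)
Your proof is correct and follows essentially the same route as the paper's: in both cases one counts exactly $v_2-2$ new vertices and $e_2-1$ new edges from attaching $L$, then $(e_2-1)(v_1-2)$ vertices and $(e_2-1)(e_1-1)$ edges from the appended copies of $H_1$, and concludes via the strict $2$-balancedness of $H_2$ and strict balancedness of $H_1$ with respect to $d_2(\cdot,H_2)$. The only cosmetic difference is that you isolate the resulting identity for $m_2(H_1,H_2)$ before applying the balancedness hypotheses, whereas the paper substitutes them directly into the $\lambda$-difference.
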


\begin{proof}
In a non-degenerate iteration we add $v_2 - 2$ vertices and $e_2 - 1$ edges for the copy of $H_2$ and then $(e_2 - 1)(v_1 - 2)$ new vertices and $(e_2 - 1)(e_1 - 1)$ new edges to complete the copies of $H_1$. This gives \begin{align*}
  \lambda(F_{i+1}) - \lambda(F_i)   & = v_2 - 2 + (e_2  - 1)(v_1 - 2) - \frac{(e_2 - 1)e_1}{m_2(H_1,H_2)} \\
                                    & = v_2 - 2 + (e_2  - 1)(v_1 - 2) - (e_2 - 1)\left(v_1 - 2 + \frac{1}{m_2(H_2)}\right) \\
                                    & = 0,
\end{align*}
where we have used in the penultimate equality that $H_1$ is (strictly) balanced with respect to $d_2(\cdot, H_2)$ and in the final inequality that $H_2$ is (strictly) $2$-balanced.\end{proof}

When we have a degenerate iteration $i$, the structure of $F_{i+1}$ may vary considerably and also depend on the structure of $G'$.
Indeed, if $F_i$ is extended by a copy $R$ of $H_1$ in line~\ref{line:growR-F_i+1}, then $R$ could intersect $F_i$ in a multitude of ways.
Moreover, there may be several copies of $H_1$ that satisfy the condition in line~\ref{line:growRline}.
The same is true for graphs added in lines~\ref{line:f'fl} and \ref{line:f'f're'} of \textsc{Extend-$L$}. 
Thus, degenerate iterations cause us difficulties since they enlarge the family of graphs algorithm \textsc{Grow} can return.
However, we will show that at most a constant number of degenerate iterations can happen before algorithm \textsc{Grow} terminates, allowing us to bound from above sufficiently well the number of non-isomorphic graphs \textsc{Grow} can return.
Pivotal in proving this is the following claim.  

\begin{claim}\label{claim:degenfull}
There exists a constant $\kappa = \kappa(H_1,H_2) > 0$ such that if iteration $i$ of the while-loop in procedure \textsc{Grow} is degenerate then we have \[\lambda(F_{i+1}) \leq \lambda(F_i) - \kappa.\]
\end{claim}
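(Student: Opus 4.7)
The plan is to decompose a degenerate iteration $i$ into three cases according to which branch of \textsc{Grow} (or of the inner routine \textsc{Extend-L}) is responsible for the degeneracy, to dispose of two of them by direct bookkeeping and strict balance, and to invoke Lemma~\ref{lemma:21} in the third, genuinely novel case. Throughout, write $\mu := m_2(H_1, H_2)$ and recall from the proof of Claim~\ref{claim:non-degen} the two identities $\lambda(H_1) = 2 - 1/m_2(H_2)$ (from strict balance of $H_1$ with respect to $d_2(\cdot,H_2)$) and $\lambda(H_2) - \lambda(K_2) + (e_2-1)\bigl(\lambda(H_1) - \lambda(K_2)\bigr) = 0$ (from strict $2$-balance of $H_2$). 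I will also use the elementary observation that, whenever a copy of $H$ is pasted onto an existing graph along a common subgraph $J \subseteq H$, the change in $\lambda$ equals $\lambda(H) - \lambda(J)$.

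\textbf{Case (a): Line~\ref{line:growR-F_i+1} runs.} Then $F_{i+1} = F_i \cup R$ with $J := R \cap F_i$ a proper subgraph of $R \cong H_1$ satisfying $v_J \geq 2$. Directly, $\lambda(F_{i+1}) - \lambda(F_i) = \lambda(H_1) - \lambda(J)$. Strict balance of $H_1$ with respect to $d_2(\cdot,H_2)$ gives $d_2(J,H_2) < \mu$, which rearranges to $\lambda(J) > \lambda(H_1)$. As there are only finitely many isomorphism classes of proper subgraphs of $H_1$, this strict inequality yields a uniform gap $\kappa_a > 0$.

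\textbf{Case (b): \textsc{Extend-L} runs and $V(L) \cap V(F_i) \supsetneq e$.} Let $J_L := L \cap F_i$, so $v_{J_L} \geq 3$. Bookkeep the full iteration (adding $v_2 - v_{J_L}$ vertices and $e_2 - e_{J_L}$ edges from $L$, and then $v_1 - v_{J_{e'}}$ vertices and $e_1 - e_{J_{e'}}$ edges for each of the $e_2 - e_{J_L}$ new edges $e' \in E(L) \setminus E(F_i)$) and subtract the zero balance from Claim~\ref{claim:non-degen}. Using the second identity above to eliminate the $e_{J_L} - 1$ `missing' petals, the result simplifies to an upper bound whose leading summand is $-\bigl(m_2(H_2)(v_{J_L}-2) - (e_{J_L}-1)\bigr)/m_2(H_2)$, with all remaining summands non-positive. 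Strict $2$-balance of $H_2$ makes the bracket strictly positive, and, as $J_L \subsetneq H_2$ ranges over finitely many isomorphism classes, a uniform constant $\kappa_b > 0$ emerges.

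\textbf{Case (c): \textsc{Extend-L} runs, $V(L) \cap V(F_i) = e$, but some petal $R_{e'}$ meets the current graph $F'$ at $V(R_{e'}) \cap V(F') \supsetneq e'$.} Here the subgraph $J_{e'} := R_{e'} \cap F'$ may draw vertices and edges from $F_i$, from $L$, and from previously attached petals $R_{e''}$. The obstacle, and the reason this is the main work of the paper, is that local bookkeeping is not enough: for a proper subgraph $J_{e'} \subsetneq H_1$ one can have $d_2(J_{e'}) > \mu$ (since $m_2(H_1) > \mu$ by Proposition~\ref{prop:m2h1h2}), in which case $\lambda(J_{e'}) < \lambda(K_2)$ and the associated summand has the \emph{wrong} sign. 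I will therefore abandon the term-by-term view and invoke Lemma~\ref{lemma:21}, which compares the entire folded flower (the graph added in this iteration, viewed with its identifications) to a fully unfolded non-degenerate flower, and extracts from any non-trivial folding a proper subgraph of the flower whose density strictly exceeds $\mu$. The strict balance of $H_1$ with respect to $d_2(\cdot, H_2)$, together with the strict $2$-balance of $H_2$ applied to the central part, force this excess density to be at least a positive constant $\kappa_c > 0$ depending only on $H_1, H_2$. Setting $\kappa := \min\{\kappa_a, \kappa_b, \kappa_c\}$ gives the claim.

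The main obstacle is case (c), and, inside it, the proof of Lemma~\ref{lemma:21}: one has to rule out all possible ways in which petals can fold onto $L$, onto $F_i$, or onto each other, while still producing a single identifiable subgraph whose density beats $\mu$ by a constant independent of $i$ and of $F_i$. The remaining two cases reduce to finitely many configurations and a routine strict-inequality-plus-finiteness argument.
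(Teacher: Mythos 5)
Your decomposition into three cases does not match the paper's argument, which splits into a ``type 1'' degeneracy (your Case (a), handled by Claim~\ref{claim:degen1}) and a ``type 2'' degeneracy (Claim~\ref{claim:degen2}), and then --- crucially --- resolves a type~2 degeneracy by \emph{sequential transformations} $F' = F^0 \to F^1 \to F^2 \to F^3 = F^*$, each transformation isolating one facet of the degeneracy and each yielding its own strictly positive gap. That sequential structure is what lets the paper handle arbitrary combinations of overlaps. Your partition into mutually exclusive cases does not, and this creates two concrete gaps.

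First, your Case (b) is declared to cover \emph{all} iterations with $V(L)\cap V(F_i)\supsetneq e$, regardless of whether petals also fold. After subtracting the non-degenerate balance, the petal summands take the form $\lambda(K_2)-\lambda(J_{e'})$, and your claim is that these are all non-positive. But this is exactly the failure mode you yourself identify in Case (c): when $d_2(J_{e'}) > m_2(H_1,H_2)$ (possible since $m_2(H_1) > m_2(H_1,H_2)$), one has $\lambda(J_{e'}) < \lambda(K_2)$, so the summand is \emph{positive}. Nothing in your Case (b) hypothesis rules out folded petals, so ``all remaining summands non-positive'' is unjustified and in general false. The paper avoids this by handling the $L$-overlap with transformation (ii) \emph{and then} passing the residual petal degeneracies to transformation (iii); your partition forces you to handle both issues at once inside Case (b), which you do not do.

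Second, your Case (c) invokes Lemma~\ref{lemma:21}, but that lemma compares a $J \in \mathcal{H}(F,\hat e,H_1,H_2)\setminus\mathcal{H}^*(F,\hat e,H_1,H_2)$ to a $J^*\in\mathcal{H}^*$, and membership in $\mathcal{H}(F,\hat e,H_1,H_2)$ (Definition~\ref{def:keyfam}) requires that each petal satisfies $(V(F)\setminus \hat e)\cap V(H_f)=\emptyset$, i.e.\ petals may fold onto $L$ and onto each other but \emph{not onto $F_i$}. Your Case (c) explicitly permits a petal $R_{e'}$ to meet $F'$ (hence possibly $F_i$) in extra vertices, which falls outside the hypothesis of Lemma~\ref{lemma:21}. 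The paper deals with precisely this situation via transformation (i) (moving any vertices of $V(F)\cap(V(L_R')\setminus V(L'))$ off of $F$, gaining $\ge 1$ in $\lambda$), and only afterwards does $F^2$ land in $\mathcal{H}(F,e,H_1,H_2)$ so that Lemma~\ref{lemma:21} can be applied in transformation (iii). As written, your Case (c) argument does not reach the hypotheses of the lemma it cites. Your Case (a) argument is fine (modulo a one-line check for $e_J=0$, which the paper also performs), but Cases (b) and (c) need to be restructured --- most naturally, by adopting the paper's step-by-step transformation scheme rather than a partition.
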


We prove Claim~\ref{claim:degenfull} in Section~\ref{sec:claimdegenfull}.
Together, Claims~\ref{claim:non-degen} and \ref{claim:degenfull} yield the following claim.

\begin{claim}\label{claim:q_1}
There exists a constant $q_1 = q_1(H_1,H_2)$ such that algorithm \textsc{Grow} performs at most $q_1$ degenerate iterations before it terminates, regardless of the input instance $G'$.
\end{claim}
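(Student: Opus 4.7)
The plan is to control the number of degenerate iterations by tracking the evolution of the parameter $\lambda(F_i)$ across the while-loop of \textsc{Grow} and using the two preceding claims together with the termination condition.

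First I would examine the initial value $\lambda(F_0)$. Since $F_0$ is chosen in line~\ref{line:growanyr} to be a copy of $H_1$, we have $\lambda(F_0) = v_1 - e_1/m_2(H_1,H_2)$, which is a constant depending only on $H_1$ and $H_2$. Call this constant $\lambda_0 := \lambda_0(H_1,H_2)$.

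Next, I would combine Claims~\ref{claim:non-degen} and \ref{claim:degenfull} to bound $\lambda(F_i)$ from above in terms of the number $q(i)$ of degenerate iterations completed by step $i$. Since every non-degenerate iteration preserves $\lambda$ exactly and every degenerate iteration decreases it by at least $\kappa > 0$, an easy induction on $i$ gives
\[
\lambda(F_i) \le \lambda_0 - q(i)\cdot \kappa.
\]
Now recall that the while-loop condition in line~\ref{line:growwhileconditions} requires $\lambda(\tilde F) > -\gamma$ for \emph{every} subgraph $\tilde F \subseteq F_i$; in particular, if $\lambda(F_i) \le -\gamma$ then taking $\tilde F := F_i$ itself causes the condition to fail, so \textsc{Grow} terminates at (or before) iteration $i$.

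Putting these pieces together, if $q(i) \ge (\lambda_0 + \gamma)/\kappa$ then $\lambda(F_i) \le -\gamma$, and the algorithm must terminate. Hence the total number of degenerate iterations performed is at most
\[
q_1 := \left\lceil \frac{\lambda_0 + \gamma}{\kappa} \right\rceil + 1,
\]
which depends only on $H_1$ and $H_2$ (through $\lambda_0$, $\gamma$ from \eqref{eq:gamma}, and $\kappa$ from Claim~\ref{claim:degenfull}), and is independent of the input graph $G'$. There is no real obstacle here: given Claims~\ref{claim:non-degen} and \ref{claim:degenfull}, the argument is a short bookkeeping calculation, and the only subtlety is the observation that the termination test evaluates $\lambda$ on \emph{any} subgraph, so that the upper bound on $\lambda(F_i)$ itself already suffices to trigger termination.
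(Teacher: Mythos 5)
Your proof is correct and follows essentially the same approach as the paper: combining Claims~\ref{claim:non-degen} and \ref{claim:degenfull} to show $\lambda(F_i)$ drops by at least $\kappa$ per degenerate iteration, starting from the constant $\lambda(F_0)$, until the termination test $\lambda(\tilde F) \le -\gamma$ triggers. The paper writes $q_1 := (\lambda(F_0)+\gamma)/\kappa$ directly; your ceiling-plus-one bookkeeping is a harmless cosmetic difference.
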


\begin{proof}
By Claim~\ref{claim:non-degen}, the value of the function $\lambda$ remains the same in every non-degenerate iteration of the while-loop of algorithm \textsc{Grow}. However, Claim~\ref{claim:degenfull} yields a constant $\kappa$, which depends solely on $H_1$ and $H_2$, such that \[\lambda(F_{i+1}) \leq \lambda(F_i) - \kappa\] for every degenerate iteration $i$.

Hence, after at most \[q_1 := \frac{\lambda(F_0) + \gamma}{\kappa}\] degenerate iterations, we have $\lambda(F_i) \leq -\gamma$, and algorithm \textsc{Grow} terminates.\end{proof}

For $0 \leq d \leq t < \lceil \ln(n) \rceil$, let $\mathcal{F}(t,d)$ denote a family of representatives for the isomorphism classes of all graphs $F_t$ that algorithm \textsc{Grow} can possibly generate after exactly $t$ iterations of the while-loop with exactly $d$ of those $t$ iterations being degenerate. Let $f(t,d) := |\mathcal{F}(t,d)|$.

\begin{claim}\label{claim:polylog}
There exist constants $C_0 = C_0(H_1, H_2)$ and $A = A(H_1,H_2)$ such that \[f(t,d)~\leq\lceil\ln(n)\rceil^{(C_0 +1)d}\cdot~A^{t-d}\] for $n$ sufficiently large.
\end{claim}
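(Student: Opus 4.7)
The plan is to induct on $t$, keeping track of how many isomorphism classes of graphs $F_{i+1}$ can arise from a fixed $F_i$ in a single iteration, and splitting according to whether that iteration is degenerate or non-degenerate. The key preliminary observation is that $F_t$ has bounded polylogarithmic size: each call to either line~\ref{line:growR-F_i+1} or \textsc{Extend-L} adds at most $v_1 + v_2 + (e_2-1)v_1 =: M$ vertices, and since the while-loop runs for at most $\lceil\ln(n)\rceil$ iterations, we get $v(F_i) \leq v_1 + Mt \leq (M+v_1)\lceil\ln(n)\rceil$ throughout.

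Next, I would bound the branching for a non-degenerate iteration. In such an iteration, \textsc{Extend-L} is called, \textsc{Eligible-Edge} returns an edge $e \in E(F_i)$ that is unique up to isomorphism of $F_i$, the new copy $L$ of $H_2$ meets $F_i$ precisely in $e$, and each of the $e_2-1$ appended copies of $H_1$ meets $F_i\cup L$ precisely at its attaching edge. Up to isomorphism of $F_i$, the resulting $F_{i+1}$ is therefore determined by a bounded amount of combinatorial data (essentially which endpoint of $e$ plays which role in $L$, and the analogous data for each appended copy of $H_1$); call the number of resulting isomorphism classes $A = A(H_1,H_2)$.

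The main work is to bound the branching for a degenerate iteration by a polylogarithmic factor. In a degenerate iteration, either (a) line~\ref{line:growR-F_i+1} is executed, or (b) \textsc{Extend-L} is called but either $V(F_i) \cap V(L) \supsetneq e$ or for some $e'$ one has $V(F') \cap V(R_{e'}) \supsetneq e'$. In case (a), $F_{i+1}$ is determined up to isomorphism by specifying the image in $F_i$ of the $\leq v_1$ vertices of $R$ that land in $V(F_i)$, which gives at most $v(F_i)^{v_1}$ choices. In case (b), $F_{i+1}$ is similarly specified by naming an edge of $F_i$ to play the role of the extra part of $L$ (at most $e(F_i)\leq v(F_i)^2$ choices, giving one additional logarithmic factor — this accounts for the ``$+1$'' in $C_0+1$) together with the images of the at most $M$ new vertices that happen to be identified with vertices of $F_i$, contributing at most $v(F_i)^M$ choices. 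Using $v(F_i) \leq (M+v_1)\lceil\ln(n)\rceil$, both cases are bounded by $c\cdot\lceil\ln(n)\rceil^{C_0+1}$ for a constant $c = c(H_1,H_2)$ and $C_0 = M+2$, say.

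Putting this together, by induction on $t$ we get
\[
f(t,d) \;\leq\; A\cdot f(t-1,d) \;+\; c\lceil\ln(n)\rceil^{C_0+1}\cdot f(t-1,d-1),
\]
with the base case $f(0,0) \leq A$ since $F_0$ is a copy of $H_1$ (unique up to isomorphism). Absorbing the constant $c$ into $A$ by enlarging $A$, the inductive step gives $f(t,d) \leq \lceil\ln(n)\rceil^{(C_0+1)d}A^{t-d}$ for $n$ large enough that the ``$+c$'' is absorbed. I expect the main obstacle to be the careful bookkeeping in case (b) above: verifying that once the ``extra'' identifications of $L$-vertices with $F_i$-vertices are fixed, only a constant number of further choices are needed to specify the appended copies $R_{e'}$ up to isomorphism, even when those copies also degenerately overlap with $F_i$ or with one another — this is where the exponent $C_0$ is truly pinned down.
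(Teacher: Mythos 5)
Your overall architecture matches the paper's: bound $v(F_t)$ polylogarithmically, bound the per-step branching by a constant in the non-degenerate case and by a polylog in the degenerate case, and then combine over the $t$ steps. But the final combination step has a genuine gap, and you misattribute the source of the ``$+1$'' in the exponent.

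The problem is the inductive closure. Writing $D := c\lceil\ln(n)\rceil^{C_0+1}$ for your degenerate branching bound, the recursion $f(t,d)\leq A\,f(t-1,d)+D\,f(t-1,d-1)$ is fine, but plugging in the inductive hypothesis
$f(t-1,d)\leq\lceil\ln(n)\rceil^{(C_0+1)d}A^{t-1-d}$ and $f(t-1,d-1)\leq\lceil\ln(n)\rceil^{(C_0+1)(d-1)}A^{t-d}$ yields
\[
f(t,d)\;\leq\;(1+c)\,\lceil\ln(n)\rceil^{(C_0+1)d}A^{t-d},
\]
so the step overshoots by a factor $(1+c)>1$, and ``absorbing $c$ into $A$'' does not fix it: enlarging $A$ inflates the first term in the recursion just as much, and the factor $A^{t-d}$ can even be $A^0=1$ when all iterations are degenerate. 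Equivalently, the exact solution of your recursion is $f(t,d)\leq\binom{t}{d}D^d A^{t-d}$, and the $\binom{t}{d}$ factor — coming from the choice of which of the $t$ iterations are degenerate — is what you have not accounted for. Since $t<\lceil\ln(n)\rceil$, one has $\binom{t}{d}\leq t^d<\lceil\ln(n)\rceil^d$; this is precisely what the ``$+1$'' in $C_0+1$ is there to absorb. You instead attribute the ``$+1$'' to the choice of an edge of $F_i$ inside a degenerate step, but that is a per-step count, contributing only $O(\log\log n)$ extra to the exponent $C_0$ itself (since $e(F_i)\leq v(F_i)^2\leq\lceil\ln(n)\rceil^2$ up to constants), not the per-history factor $\binom{t}{d}$. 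The fix is to first prove $f(t,d)\leq\binom{t}{d}D^d A^{t-d}$ (either directly or by the recursion), fold the constant $c^d$ and the constant prefactors into a slightly larger $C_0$ using $n$ large, and then spend the ``$+1$'' on $\binom{t}{d}\leq\lceil\ln(n)\rceil^d$. Your worry in the last sentence — about how to pin down $C_0$ in case (b) when the appended copies overlap among themselves — is legitimate but secondary; the paper avoids it cleanly by specifying in one shot the whole graph spanned by the new edges together with the ordered overlap map into $F_i$, rather than building the degenerate extension copy by copy.
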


\begin{proof}
By Claim~\ref{claim:grow}, in every iteration $i$ of the while-loop of \textsc{Grow}, we add new edges onto $F_i$. These new edges span a graph on at most \[K := v_2 + (e_2 - 1)(v_1 - 2)\] vertices.  
Thus $v(F_t) \leq v_1 + Kt$. 
Let $\mathcal{G}_K$ denote the set of all graphs on at most $K$ vertices. 
In iteration $i$ of the while-loop, $F_{i+1}$ is uniquely defined if one specifies the graph $G \in \mathcal{G}_K$ with edges $E(F_{i+1})\setminus E(F_{i})$, the number $y$ of vertices in which $G$ intersects $F_i$, and two ordered lists of vertices from $G$ and $F_i$ respectively of length $y$, which specify the mapping of the intersection vertices from $G$ onto $F_i$. 
Thus, the number of ways that $F_i$ can be extended to $F_{i+1}$ is bounded from above by \[\sum_{G \in \mathcal{G}_K}\sum_{y = 2}^{v(G)}v(G)^yv(F_i)^y \leq |\mathcal{G}_K|\cdot K \cdot K^K(v_1 + Kt)^K \leq \lceil \ln(n) \rceil^{C_0},\] where $C_0$ depends only on $v_1$, $v_2$ and $e_2$, and $n$ is sufficiently large. 
The last inequality follows from the fact that $t < \ln(n)$ as otherwise the while-loop would have already ended. 

Recall that, since \textsc{Eligible-Edge} determines the exact position where to attach the copy of $H_2$, in non-degenerate iterations $i$ there are at most \[2e_2(2e_1)^{e_2-1} =: A\] ways to extend $F_i$ to $F_{i+1}$, where the coefficients of 2 correspond with the orientations of the edge of the copy of $H_2$ we attach to $F_i$ and the edges of the copies of $H_1$ we attach to said copy of $H_2$. 
Hence, for $0 \leq d \leq t < \lceil \ln(n) \rceil$, \[f(t,d) \leq \binom{t}{d}(\lceil \ln(n) \rceil^{C_0})^d \cdot A^{t-d} \leq \lceil \ln(n) \rceil^{(C_0 +1)d} \cdot A^{t-d},\] 
where the binomial coefficient corresponds to the choice of when in the $t$ iterations the $d$ degenerate iterations happen.\end{proof}

A reader of \cite{msss} may observe that Claim~\ref{claim:polylog} is not analogous to Claim 17 in \cite{msss}.
Indeed, since we have a constant number of non-degenerate iterations, instead of a unique non-degenerate iteration as in \cite{msss}, we truncated the proof of Claim 17 in order to have the appropriate bound to prove the following claim.
Let $\mathcal{F} = \mathcal{F}(H_1, H_2, n)$ be a family of representatives for the isomorphism classes of \emph{all} graphs that can be outputted by \textsc{Grow} (whether \textsc{Grow} enters the while-loop or not).
Note that the proof of the following claim requires Conjecture~\ref{conj:aconj} to be true; in particular, we need that $\hat{\mathcal{A}}(H_1, H_2, \varepsilon)$ is finite when $m_2(H_1) > m_2(H_2)$.

\begin{claim}\label{claim:conclusion}
There exists a constant $b = b(H_1,H_2) > 0$ such that for all $p \leq bn^{-1/m_2(H_1,H_2)}$, $G_{n,p}$ does not contain any graph from $\mathcal{F}(H_1,H_2,n)$ a.a.s.
\end{claim}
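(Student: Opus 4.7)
The plan is to apply Markov's inequality, bounding
\[
\mathbb{P}[\exists F \in \mathcal{F} : F \subseteq G_{n,p}] \leq \sum_{F \in \mathcal{F}} n^{v(F)} p^{e(F)} = \sum_{F \in \mathcal{F}} b^{e(F)} n^{\lambda(F)},
\]
and showing this sum is $o(1)$ for a suitable small constant $b = b(H_1,H_2) > 0$. I would partition $\mathcal{F}$ into three subfamilies $\mathcal{F}_1, \mathcal{F}_2, \mathcal{F}_3$ according to which return-statement of \textsc{Grow} produced the graph: $\mathcal{F}_1$ for the two special cases (lines~\ref{line:growspecial1end} and \ref{line:growspecial2end}), $\mathcal{F}_2$ for the return at line~\ref{line:growreturnfi}, and $\mathcal{F}_3$ for the minimising subgraphs at line~\ref{line:growreturnminsub}; each subfamily is then treated separately.

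The case $\mathcal{F}_1$ is handled by finiteness: Conjecture~\ref{conj:aconj} gives $|\hat{\mathcal{A}}| < \infty$, so $\mathcal{F}_1$ has only constantly many isomorphism classes (independent of $n$). For each $F \in \mathcal{F}_1$ I would show structurally that $F \notin \hat{\mathcal{A}}$ (in the first special case, $F$ contains a non-trivial copy of $H_1$ or $H_2$ whose edges span at least two maximal $\hat{\mathcal{A}}$-subgraphs; in the second, $F$ strictly contains a maximal $\hat{\mathcal{A}}$-subgraph). By Definition~\ref{def:a*} this forces $m(F) > m_2(H_1,H_2) + \varepsilon$, so $F$ contains a subgraph $F'$ with $\lambda(F') < 0$, and a union bound using $\mathbb{P}[F \subseteq G_{n,p}] \leq b^{e(F')} n^{\lambda(F')}$ over the finite family is $o(1)$. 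The case $\mathcal{F}_2$ is handled by the lower bound $e(F) \geq \lceil \ln n \rceil$ from Claim~\ref{claim:grow}, the upper bound $\lambda(F) \leq \lambda(H_1)$ from Claims~\ref{claim:non-degen} and \ref{claim:degenfull}, and the count $|\mathcal{F}_2| \leq \lceil \ln n \rceil^{(C_0+1)q_1} A^{\lceil \ln n \rceil}$ from Claims~\ref{claim:q_1} and \ref{claim:polylog}, giving
\[
\sum_{F \in \mathcal{F}_2} b^{e(F)} n^{\lambda(F)} \leq \lceil \ln n \rceil^{(C_0+1)q_1} \cdot (bA)^{\lceil \ln n \rceil} \cdot n^{\lambda(H_1)},
\]
which is $o(1)$ once $b < e^{-\lambda(H_1)}/A$.

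For $\mathcal{F}_3$, each $F$ satisfies $\lambda(F) \leq -\gamma$ and therefore contributes at most $n^{-\gamma}$ (using $b \leq 1$), so what remains is to bound $|\mathcal{F}_3|$. The key observation is that the isomorphism class of the minimising subgraph is invariant under non-degenerate iterations. Indeed, writing $F_{i+1} = F_i \cup L$ for the flower $L$ attached at the eligible edge $uv$, and decomposing any $F' \subseteq F_{i+1}$ as $\alpha \cup \beta$ with $\alpha \subseteq F_i$ and $\beta \subseteq L$, the identity
\[
\lambda(\alpha \cup \beta) = \lambda(\alpha) + \lambda(\beta) - |V(\alpha) \cap V(\beta)| + \frac{|E(\alpha) \cap E(\beta)|}{m_2(H_1,H_2)},
\]
combined with the strict-balancedness of $H_1$ and $H_2$ (which yields the clean identity $\lambda(L) = 2 - 1/m_2(H_1,H_2)$ together with suitable lower bounds on $\lambda$ of connected subgraphs of $L$), implies $\lambda(\alpha \cup \beta) \geq \lambda(\alpha)$ in every admissible case; the constraint $V(F_i) \cap V(L) = \{u,v\}$ keeps $|V(\alpha)\cap V(\beta)| \leq 2$ throughout, and the case $|V(\alpha)\cap V(\beta)| = 2$, $|E(\alpha)\cap E(\beta)| = 0$ is ruled out by the minimality of $\textsc{Min-Sub}(F_i)$. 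Hence the minimum of $\lambda$ over subgraphs of $F_{i+1}$ is already attained inside $F_i$, and fixing the canonical choice in \textsc{Minimising-Subgraph} to prefer the minimiser of smallest vertex count yields $\textsc{Min-Sub}(F_{i+1}) \cong \textsc{Min-Sub}(F_i)$. Consequently $\tilde{F}$ depends (up to isomorphism) only on the at most $q_1$ degenerate iterations, giving $|\mathcal{F}_3| \leq \lceil \ln n \rceil^{(C_0+1)q_1}$, and
\[
\sum_{F \in \mathcal{F}_3} b^{e(F)} n^{\lambda(F)} \leq \lceil \ln n \rceil^{(C_0+1)q_1} \cdot n^{-\gamma} = o(1).
\]

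The main obstacle is establishing the structural invariance step for $\mathcal{F}_3$. Without it, the naive bound $|\mathcal{F}_3| \leq \lceil \ln n \rceil^{(C_0+1)q_1} \cdot A^{\lceil\ln n\rceil}$ leaves a factor $n^{\ln A}$ that cannot be absorbed by $n^{-\gamma}$ in general; since $A = 2e_2(2e_1)^{e_2-1}$ can easily satisfy $\ln A > \gamma$, no choice of constant $b$ would save us by itself. This is the structural difference with the clique setting of \cite{msss}, where effectively $A = 1$. Verifying the invariance requires a careful case analysis on how $\beta \subseteq L$ can share vertices and edges with $\alpha \subseteq F_i$, leveraging both the strict-balancedness hypotheses and the restriction $V(F_i) \cap V(L) = \{u,v\}$ enforced by non-degeneracy. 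After this, the final $b = b(H_1,H_2)$ is taken as the minimum of the thresholds required by the three cases, in particular $b < \min(1, e^{-\lambda(H_1)}/A)$.
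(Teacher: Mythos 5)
Your treatment of the special-case outputs and of the $\lceil\ln n\rceil$-iteration outputs (your $\mathcal{F}_1$ and $\mathcal{F}_2$) matches the paper's argument. The divergence, and the gap, is in your $\mathcal{F}_3$ (the minimising-subgraph case).

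The paper's key observation here — which you miss — is that, by Claim~\ref{claim:grow}, a graph $F$ output by \textsc{Minimising-Subgraph} after precisely $t$ iterations still satisfies $e(F) \geq t$. With $b := (Ae)^{-\lambda(F_0)-\gamma} < 1$ one then has $b^{e(F)} \leq b^{t}$, and the factor $b^{t}$ exactly absorbs the $A^{t-d}$ in $f(t,d)$: indeed $A^{t-d}b^{t} \leq A^{t-d}(Ae)^{-(1+\gamma)t} \leq 1$ since $\lambda(F_0)\geq 1$. Summing $f(t,d)b^{t}n^{-\gamma}$ over $0\leq d\leq q_1$ and $0\leq t\leq\lceil\ln n\rceil$ then gives a bound polylogarithmic in $n$ times $n^{-\gamma}$, which is $o(1)$. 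Your assertion that ``no choice of constant $b$ would save us by itself'' is therefore a misconception; it is precisely the constant $b$, coupled to the edge-count lower bound, that saves the day. Your heuristic that in \cite{msss} ``effectively $A=1$'' is also off: the paper's $A=2e_2(2e_1)^{e_2-1}$ is larger than $1$ for cliques too, and the $b^t$ device is needed there as well.

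Instead you propose that the isomorphism class of $\textsc{Minimising-Subgraph}(F_i)$ is invariant under non-degenerate iterations, concluding $|\mathcal{F}_3|\leq\lceil\ln n\rceil^{(C_0+1)q_1}$. Even if the single-step invariance $\textsc{Min\text{-}Sub}(F_{i+1})\cong\textsc{Min\text{-}Sub}(F_i)$ were established (which requires the case analysis you acknowledge and have not carried out — including handling subgraphs of the flower that meet $\{u,v\}$ in one vertex only, or that contain a proper piece of an attached copy of $H_1$), it would not by itself yield the asserted bound on $|\mathcal{F}_3|$. The reason is that when a degenerate step $j$ occurs, $\textsc{Min\text{-}Sub}(F_{j+1})$ can depend on all of $F_j$, not merely on $\textsc{Min\text{-}Sub}(F_j)$, so the $A^{j-1-d_j}$ non-degenerate choices made before step $j$ can still proliferate the set of possible minimisers. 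You would need a stronger invariance claim to collapse the count, and no such claim is stated or argued. The paper's $e(F)\geq t$ observation renders all of this unnecessary.
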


\begin{proof}
We first consider the two special cases in lines~\ref{line:growspecial1}-\ref{line:endif} of \textsc{Grow}.
Let $\mathcal{F}_0 = \mathcal{F}_0(H_1,H_2) \subseteq \mathcal{F}$ denote the class of graphs that can be outputted by \textsc{Grow} if one of these two cases happens. 
We can see that any $F \in \mathcal{F}_0$ is either of the form \[F = \bigcup_{\substack{e \in E(T)}}\mathcal{S}_{G'}(e)\] for some graph $T \in \mathcal{T}_{G'}$, or of the form \[F = S_1 \cup S_2\] for some edge-intersecting $S_1, S_2 \in \mathcal{S}_{G'}$.
Whichever of these forms $F$ has, since every element of $\mathcal{S}_{G'}$ is $2$-connected and in $\mathcal{C}^*$, and $T$ is $2$-connected\footnote{Since $T \cong H_1$ or $T \cong H_2$ and Lemma~\ref{lemma:2connected} holds.}, 
we have that $F$ is $2$-connected and in $\mathcal{C}^*$. On the other hand, $F \subseteq G'$ is not in $\mathcal{S}_{G'}$ and thus not isomorphic to a graph in $\hat{\mathcal{A}}$.
Indeed, otherwise the graphs $S$ forming $F$ would not be in $\mathcal{S}_{G'}$ due to the maximality condition in the definition of $\mathcal{S}_{G'}$. 
It follows that $m(F) > m_2(H_1, H_2) + \varepsilon(H_1, H_2)$. Since we assumed Conjecture~\ref{conj:aconj} holds, the family $\mathcal{F}_0$ is finite. Hence Markov's inequality yields that $G_{n,p}$ contains no graph from $\mathcal{F}_0$ a.a.s.

Let $\tilde{\mathcal{F}} = \tilde{\mathcal{F}}(H_1, H_2, n)$ denote a family of representatives for the isomorphism classes of all graphs that can be the output of \textsc{Grow} with parameters $n$ and $\gamma(H_1, H_2)$ on any input instance $G'$ for which it enters the while-loop. 
Observe that $\mathcal{F} = \mathcal{F}_0 \cup \tilde{\mathcal{F}}$. Let $\mathcal{F}_1$ and $\mathcal{F}_2$ denote the classes of graphs that algorithm \textsc{Grow} can output in lines~\ref{line:growreturnfi} and \ref{line:growreturnminsub}, respectively. 
For each $F \in \mathcal{F}_1$, we have that $e(F) \geq \ln(n)$, as $F$ was generated in $\lceil \ln(n) \rceil$ iterations, each of which introduces at least one new edge by Claim~\ref{claim:grow}.
Moreover, Claims~\ref{claim:non-degen} and \ref{claim:degenfull} imply that $\lambda(F_i)$ is non-increasing. Thus, we have that $\lambda(F) \leq \lambda(F_0)$ for all $F \in \mathcal{F}_1$. 
For all $F \in \mathcal{F}_2$, we have that $\lambda(F) \leq -\gamma$ due to the condition in line~\ref{line:growwhileconditions} of \textsc{Grow}.
Let $A := A(H_1, H_2)$ be the constant found in the proof of Claim~\ref{claim:polylog}.
Since we have chosen $F_0 \cong H_1$ as the seed of the growing procedure, it follows that for \[b := (Ae)^{-\lambda(F_0)-\gamma} \leq 1,\] the expected number of copies of graphs from $\tilde{\mathcal{F}}$ in $G_{n,p}$ with $p \leq bn^{-1/m_2(H_1,H_2)}$ is bounded by 

\begin{align}
\sum_{F \in \tilde{\mathcal{F}}}n^{v(F)}p^{e(F)} & \leq  \sum_{F \in \tilde{\mathcal{F}}}b^{e(F)}n^{\lambda(F)} \label{eq:finalstart}                           \\
                                                 & \leq \sum_{F \in \mathcal{F}_1}(eA)^{(-\lambda(F_0) - \gamma)\ln(n)}n^{\lambda(F_0)} + \sum_{F \in \mathcal{F}_2}b^{e(F)}n^{-\gamma}\nonumber  \\ 
                                                 & = \sum_{F \in \mathcal{F}_1}A^{(-\lambda(F_0) - \gamma)\ln(n)}n^{-\gamma} + \sum_{F \in \mathcal{F}_2}b^{e(F)}n^{-\gamma}. \nonumber                     
\end{align}
Observe that, since $m_2(F_2) \geq 1$, we have that \begin{equation}\label{eq:lambdaF_0}\lambda(F_0) = v_1 - \frac{e_1}{m_2(F_1,F_2)} = 2 - \frac{1}{m_2(F_2)} \geq 1\end{equation}

By Claims~\ref{claim:grow}, \ref{claim:q_1} and \ref{claim:polylog}, and \eqref{eq:lambdaF_0}, we have that
\begin{align}
     \sum_{F \in \mathcal{F}_1}A^{(-\lambda(F_0) - \gamma)\ln(n)}n^{-\gamma} & \leq \sum_{d = 0}^{\min\{t, q_1\}}f(\lceil \ln(n) \rceil,d)A^{(-\lambda(F_0) - \gamma)\ln(n)}n^{-\gamma}\label{eq:final1} \\ 
     & \leq (q_1 + 1)\lceil \ln(n) \rceil^{(C_0 + 1)q_1} \cdot A^{\lceil \ln(n) \rceil}A^{(-\lambda(F_0) - \gamma)\ln(n)}n^{-\gamma}\nonumber \\ 
     & \leq (\ln(n))^{2(C_0 + 1)q_1}n^{-\gamma}.\nonumber
\end{align}
Observe that, by Claim~\ref{claim:grow}, if some graph $F \in \mathcal{F}_2$ is the output of \textsc{Grow} after precisely $t$ iterations of the while-loop then $e(F) \geq t$. Since $b < 1$, this implies 
\begin{equation}\label{eq:befbt}
    b^{e(F)} \leq b^t
\end{equation}
for such a graph $F$.
Using \eqref{eq:befbt} and Claims~\ref{claim:grow}, \ref{claim:q_1} and \ref{claim:polylog}, we have that 
\begin{align}
    \sum_{F \in \mathcal{F}_2}b^{e(F)}n^{-\gamma} & \leq \sum_{t = 0}^{\lceil \ln(n)  \rceil}\sum_{d = 0}^{\min\{t, q_1\}}f(t,d)b^tn^{-\gamma}\label{eq:final2} \\ 
    & \leq \sum_{t = 0}^{\lceil \ln(n)  \rceil}\sum_{d = 0}^{\min\{t, q_1\}}\lceil \ln(n) \rceil^{(C_0 +1)d} \cdot A^{t-d}(Ae)^{(-\lambda(F_0)-\gamma)t}n^{-\gamma}\nonumber \\
    & \leq (\lceil \ln(n) \rceil + 1)(q_1 + 1)\lceil \ln(n) \rceil^{(C_0 +1)q_1} n^{-\gamma}\nonumber \\
    & \leq (\ln(n))^{2(C_0 + 1)q_1}n^{-\gamma}. \nonumber
\end{align}
Thus, by \eqref{eq:finalstart}, \eqref{eq:final1} and \eqref{eq:final2}, we have that $\sum_{F \in \tilde{\mathcal{F}}}n^{v(F)}p^{e(F)} = o(1)$. Consequently, Markov's inequality implies that $G_{n,p}$ a.a.s.~contains no graph from $\tilde{\mathcal{F}}$.

Combined with the earlier observation that $G_{n,p}$ a.a.s.~contains no graph from $\mathcal{F}_0$, we have that $G_{n,p}$ a.a.s.~contains no graph from $\mathcal{F} = \mathcal{F}_0 \cup \tilde{\mathcal{F}}$.\end{proof}

\begin{proofofnoerrorlemmacaseone}
Suppose that the call to \textsc{Asym-Edge-Col}$(G)$ gets stuck for some graph $G$, and consider $G' \subseteq G$ at this moment. 
Then \textsc{Grow}$(G')$ returns a copy of a graph $F \in \mathcal{F}(H_1, H_2, n)$ that is contained in $G' \subseteq G$. 
Provided Claim~\ref{claim:degenfull} holds, by Claim~\ref{claim:conclusion} this event a.a.s.~does not occur in $G = G_{n,p}$ with $p$ as claimed.
Thus \textsc{Asym-Edge-Col} does not get stuck a.a.s.~and, by Lemma~\ref{lemma:errorvalid}, finds a valid colouring for $H_1$ and $H_2$ of $G_{n,p}$ with $p \leq bn^{-1/m_2(H_1,H_2)}$ a.a.s.\qed
\end{proofofnoerrorlemmacaseone}

\subsection{Proof of Claim~\ref{claim:degenfull}}\label{sec:claimdegenfull}

Our strategy for proving Claim~\ref{claim:degenfull} revolves around comparing our degenerate iteration $i$ of the while-loop of algorithm \textsc{Grow} with any non-degenerate iteration which could have occurred instead.
In accordance with this strategy, we have the following technical lemma which will be crucial in proving Claim~\ref{claim:degenfull}.\footnote{More specifically, in proving Claim~\ref{claim:degen2}, stated later.} The lemma will play the same role as Lemma 21 does in \cite{msss}, but is considerably different.
In order to state our technical lemma, we define the following families of graphs.

\begin{define}\label{def:keyfam}
\textnormal{Let $F$, $H_1$ and $H_2$ be graphs and $\hat{e} \in E(F)$. We define $\mathcal{H}(F, \hat{e}, H_1, H_2)$ to be the family of graphs constructed from $F$ in the following way: Attach a copy $H_{\hat{e}}$ of $H_2$ to $F$ such that $E(H_{\hat{e}}) \cap E(F) = \{\hat{e}\}$ and $V(H_{\hat{e}}) \cap V(F) = \hat{e}$. 
Then, for each edge $f \in E(H_{\hat{e}})\setminus\{\hat{e}\}$, attach a copy $H_f$ of $H_1$ to $F \cup H_{\hat{e}}$ such that $E(F \cup H_{\hat{e}}) \cap E(H_f) = \{f\}$ and $(V(F)\setminus \hat{e}) \cap V(H_f) = \emptyset$. }
\end{define}

\begin{figure}[!ht]
\begin{center}
\definecolor{ffqqqq}{rgb}{1,0,0}
\definecolor{qqqqff}{rgb}{0,0,1}
\definecolor{qqzzqq}{rgb}{0,0.6,0}
\definecolor{ffxfqq}{rgb}{1,0.4980392156862745,0}
\definecolor{yqqqyq}{rgb}{0.5019607843137255,0,0.5019607843137255}
\begin{tikzpicture}[line cap=round,line join=round,>=triangle 45,x=1cm,y=1cm]
\draw [line width=2pt] (6,5)-- (5,2);
\draw [line width=2pt] (6,5)-- (6,2);
\draw [line width=2pt] (6,5)-- (7,2);
\draw [line width=2pt] (9,5)-- (8,2);
\draw [line width=2pt] (9,5)-- (9,2);
\draw [line width=2pt] (9,5)-- (10,2);
\draw (7.25,1.75) node[anchor=north west] {\Large $F$};
\draw [line width=2pt] (6,5)-- (9,5);
\draw [line width=2pt,color=yqqqyq] (6,5)-- (5,7);
\draw [line width=2pt,color=ffxfqq] (5,7)-- (6,9);
\draw [line width=2pt,color=qqzzqq] (6,9)-- (9,9);
\draw [line width=2pt,color=qqqqff] (9,9)-- (10,7);
\draw [line width=2pt,color=ffqqqq] (10,7)-- (9,5);
\draw [line width=2pt,color=ffqqqq, dash pattern= on 8pt off 8pt] (10,7)-- (13,7);
\draw [line width=2pt,color=qqqqff, dash pattern= on 8pt off 8pt,dash phase=8pt] (10,7)-- (13,7);
\draw [line width=2pt,color=ffqqqq] (9,5)-- (11,4);
\draw [line width=2pt,color=ffqqqq] (11,4)-- (13,5);
\draw [line width=2pt,color=ffqqqq] (13,7)-- (13,5);
\draw [line width=2pt,color=qqqqff] (13,7)-- (14,10);
\draw [line width=2pt,color=qqqqff] (14,10)-- (11,11);
\draw [line width=2pt,color=qqqqff] (11,11)-- (9,9);
\draw [line width=2pt,color=qqzzqq] (6,9)-- (6,5);
\draw [line width=2pt,color=qqzzqq] (6,5)-- (3,8);
\draw [line width=2pt,color=qqzzqq] (9,9)-- (5,11);
\draw [line width=2pt,color=qqzzqq] (3,8)-- (5,11);
\draw [line width=2pt,color=ffxfqq, dash pattern= on 8pt off 8pt] (5,7)-- (3,8);
\draw [line width=2pt,color=yqqqyq, dash pattern= on 8pt off 8pt,dash phase=8pt] (5,7)-- (3,8);
\draw [line width=2pt,color=ffxfqq] (3,8)-- (2,11);
\draw [line width=2pt,color=ffxfqq] (2,11)-- (5,11);
\draw [line width=2pt,color=ffxfqq] (5,11)-- (6,9);
\draw [line width=2pt,color=yqqqyq] (3,8)-- (2,6);
\draw [line width=2pt,color=yqqqyq] (3,4)-- (2,6);
\draw [line width=2pt,color=yqqqyq] (3,4)-- (6,5);
\draw [color=yqqqyq](3.25,6.25) node[anchor=north west] {\Large $H_{f_5}$};
\draw [color=qqqqff](11.08,9.25) node[anchor=north west] {\Large $H_{f_2}$};
\draw [color=ffxfqq](2.9,10.5) node[anchor=north west] {\Large $H_{f_4}$};
\draw (7.1,7.4) node[anchor=north west] {\Large $H_{\hat{e}}$};
\draw [color=ffqqqq](10.75,6) node[anchor=north west] {\Large $H_{f_1}$};
\draw [color=qqzzqq](4.35,9.15) node[anchor=north west] {\Large $H_{f_3}$};
\draw (7.25,5) node[anchor=north west] {$\hat{e}$};
\begin{scriptsize}
\draw [fill=black] (6,5) circle (2.5pt);
\draw [fill=black] (9,5) circle (2.5pt);
\draw [fill=black] (5,7) circle (2.5pt);
\draw[color=yqqqyq] (5.7,6.25) node {$f_5$};
\draw [fill=black] (6,9) circle (2.5pt);
\draw[color=ffxfqq] (5.7,7.9) node {$f_4$};
\draw [fill=black] (9,9) circle (2.5pt);
\draw[color=qqzzqq] (7.62,8.67) node {$f_3$};
\draw [fill=black] (10,7) circle (2.5pt);
\draw[color=qqqqff] (9.2,7.9) node {$f_2$};
\draw[color=ffqqqq] (9.29,6.25) node {$f_1$};
\draw [fill=black] (13,7) circle (2.5pt);
\draw [fill=black] (11,4) circle (2.5pt);
\draw [fill=black] (13,5) circle (2.5pt);
\draw [fill=black] (14,10) circle (2.5pt);
\draw [fill=black] (11,11) circle (2.5pt);
\draw [fill=black] (3,8) circle (2.5pt);
\draw [fill=black] (5,11) circle (2.5pt);
\draw [fill=black] (2,11) circle (2.5pt);
\draw [fill=black] (2,6) circle (2.5pt);
\draw [fill=black] (3,4) circle (2.5pt);
\end{scriptsize}
\end{tikzpicture}
\caption{A graph $J \in \mathcal{H}(F, \hat{e}, C_5, C_6)\setminus \mathcal{H}^*(F, \hat{e}, C_5, C_6)$.}\label{fig:jexample}
\end{center}
\end{figure}

Notice that, during construction of a graph $J \in \mathcal{H}(F, \hat{e}, H_1, H_2)$, the edge of $H_{\hat{e}}$ intersecting at $\hat{e}$ and the edge of each copy $H_f$ of $H_1$ intersecting at an edge $f \in E(H_{\hat{e}})\setminus\{\hat{e}\}$
are not stipulated. 
That is, we may end up with different graphs after the construction process if we choose different edges of $H_{\hat{e}}$ to intersect $F$ at $\hat{e}$ and different edges of the copies $H_f$ of $H_1$ to intersect the edges in $E(H_{\hat{e}})\setminus\{\hat{e}\}$.
Observe that although $E(F \cup H_{\hat{e}}) \cap E(H_f) = \{f\}$ and $(V(F)\setminus \hat{e}) \cap V(H_f) = \emptyset$ for each $f \in E(H_{\hat{e}}) - \{\hat{e}\}$, the construction may result in one or more graphs $H_f$ intersecting $H_{\hat{e}}$ in more than two vertices, including possibly in vertices of $\hat{e}$ (e.g. $H_{f_3}$ in Figure~\ref{fig:jexample}). Also, the graphs $H_f$ may intersect with each other in vertices and/or edges (e.g.~$H_{f_1}$ and $H_{f_2}$ in Figure~\ref{fig:jexample}).

Borrowing notation and language from \cite{msss}, for any $J \in \mathcal{H}(F, \hat{e}, H_1, H_2)$ we call $V_J := V(H_{\hat{e}})\setminus \hat{e}$ the \emph{inner vertices of $J$} and $E_J := E(H_{\hat{e}})\setminus\{\hat{e}\}$ the \emph{inner edges of $J$}.\label{page:J*} 
Let $H_{\hat{e}}^{J}$ be the \emph{inner graph} on vertex set $V_J \ \dot{\cup} \ \hat{e}$ and edge set $E_J$, and observe that this graph $H_{\hat{e}}^{J}$ is isomorphic to a copy of $H_2$ minus some edge.
Further, for each copy $H_f$ of $H_1$, we define $U_J(f) := V(H_f)\setminus f$ and $D_J(f) := E(H_f) \setminus\{f\}$ and call \[U_J := \bigcup_{\substack{{f \in E_J}}} U_J(f)\] the set of \emph{outer vertices} of $J$ and \[D_J := \bigcup_{\substack{{f \in E_J}}} D_J(f)\] the set of \emph{outer edges} of $J$.
Observe that the sets $U_J(f)$ may overlap with each other and, as noted earlier, with $V(H_{\hat{e}}^{J})$. However, the sets $D_J(f)$ may overlap only with each other. 
Further, define $\mathcal{H}^*(F, \hat{e}, H_1, H_2) \subseteq \mathcal{H}(F, \hat{e}, H_1, H_2)$ such that for any $J^* \in \mathcal{H}^*(F, \hat{e}, H_1, H_2)$ we have $U_{J^*}(f_1) \cap U_{J^*}(f_2) = \emptyset$ and $D_{J^*}(f_1) \cap D_{J^*}(f_2) = \emptyset$ for all $f_1, f_2 \in E_{J^*}$, $f_1 \neq f_2$, and $U_{J^*}(f) \cap V(H_{\hat{e}}^{J^*}) = \emptyset$ for all $f \in E_{J^*}$; that is, the copies of $H_1$ are, in some sense, pairwise disjoint.
Note that each $J^* \in \mathcal{H}^*(F, \hat{e}, H_1, H_2)$ corresponds with a non-degenerate iteration $i$ of the while loop of algorithm \textsc{Grow} when $F = F_i$, $J^* = F_{i+1}$ and $\hat{e}$ is the edge chosen by \textsc{Eligible-Edge}($F_i$). This observation will be very helpful several times later.
For any $J \in \mathcal{H}(F, \hat{e}, H_1, H_2)$, define \[v^{+}(J) := |V(J)\setminus V(F)| = v(J) - v(F)\] and \[e^{+}(J) := |E(J)\setminus E(F)| = e(J) - e(F),\] and call $\frac{e^{+}(J)}{v^{+}(J)}$ the \emph{$F$-external density of $J$}.
The following lemma relates the $F$-external density of any $J^* \in \mathcal{H}^*(F, \hat{e}, H_1, H_2)$ to that of any $J \in \mathcal{H}(F, \hat{e}, H_1, H_2)\setminus \mathcal{H}^*(F, \hat{e}, H_1, H_2)$.

\begin{lem}\label{lemma:21}
Let $F$ be a graph and $\hat{e} \in E(F)$. 
Then for any $J \in \mathcal{H}(F, \hat{e}, H_1, H_2)\setminus \mathcal{H}^*(F, \hat{e}, H_1, H_2)$ and any $J^* \in \mathcal{H}^*(F, \hat{e}, H_1, H_2)$, we have \[\frac{e^{+}(J)}{v^{+}(J)} > \frac{e^{+}(J^*)}{v^{+}(J^*)}.\]
\end{lem}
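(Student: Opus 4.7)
The plan is to translate the target inequality $\frac{e^+(J)}{v^+(J)} > \frac{e^+(J^*)}{v^+(J^*)}$ into a statement about the ``collapse ratio'' induced by identifications. First, I would observe that every $J^* \in \mathcal{H}^*$ has
\[
\frac{e^+(J^*)}{v^+(J^*)} \;=\; \frac{(e_2-1)e_1}{(v_2-2)+(e_2-1)(v_1-2)},
\]
and that by the calculation already carried out in Section~\ref{sec:ainc*orinc} (using that $H_1$ is balanced with respect to $d_2(\cdot,H_2)$ and $H_2$ is $2$-balanced), this common value equals $m_2(H_1,H_2)$. In particular the right-hand side is the same for every $J^*$, so only one $J^*$ need be considered.

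Next, I would fix any $J^*\in\mathcal{H}^*$ and view $J$ as obtained from $J^*$ by a sequence of vertex (and hence possibly edge) identifications — this is legitimate because $J$ and $J^*$ are built by attaching the same number of copies of $H_1$ at the same inner edges, differing only by overlaps permitted in $\mathcal{H}\setminus\mathcal{H}^*$. Setting $\Delta v=v^+(J^*)-v^+(J)\geq 0$ and $\Delta e=e^+(J^*)-e^+(J)\geq 0$, the assumption $J\notin\mathcal{H}^*$ forces $\Delta v\geq 1$. Applying Fact~\ref{fact:ineq}(iv) with $C=m_2(H_1,H_2)$, $a=e^+(J^*)$, $b=v^+(J^*)$, $c=\Delta e$, $d=\Delta v$ (where $b>d$ since $H_{\hat e}$ contributes at least one non-identifiable inner vertex), the lemma reduces to the strict inequality
\[
\frac{\Delta e}{\Delta v} \;<\; m_2(H_1,H_2).
\]

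The core of the proof is then to bound this collapse ratio. My plan is to decompose the identifications into elementary ``unfolding'' operations and show that each one strictly decreases $e^+/v^+$; equivalently, to express $\Delta v$ and $\Delta e$ as sums indexed by overlap components and show that each component has density strictly less than $m_2(H_1,H_2)$. The structural input is that the attachment conditions $E(H_f)\cap E(F\cup H_{\hat e})=\{f\}$ force every edge identification to occur \emph{between two different outer edge sets $D_J(f_1),D_J(f_2)$}; thus each overlap component embeds into a proper subgraph of $H_1$ (or, for overlaps of a single $H_f$ with $H_{\hat e}$, a proper subgraph of $H_2$). Here the strict balancedness of $H_1$ with respect to $d_2(\cdot,H_2)$ and of $H_2$ as a strictly $2$-balanced graph, together with Proposition~\ref{prop:m2h1h2} and Fact~\ref{fact:ineq}, give precisely the strict density bounds needed to conclude.

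The main obstacle will be the combinatorial bookkeeping required when several identifications chain together — a single vertex of $J$ can be the image of more than two vertices of $J^*$, and overlaps involving $H_{\hat e}$ must be treated separately from overlaps among the petals $H_f$. Reducing such cascades to a sum of elementary splits, and verifying that each elementary split \emph{strictly} (not merely weakly) decreases the external density, is where the strict balancedness of both $H_1$ and $H_2$ must be used essentially; this is the content I expect the formal argument to hinge on, and is what distinguishes it from the clique-only argument of \cite{msss}.
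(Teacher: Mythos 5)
Your reduction of the lemma is exactly the paper's: after fixing a $J^*$ with matching attachment positions and viewing $J$ as obtained from $J^*$ by identifications, you correctly invoke Fact~\ref{fact:ineq}(iv) to reduce the claim to showing that the collapse ratio
\[
\frac{\sum_{f\in E}\Delta_e(f)}{\sum_{f\in E}\Delta_v(f)} \;<\; m_2(H_1,H_2) \;=\; \frac{e^{+}(J^*)}{v^{+}(J^*)},
\]
and you rightly note that $e^{+}(J^*)/v^{+}(J^*)$ is the same for every $J^*$. Up to here you match the paper.

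The gap is in the core step. You propose to express $\Delta v,\Delta e$ as sums over ``overlap components'' and show each component has density strictly less than $m_2(H_1,H_2)$; equivalently, that each elementary unfolding strictly decreases the external density. This is false at the per-petal granularity, and the paper explicitly flags the failure just after \eqref{eq:deltaef}. The only structural input a single petal's overlap provides is $T'(f)\subsetneq H_1$, and strict balancedness of $H_1$ with respect to $d_2(\cdot,H_2)$ gives
\[
\Delta_e(f) \;<\; m_2(H_1,H_2)\Bigl(\Delta_v(f) - (2-|f\cap V(T'(f))|) - |(V(H_{\hat e}^{-}))_f| + \tfrac{1}{m_2(H_2)}\Bigr).
\]
When $|f\cap V(T'(f))|=2$ and $|(V(H_{\hat e}^{-}))_f|=0$, the bracket carries a positive excess $1/m_2(H_2)$, so the inequality $\Delta_e(f)/\Delta_v(f)<m_2(H_1,H_2)$ can genuinely fail for such $f$. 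You also frame ``proper subgraph of $H_1$'' and ``proper subgraph of $H_2$'' as two alternative sources of strictness depending on the type of overlap, but the actual proof needs them \emph{in conjunction}, not as alternatives: the per-petal estimate always comes from $T'(f)\subsetneq H_1$, and the excess it leaves behind is absorbed at a different scale. That absorption is the missing idea. The paper's \textsc{Order-Edges} groups inner edges into sets $E_i$ forming proper subgraphs $A_i=(V_i,E_i)\subsetneq H_2$ with a carefully chosen ordering so that the first edge in each $E_i$ has $\Delta_e=0$ and every later ``bad'' edge is offset by vertices it newly contributes to $V_i$; summing over $E_i$ one gets (Claim~\ref{claim:inei}) that the accumulated excess is at most
\[
-(|V_i|-2)+\frac{|E_i|-1}{m_2(H_2)}\;\le\;0,
\]
with the final inequality being precisely strict $2$-balancedness of $H_2$ applied to $A_i\subsetneq H_2$. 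Edges outside every $E_i$ are then shown to satisfy $\Delta_e(f)=0$ (Claim~\ref{claim:notinei}). Without this aggregation and the ledger it enables, the componentwise argument you sketch does not close, and it is exactly this coupling of the two strict-balancedness hypotheses across scales that distinguishes the proof from the clique-only argument in \cite{msss}.
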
  We prove Lemma~\ref{lemma:21} in Section~\ref{sec:lemma21}.\smallskip

Claim~\ref{claim:degenfull} will follow from the next two claims. We say that algorithm \textsc{Grow} encounters a \emph{degeneracy of type 1} in iteration $i$ of the while-loop if line~\ref{line:growRline} returns true, that is, $\exists R \in \mathcal{R}_{G'}\setminus \mathcal{R}_{F_i} : |V(R) \cap V(F_i)| \geq 2$.
Note that the following claim requires that $m_2(H_1) > m_2(H_2)$.

\begin{claim}\label{claim:degen1}
There exists a constant $\kappa_1 = \kappa_1(H_1, H_2) > 0$ such that if procedure \textsc{Grow} encounters a degeneracy of type 1 in iteration $i$ of the while-loop, we have \[\lambda(F_{i+1}) \leq \lambda(F_i) - \kappa_1.\]
\end{claim}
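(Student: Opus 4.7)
The plan is to track directly how $\lambda$ changes when we glue on a copy $R$ of $H_1$, and read off the needed decrement from strict balancedness of $H_1$ with respect to $d_2(\cdot,H_2)$.

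First, let $R \in \mathcal{R}_{G'} \setminus \mathcal{R}_{F_i}$ be the copy of $H_1$ selected in line~\ref{line:growRline}, so that $F_{i+1} = F_i \cup R$, and set $J := R \cap F_i$, viewed as a subgraph of $R$, with $v_J = |V(R)\cap V(F_i)| \ge 2$ (by the type-1 condition) and $e_J = |E(R)\cap E(F_i)|$. Because $R \notin \mathcal{R}_{F_i}$, i.e.\ $R \not\subseteq F_i$, we have $J \subsetneq R \cong H_1$, so $J$ corresponds (up to isomorphism) to a proper subgraph of $H_1$ with at least two vertices. Then
\[
\lambda(F_{i+1}) - \lambda(F_i) \;=\; (v_1 - v_J) - \frac{e_1 - e_J}{m_2(H_1,H_2)}.
\]

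Next, I would rewrite this using the identity
\[
v_1 \;-\; \frac{e_1}{m_2(H_1,H_2)} \;=\; 2 - \frac{1}{m_2(H_2)},
\]
which is just a restatement of $d_2(H_1,H_2) = m_2(H_1,H_2)$ (recall that $H_1$ is balanced with respect to $d_2(\cdot,H_2)$ by hypothesis). Plugging this in,
\[
\lambda(F_{i+1}) - \lambda(F_i) \;=\; \Big(2 - \tfrac{1}{m_2(H_2)}\Big) - v_J + \frac{e_J}{m_2(H_1,H_2)}.
\]
Now $H_1$ is \emph{strictly} balanced with respect to $d_2(\cdot,H_2)$ (this is where $m_2(H_1)>m_2(H_2)$ is used, to guarantee the asymmetric form of balancedness), so for the proper subgraph $J$ with $v_J \ge 2$,
\[
\frac{e_J}{v_J - 2 + \tfrac{1}{m_2(H_2)}} \;=\; d_2(J,H_2) \;<\; m_2(H_1,H_2),
\]
and hence $e_J/m_2(H_1,H_2) < v_J - 2 + 1/m_2(H_2)$. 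Substituting this into the above expression gives $\lambda(F_{i+1}) - \lambda(F_i) < 0$.

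Finally, to upgrade strict negativity to a uniform constant decrement, I would use finiteness of $H_1$: there are only finitely many isomorphism types of proper subgraphs $J \subsetneq H_1$ with $v_J \ge 2$, and for each of them the quantity
\[
\delta(J) \;:=\; v_J - \frac{e_J}{m_2(H_1,H_2)} - \Big(2 - \frac{1}{m_2(H_2)}\Big)
\]
is a fixed positive real depending only on $H_1$ and $H_2$. Setting $\kappa_1 := \min_J \delta(J) > 0$, where the minimum ranges over these finitely many isomorphism types, yields $\lambda(F_{i+1}) \le \lambda(F_i) - \kappa_1$ as required. There is no real obstacle beyond bookkeeping; the only subtle point is remembering that $J$ is a proper subgraph of $R$ (because $R \not\subseteq F_i$) and has $v_J \ge 2$ (from the degeneracy condition), so that the asymmetric strict balancedness of $H_1$ applies with strict inequality.
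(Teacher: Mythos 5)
Your proof is correct and lands on exactly the same quantity as the paper's: writing $v_J = |V(R)\cap V(F_i)|$, $e_J = |E(R)\cap E(F_i)|$, both arguments reduce the decrement to $v_J - 2 + \tfrac{1}{m_2(H_2)} - \tfrac{e_J}{m_2(H_1,H_2)}$ and then invoke strict balancedness of $H_1$ with respect to $d_2(\cdot,H_2)$ together with finiteness of the set of proper subgraphs $J \subsetneq H_1$. The only difference is presentational: you compute $\lambda(F_{i+1}) - \lambda(F_i)$ directly from the balancedness identity $v_1 - e_1/m_2(H_1,H_2) = 2 - 1/m_2(H_2)$, whereas the paper first fixes a hypothetical non-degenerate outcome $F^* \in \mathcal{H}^*(F_i,\hat e,H_1,H_2)$, applies Claim~\ref{claim:non-degen} to assert $\lambda(F^*) = \lambda(F_i)$, and then expands $\lambda(F^*) - \lambda(F_{i+1})$, which lets the $H_2$-balancedness terms cancel out and arrive at the same expression. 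Your route is a mild streamlining (it avoids that detour, which is really only essential for Claim~\ref{claim:degen2}), but the underlying mechanism — both balancedness conditions, strict balancedness of $H_1$, and finiteness — is identical.
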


\begin{proof}
Let $F := F_i$ be the graph before the operation in line~\ref{line:growR-F_i+1} is carried out (that is, before $F_{i+1} \gets F_i \cup R$), let $R$ be the copy of $H_1$ merged with $F$ in line~\ref{line:growR-F_i+1} and let $F' := F_{i+1}$ be the output from line~\ref{line:growR-F_i+1}.
We aim to show there exists a constant $\kappa_1 = \kappa_1(H_1, H_2) > 0$ such that
\begin{equation*}
    \lambda(F) - \lambda(F') = v(F) - v(F') - \frac{e(F) - e(F')}{m_2(H_1, H_2)} \geq \kappa_1.
\end{equation*}
Choose any edge $\hat{e} \in E(F)$ (the edge $\hat{e}$ need not be in the intersection of $R$ and $F$).
Choose any $F^* \in \mathcal{H}^*(F, \hat{e}, H_1, H_2)$.
Our strategy is to compare our degenerate outcome $F'$ with $F^*$. As noted earlier, $F^*$ corresponds to a non-degenerate iteration of the while loop of algorithm \textsc{Grow} (if $\hat{e}$ was the edge chosen by \textsc{Eligible-Edge}). Then Claim~\ref{claim:non-degen} gives us that $\lambda(F) = \lambda(F^*)$. Then 
\begin{eqnarray*}
      \lambda(F) - \lambda(F') & = \lambda(F^*) - \lambda(F')
     = v(F^*) - v(F') - \frac{e(F^*) - e(F')}{m_2(H_1, H_2)}.
\end{eqnarray*}
Hence we aim to show that there exists $\kappa_1 = \kappa_1(H_1, H_2) > 0$ such that
\begin{equation}\label{eq:kappa1bound}
    v(F^*) - v(F') - \frac{e(F^*) - e(F')}{m_2(H_1, H_2)} \geq \kappa_1.
\end{equation}
Define $R'$ to be the graph with vertex set $V' := V(R) \cap V(F)$ and edge set $E' :=  E(R) \cap E(F)$, and let $v' := |V'|$ and $e' := |E'|$. Observe that $R' \subsetneq R$.
Since $F^*$ corresponds with a non-degenerate iteration of the while-loop of algorithm \textsc{Grow}, $H_2$ is (strictly) $2$-balanced and $H_1$ is (strictly) balanced with respect to $d_2(\cdot, H_2)$, we have
\begin{eqnarray}
  v(F^*) - v(F') - \frac{e(F^*) - e(F')}{m_2(H_1, H_2)} & = &  (e_2 - 1)(v_1 - 2) + (v_2 - 2) - (v_1 - v')\nonumber \\ & &  - \frac{(e_2 - 1)e_1 - (e_1 - e')}{m_2(H_1, H_2)}\nonumber \\
  & = & (e_2 - 1)(v_1 - 2) + (v_2 - 2) \nonumber \\
  & & - (e_2 - 1)\left(v_1 - 2 + \frac{1}{m_2(H_2)}\right) \nonumber \\
  & & + \frac{e_1 - e'}{m_2(H_1, H_2)} - (v_1 - v')\nonumber \\
  & = &  \frac{e_1 - e'}{m_2(H_1, H_2)} - (v_1 - v')\nonumber \\ 
  & = &  v' - 2 + \frac{1}{m_2(H_2)} - \frac{e'}{m_2(H_1, H_2)}. \label{eq:f1end}
\end{eqnarray}
Also, since \textsc{Grow} encountered a degeneracy of type 1, we must have $v' \geq 2$. 
Hence, if $e' = 0$ and $v' \geq 2$, then \[v' - 2 + \frac{1}{m_2(H_2)} - \frac{e'}{m_2(H_1, H_2)} \geq \frac{1}{m_2(H_2)} > 0.\] 
If $e' \geq 1$, then since $R$ is a copy of $H_1$, $H_1$ is \emph{strictly} balanced with respect to $d_2(\cdot, H_2)$ and $R' \subsetneq R$ with $|E(R')| = e' \geq 1$, we have that $0 < d_2(R', H_2) < m_2(H_1, H_2)$, and so 
\begin{equation}\label{eq:m2d2}
-\frac{1}{m_2(H_1, H_2)} > -\frac{1}{d_2(R', H_2)}.
\end{equation}
Then by \eqref{eq:f1end} and \eqref{eq:m2d2}, we have that 
\begin{equation*}
    v(F^*) - v(F') - \frac{e(F^*) - e(F')}{m_2(H_1, H_2)} = v' - 2 + \frac{1}{m_2(H_2)} - \frac{e'}{m_2(H_1, H_2)} > v' - 2 + \frac{1}{m_2(H_2)} - \frac{e'}{d_2(R', H_2)} = 0,
\end{equation*}
using the definition of $d_2(R', H_2)$. Thus \eqref{eq:kappa1bound} holds for \[\kappa_1 = \min_{R' \subsetneq R}
\left\{ \frac{1}{m_2(H_2)}, \ v' - 2 + \frac{1}{m_2(H_2)} - \frac{e'}{m_2(H_1, H_2)}\right\}.\]\end{proof} 
We say that algorithm \textsc{Grow} encounters a \emph{degeneracy of type 2} in iteration $i$ of the while-loop if, when we call \textsc{Extend-L}$(F_i, e, G')$, the graph $L$ found in line~\ref{line:linl*} overlaps with $F_i$ in more than 2 vertices, 
or if there exists some edge $e' \in E(L)\setminus E(F_i)$ such that the graph $R_{e'}$ found in line~\ref{line:re'} overlaps in more than 2 vertices with $F'$.
The following result corresponds to Claim 22 in \cite{msss}. As in the proof of Claim 22 in \cite{msss}, we transform $F'$ into the output of a non-degenerate iteration $F^*$ in three steps.
However, we swap the order of the latter two steps in our proof. More precisely, we transform $F'$ into a graph $F^2 \in \mathcal{H}(F_i, e, H_1, H_2)$ in the first two steps, then transform $F^2$ into a graph $F^3 := F^* \in \mathcal{H}^*(F_i, e, H_1, H_2)$. In this last step we require Lemma~\ref{lemma:21}.

\begin{claim}\label{claim:degen2}
    There exists a constant $\kappa_2 = \kappa_2(H_1, H_2) > 0$ such that if procedure \textsc{Grow} encounters a degeneracy of type 2 in iteration $i$ of the while-loop, we have \[\lambda(F_{i+1}) \leq \lambda(F_i) - \kappa_2.\]
\end{claim}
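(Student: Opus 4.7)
The plan is to compare the degenerate outcome $F' := F_{i+1}$ with some non-degenerate outcome $F^* \in \mathcal{H}^*(F_i, e, H_1, H_2)$ built on the same edge $e$ chosen by \textsc{Eligible-Edge}$(F_i)$. Since $F^*$ corresponds to a non-degenerate iteration on this edge, Claim~\ref{claim:non-degen} yields $\lambda(F^*) = \lambda(F_i)$, so it suffices to produce a constant $\kappa_2 = \kappa_2(H_1, H_2) > 0$ with $\lambda(F') \leq \lambda(F^*) - \kappa_2$. Following the approach of Claim~22 in \cite{msss} but swapping the order of the latter two transformations (so that Lemma~\ref{lemma:21} may be applied), I would construct a chain $F' \to F^1 \to F^2 \to F^*$ by successively un-identifying overlapping vertices and edges and track the change in $\lambda$ along the way.

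In Stage~1 I un-identify the vertices (and any coincidental shared edges beyond $e$) between the copy $L \in \mathcal{L}^*_{G'}$ selected in line~\ref{line:linl*} and $F_i$, producing $F^1$ with $V(L) \cap V(F_i) = e$ and $E(L) \cap E(F_i) = \{e\}$. The resulting added substructure is a proper subgraph of $H_2$ minus $e$, so by the strict $2$-balancedness of $H_2$ together with $m_2(H_2) \le m_2(H_1,H_2)$ from Proposition~\ref{prop:m2h1h2}, its $F_i$-external density is strictly smaller than $m_2(H_1,H_2)$, giving $\lambda(F^1) \geq \lambda(F')$ with strict gap whenever $L$ overlapped $F_i$ in more than two vertices. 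In Stage~2 I perform the analogous un-identification for each copy $R_{e'}$ of $H_1$ so as to enforce $(V(F_i) \setminus e) \cap V(R_{e'}) = \emptyset$, arriving at $F^2 \in \mathcal{H}(F_i, e, H_1, H_2)$. Exactly as in the proof of Claim~\ref{claim:degen1}, strict balancedness of $H_1$ with respect to $d_2(\cdot, H_2)$ forces $\lambda(F^2) \geq \lambda(F^1)$, with strict gap whenever some $R_{e'}$ shared an extra vertex with $F_i$. Finally, in Stage~3 I invoke Lemma~\ref{lemma:21}: whenever $F^2 \notin \mathcal{H}^*(F_i, e, H_1, H_2)$ we have $e^+(F^2)/v^+(F^2) > e^+(F^*)/v^+(F^*)$, and since Claim~\ref{claim:non-degen} shows $e^+(F^*)/v^+(F^*) = m_2(H_1,H_2)$, this translates into $\lambda(F^2) < \lambda(F^*)$ whenever $F'$ had overlaps among the $R_{e'}$'s or between some $R_{e'}$ and the inner vertices of $L$.

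Because the iteration is degenerate of type~2, at least one of the three stages produces a strict decrease in $\lambda$, yielding $\lambda(F') < \lambda(F^*) = \lambda(F_i)$. The main obstacle is converting these pointwise strict inequalities into a single uniform positive constant $\kappa_2$ independent of the iteration: the overlap configurations in Stages~1 and~2 correspond to (finitely many isomorphism types of) proper subgraphs of $H_2$ and $H_1$ respectively, and the configurations handled by Lemma~\ref{lemma:21} in Stage~3 likewise form a finite family determined only by $H_1$ and $H_2$. Taking the minimum strict gap over this finite collection, which is strictly positive by the strict balancedness hypotheses and by Lemma~\ref{lemma:21}, provides the desired constant $\kappa_2 = \kappa_2(H_1, H_2) > 0$ and completes the proof.
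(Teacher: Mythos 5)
Your overall strategy matches the paper's: transform $F' := F_{i+1}$ into a non-degenerate output $F^* \in \mathcal{H}^*(F_i, e, H_1, H_2)$ in three stages, use Claim~\ref{claim:non-degen} to relate $\lambda(F^*)$ to $\lambda(F_i)$, and invoke Lemma~\ref{lemma:21} in the last stage. However, there is a genuine gap in Stage~1. When you un-identify $L$ from $F_i$ to produce a copy $\hat{L}_F$ with $V(\hat{L}_F) \cap V(F_i) = e$ and $E(\hat{L}_F) \cap E(F_i) = \{e\}$, the newly created edges of $\hat{L}_F \setminus \{e\}$ have \emph{no} copies of $H_1$ attached; in the original $F'$, copies $R_{e'}$ are only appended at $e' \in E(L) \setminus E(F_i)$ (line~\ref{line:e'inL} of \textsc{Extend-L}). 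The paper's transformation~(ii) explicitly attaches fresh, pairwise-disjoint copies of $H_1$ at every edge in $E(\hat{L}_F)\setminus\{e\}$, and only with those copies does one get $F^2 \in \mathcal{H}(F_i, e, H_1, H_2)$. Without them, your $F^2$ is not in $\mathcal{H}(F_i, e, H_1, H_2)$ (the family requires a copy of $H_1$ at \emph{every} edge of $H_{\hat{e}}$ other than $\hat{e}$), so Lemma~\ref{lemma:21} cannot be applied in Stage~3 as you propose.

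This is not merely a cosmetic oversight in the $\lambda$-bookkeeping. Your Stage~1 credit is $(v(\hat{L}_F)-2) - \frac{e(\hat{L}_F)-1}{m_2(H_1,H_2)}$, whereas the paper's (with the new $H_1$-copies included) is $(v(\hat{L}_F)-2) - \frac{e(\hat{L}_F)-1}{m_2(H_2)}$. Since $m_2(H_1) > m_2(H_2)$ forces $m_2(H_1,H_2) > m_2(H_2)$ (Proposition~\ref{prop:m2h1h2}), your credit is strictly larger, and the deficit — attaching a copy of $H_1$ at a single edge changes $\lambda$ by $\frac{1}{m_2(H_1,H_2)} - \frac{1}{m_2(H_2)} < 0$ — must be paid somewhere. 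Concretely, if $L$ shares $e(L_F) \geq 2$ edges with $F_i$ but all $R_{e'}$'s are pairwise disjoint and disjoint from the inner vertices, then your $F^2$ is $F^*$ with $e(L_F)-1$ copies of $H_1$ deleted, giving $\lambda(F^2) = \lambda(F^*) + (e(L_F)-1)\bigl(\tfrac{1}{m_2(H_2)} - \tfrac{1}{m_2(H_1,H_2)}\bigr) > \lambda(F^*)$: the chain $\lambda(F') \le \lambda(F^1) \le \lambda(F^2) \le \lambda(F^*)$ breaks at the last step. A minor further point: Stage~2 should not invoke strict balancedness of $H_1$. Since line~\ref{line:growRline} returned false, each $R_{e'}$ meets $F_i$ in at most one vertex and zero edges, so un-identifying adds only new vertices (no new edges), and $\lambda$ increases by the plain vertex count, as in the paper's transformation~(i).
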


\begin{proof}
Let $F := F_i$ be the graph passed to \textsc{Extend-L} and let $F' := F_{i+1}$ be its output.
We aim to show that there exists a constant $\kappa_2 = \kappa_2(H_1, H_2) > 0$ such that 
\begin{equation}\label{eq:kappa2full}
    \lambda(F) - \lambda(F') = v(F) - v(F') - \frac{e(F) - e(F')}{m_2(H_1, H_2)} \geq \kappa_2.
\end{equation}
Recall that $F'$ would be one of a constant number of graphs if iteration $i$ was non-degenerate.
Our strategy is to transform $F'$ into the output of such a non-degenerate iteration $F^*$ in three steps
\[F' =: F^0 \overset{(i)}{\to} F^1 \overset{(ii)}{\to} F^2 \overset{(iii)}{\to} F^3 := F^*,\] with each step carefully resolving a different facet of a degeneracy of type 2. By Claim~\ref{claim:non-degen}, we have $\lambda(F) = \lambda(F^*)$, hence we have that 
\begin{eqnarray*}
      \lambda(F) - \lambda(F') & = \lambda(F^*) - \lambda(F') = \sum_{j=1}^3\left(\lambda(F^j) - \lambda(F^{j-1})\right) \\
     & = \sum_{j=1}^3\left(v(F^j) - v(F^{j-1}) - \frac{e(F^j) - e(F^{j-1})}{m_2(H_1, H_2)}\right).
\end{eqnarray*}
We shall show that there exists $\kappa_2 = \kappa_2(H_1, H_2) > 0$ such that 
\begin{equation}\label{eq:kappa2partial}
    \left(v(F^j) - v(F^{j-1}) - \frac{e(F^j) - e(F^{j-1})}{m_2(H_1, H_2)}\right) \geq \kappa_2
\end{equation}
for each $j \in \{1,2,3\}$, whenever $F^j$ and $F^{j-1}$ are not isomorphic. 
In each step we will look at a different structural property of $F'$ that may result from a degeneracy of type 2. 
We do not know the exact structure of $F'$, and so, for each $j$, step $j$ may not necessarily modify $F^{j-1}$. 
However, since $F'$ is not isomorphic to $F^*$, as $F'$ resulted from a degeneracy of type 2, we know that for at least one $j$ that $F^j$ is not isomorphic to $F^{j-1}$. This will allow us to conclude \eqref{eq:kappa2full} from \eqref{eq:kappa2partial}.

We will now analyse the graph that \textsc{Extend-L} attaches to $F$ when a degeneracy of type 2 occurs. 
First of all, \textsc{Extend-L} attaches a graph $L \cong H_2$ to $F$ such that $L \in \mathcal{L}^*_{G'}$. 
Let $x$ be the number of new vertices that are added onto $F$ when $L$ is attached, that is, $x = |V(L)\setminus (V(F) \cap V(L))|$.
Since $L$ overlaps with the edge $e$ determined by \textsc{Eligible-Edge} in line~\ref{line:eligible-edge} of \textsc{Grow}, we must have that $x \leq v_2 - 2$. 
Further, as $L \in \mathcal{L}_{G'}^*$, every edge of $L$ is covered by a copy of $H_1$.
Thus, since the condition in line~\ref{line:growRline} of \textsc{Grow} came out as false in iteration $i$, we must have that 

\begin{equation}\label{eq:uv}
    \mbox{for all}\ u,v \in V(F) \cap V(L),\ \mbox{if}\ uv \in E(L)\ \mbox{then}\ uv \in E(F).
\end{equation} 
(By Claim~\ref{claim:grow}, \eqref{eq:uv} implies that $x \geq 1$ since $F$ must be extended by at least one edge.)

Let $L' \subseteq L$ denote the subgraph of $L$ obtained by removing every edge in $E(F) \cap E(L)$. Observe that $|V(L')| = |V(L)| = v_2$ and $|E(L')| \geq 1$ (see the remark above). 
\textsc{Extend-L} attaches to each edge $e' \in E(L')$ a copy $R_{e'}$ of $H_1$ in line~\ref{line:f'f're'} such that $E(L') \cap E(R_{e'}) = \{e'\}$.  
As the condition in line~\ref{line:growRline} of \textsc{Grow} came out as false, each graph $R_{e'}$ intersects $F$ in at most one vertex and, hence, zero edges. Let \[L_R' := L' \cup \bigcup_{e' \in E(L')} R_{e'}.\] 
Then $F'$ is the same as $F \cup L_R'$, and since every graph $R_{e'}$ contains at most one vertex of $F$, we have that $E(F') = E(F)\ \dot{\cup}\ E(L_R')$. Therefore, \[e(F') - e(F) = e(L_R').\]
Observe that $|V(F) \cap V(L')| = v_2 - x$ and so
\begin{alignat*}{2}
      v(F') - v(F)  & = v(L_R') - |V(F) \cap V(L_R')| \\
                    & = v(L_R') - (v_2 - x) - |V(F) \cap (V(L_R')\setminus V(L'))|. 
\end{alignat*}
\textbf{Transformation (i): $F^0 \to F^1$.} If $|V(F) \cap (V(L_R')\setminus V(L'))| \geq 1$, then we apply transformation (i), mapping $F^0$ to $F^1$:
For each vertex $v \in V(F) \cap (V(L_R')\setminus V(L'))$, transformation (i) introduces a new vertex $v'$. 
Every edge incident to $v$ in $E(F)$ remains connected to $v$ and all those edges incident to $v$ in $E(L_R')$ are redirected to $v'$. 
In $L_R'$ we replace the vertices in $V(F) \cap (V(L_R')\setminus V(L'))$ with the new vertices.
So now we have $|V(F) \cap (V(L_R')\setminus V(L'))| = 0$.  
Since $E(F) \cap E(L_R') = \emptyset$, the output of this transformation is uniquely defined. 
Moreover, the structure of $L_R'$ is completely unchanged. Hence, since $|V(F) \cap (V(L_R')\setminus V(L'))| \geq 1$, and  $|E(F')| = |E(F)\ \dot{\cup}\ E(L_R')|$ remained the same after transformation (i), we have that
\begin{equation*}
    v(F^1) - v(F^{0}) - \frac{e(F^1) - e(F^{0})}{m_2(H_1, H_2)} = |V(F) \cap (V(L_R')\setminus V(L'))| \geq 1.
\end{equation*}
\textbf{Transformation (ii): $F^1 \to F^2$.} Recall the definition of $\mathcal{H}(F, e, H_1, H_2)$. If $x \leq v_2 - 3$, then we apply transformation (ii), mapping $F^1$ to $F^2$ by replacing $L_{R}'$ with a graph $L_{R}''$ such that $F \cup L_{R}'' \in \mathcal{H}(F, e, H_1, H_2)$.

If $x = v_2 - 2$, observe that already $F \cup L_{R}' \in \mathcal{H}(F, e, H_1, H_2)$ and we continue to transformation (iii). So assume $x \leq v_2 - 3$.
Consider the proper subgraph $L_F := L[V(F) \cap V(L)] \subsetneq L$ obtained by removing all $x$ vertices in $V(L) \setminus V(F)$ and their incident edges from $L$.
Observe that $v(L_F) = v_2 - x \geq 3$ and also that $L_F \subseteq F$ by \eqref{eq:uv}.
Assign labels to $V(L_F)$ so that $V(L_F) = \{y,z,w_1, \ldots, w_{v_2 - (x+2)}\}$ where $e = \{y,z\}$ and $w_1, \ldots, w_{v_2 - (x+2)}$ are arbitrarily assigned.
At the start of transformation (ii), we create $v_2 - (x+2)$ new vertices $w_1', \ldots, w'_{v_2 - (x+2)}$ and also new edges such that $\{y,z,w_1', \ldots, w'_{v_2 - (x+2)}\}$ induces a copy $\hat{L}_F$ of $L_F$, and for all $i,j \in \{1, \ldots, v_2 - (x+2)\}$, $i \neq j$, \begin{alignat*}{2}
     & \mbox{if} \ w_iw_j \in E(L_F)\ \mbox{then} \ w'_iw'_j \in E(\hat{L}_F); \\
     & \mbox{if} \ w_iy \in E(L_F)\ \mbox{then} \ w'_iy \in E(\hat{L}_F); \\
     & \mbox{if} \ w_iz \in E(L_F)\ \mbox{then} \ w'_iz \in E(\hat{L}_F); \\
     & \mbox{and} \ e = yz \in E(\hat{L}_F).
\end{alignat*} We also transform $L_R'$. For each edge in $E(L_{R}')$ incident to a vertex $w_i$ in $L_F$, redirect the edge to $w'_i$, and remove $w_1, \ldots, w_{v_2 - (x+2)}$ from $V(L_R')$. 
Hence the structure of $L_R'$ remains the same except for the vertices $w_1, \ldots, w_{v_2 - (x+2)}$ that we removed.
Define $L'' := L_R' \cup \hat{L}_F$ and observe that $V(L'') \cap V(F) = e$.  

Continuing transformation (ii), for each $e' \in E(\hat{L}_{F})\setminus \{e\}$ , attach a copy $R_{e'}$ of $H_1$ to $L''$ such that $E(R_{e'}) \cap E(L'' \cup F) = \{e'\}$ and $V(R_{e'}) \cap V(L'' \cup F) = e'$.
That is, all these new copies $R_{e'}$ of $H_1$ are, in some sense, pairwise disjoint. Observe that $E(L'') \cap E(F) = \{e\}$ and
define $$L_R'' := L'' \cup \bigcup_{e' \in E(\hat{L}_F)\setminus \{e\}} R_{e'}$$ Then $F \cup L_R'' \in \mathcal{H}(F, e, H_1, H_2)$. (See Figure~\ref{fig:trans2} for an example of transformation (ii).)
Let $F^2 := F \cup L_R''$. Then, \begin{alignat*}{2}
        \                            & v(F^2) - v(F^1) - \frac{e(F^2) - e(F^1)}{m_2(H_1, H_2)} \\
    = \                               & (e(\hat{L}_F) - 1)(v_1 - 2) + v(\hat{L}_F) - 2 - \frac{(e(\hat{L}_F) - 1)e_1}{m_2(H_1, H_2)} \\
    = \                               & v(\hat{L}_F) - 2 - \frac{(e(\hat{L}_F) - 1)}{m_2(H_2)} \\
     =  \                              & \frac{(v(\hat{L}_F) - 2)\left(m_2(H_2) - \frac{e(\hat{L}_F) - 1}{v(\hat{L}_F) - 2}\right)}{m_2(H_2)} \\
    \geq \                               & \delta_1
\end{alignat*} for some $\delta_1 = \delta_1(H_1, H_2) > 0$,
where the second equality follows from $H_1$ being (strictly) balanced with respect to $d_2(\cdot, H_2)$, the third equality follows from $v(\hat{L}_F) = v(L_F) \geq 3$ and the last inequality follows from $\hat{L}_F$ being a copy of $L_F \subsetneq L \cong H_2$ and $H_2$ being \emph{strictly} $2$-balanced.

\begin{figure}[!ht]
\begin{center}
\definecolor{ffqqqq}{rgb}{1,0,0}
\definecolor{yqqqyq}{rgb}{0.8,0,0.8}
\begin{tikzpicture}[line cap=round,line join=round,>=triangle 45,x=1cm,y=1cm]
\draw [line width=2pt,color=yqqqyq] (-8,0)-- (-7,0);
\draw [line width=2pt] (-7,0)-- (-6,0);
\draw [line width=2pt,color=yqqqyq] (-6,0)-- (-5,-1);
\draw [line width=2pt,color=yqqqyq] (-5,-1)-- (-5,-2);
\draw [line width=2pt,color=yqqqyq] (-5,-2)-- (-7,0);
\draw [line width=2pt,color=ffqqqq] (-9,1)-- (-8,0);
\draw [line width=2pt,color=ffqqqq] (-9,1)-- (-9,2);
\draw [line width=2pt,color=ffqqqq] (-9,2)-- (-8,3);
\draw [line width=2pt,color=ffqqqq] (-8,3)-- (-6,0);
\draw [line width=2pt] (-10.5,1.5)-- (-9,1);
\draw [line width=2pt] (-10.5,1.5)-- (-9,2);
\draw [line width=2pt] (-10.5,1.5)-- (-8,0);
\draw [line width=2pt] (-10.5,1.5)-- (-8,3);
\draw [line width=2pt] (-8,3)-- (-7.5,4);
\draw [line width=2pt] (-7.5,4)-- (-6,0);
\draw [line width=2pt] (-2.5,1.5)-- (-1,2);
\draw [line width=2pt] (-2.5,1.5)-- (-1,1);
\draw [line width=2pt] (-2.5,1.5)-- (0,0);
\draw [line width=2pt] (-2.5,1.5)-- (0,3);
\draw [line width=2pt,color=ffqqqq] (0,3)-- (-1,2);
\draw [line width=2pt,color=ffqqqq] (-1,2)-- (-1,1);
\draw [line width=2pt,color=ffqqqq] (-1,1)-- (0,0);
\draw [line width=2pt,color=ffqqqq] (0,3)-- (1,3);
\draw [line width=2pt,color=yqqqyq] (1,3)-- (2,2);
\draw [line width=2pt,color=yqqqyq] (2,2)-- (2,1);
\draw [line width=2pt,color=yqqqyq] (2,1)-- (1,0);
\draw [line width=2pt,color=yqqqyq] (0,0)-- (1,0);
\draw [line width=2pt] (1,0)-- (2,0);
\draw [line width=2pt] (2,0)-- (3,-1);
\draw [line width=2pt] (3,-1)-- (3,-2);
\draw [line width=2pt] (3,-2)-- (1,0);
\draw [line width=2pt] (1,0)-- (2.5,0.5);
\draw [line width=2pt] (2,1)-- (2.5,0.5);
\draw [line width=2pt] (2,1)-- (3,1.5);
\draw [line width=2pt] (2,2)-- (3,1.5);
\draw [line width=2pt] (0,3)-- (0.5,4);
\draw [line width=2pt] (1,3)-- (0.5,4);
\draw [line width=2pt] (1,3)-- (2,3);
\draw [line width=2pt] (2,2)-- (2,3);
\draw (0.5,-1.8) node[anchor=north west] {\LARGE $F$};
\draw [color=yqqqyq](0.45,2) node[anchor=north west] {\LARGE $\hat{L}_{F}$};
\draw [color=yqqqyq](-5.25,0) node[anchor=north west] {\LARGE $L_{F}$};
\draw (-7.5,-1.8) node[anchor=north west] {\LARGE $F$};
\draw [line width=2pt] (-8,0)-- (-8.5,-1.5);
\draw [line width=2pt] (-8,0)-- (-8,-1.5);
\draw [line width=2pt] (-8,0)-- (-7.5,-1.5);
\draw [line width=2pt] (-7,0)-- (-7,-1.5);
\draw [line width=2pt] (-7,0)-- (-6.5,-1.5);
\draw [line width=2pt] (-5,-2)-- (-6,-2);
\draw[color=black] (-3.85,2.25) node {\bf (ii)};
\draw [->,line width=5.2pt] (-4.5,1.5) -- (-3,1.5);
\draw [line width=2pt] (0,0)-- (-0.5,-1.5);
\draw [line width=2pt] (0,0)-- (0,-1.5);
\draw [line width=2pt] (0,0)-- (0.5,-1.5);
\draw [line width=2pt] (1,0)-- (1,-1.5);
\draw [line width=2pt] (1,0)-- (1.5,-1.5);
\draw [line width=2pt] (3,-2)-- (2,-2);
\begin{scriptsize}
\draw [fill=black] (-8,0) circle (2.5pt);
\draw[color=black] (-8,0.4) node {$y$};
\draw [fill=black] (-7,0) circle (2.5pt);
\draw[color=black] (-7,0.4) node {$z$};
\draw [fill=black] (-6,0) circle (2.5pt);
\draw[color=black] (-5.55,0) node {$w_{1}$};
\draw [fill=black] (-5,-1) circle (2.5pt);
\draw[color=black] (-4.55,-1) node {$w_{2}$};
\draw [fill=black] (-5,-2) circle (2.5pt);
\draw[color=black] (-4.55,-2) node {$w_{3}$};
\draw [fill=black] (-9,1) circle (2.5pt);
\draw [fill=black] (-9,2) circle (2.5pt);
\draw [fill=black] (-8,3) circle (2.5pt);
\draw[color=black] (-8.1,3.3) node {$a$};
\draw [fill=black] (-10.5,1.5) circle (2.5pt);
\draw [fill=black] (-7.5,4) circle (2.5pt);
\draw[color=black] (-7.65,4.3) node {$b$};
\draw [fill=black] (-2.5,1.5) circle (2.5pt);
\draw [fill=black] (-1,2) circle (2.5pt);
\draw [fill=black] (-1,1) circle (2.5pt);
\draw [fill=black] (0,0) circle (2.5pt);
\draw[color=black] (0,0.4) node {$y$};
\draw [fill=black] (0,3) circle (2.5pt);
\draw[color=black] (-0.1,3.3) node {$a$};
\draw [fill=black] (1,3) circle (2.5pt);
\draw[color=black] (0.35,4.3) node {$b$};
\draw[color=black] (1.25,3.25) node {$w'_{1}$};
\draw [fill=black] (2,2) circle (2.5pt);
\draw[color=black] (1.55,2) node {$w'_{2}$};
\draw [fill=black] (2,1) circle (2.5pt);
\draw[color=black] (1.55,1) node {$w'_{3}$};
\draw [fill=black] (1,0) circle (2.5pt);
\draw[color=black] (1,0.4) node {$z$};
\draw [fill=black] (2,0) circle (2.5pt);
\draw[color=black] (2.45,0) node {$w_1$};
\draw [fill=black] (3,-1) circle (2.5pt);
\draw[color=black] (3.45,-1) node {$w_2$};
\draw [fill=black] (3,-2) circle (2.5pt);
\draw[color=black] (3.45,-2) node {$w_3$};
\draw [fill=black] (2.5,0.5) circle (2.5pt);
\draw [fill=black] (3,1.5) circle (2.5pt);
\draw [fill=black] (0.5,4) circle (2.5pt);
\draw [fill=black] (2,3) circle (2.5pt);
\end{scriptsize}
\end{tikzpicture}
\caption{An example of transformation (ii) where $H_1 = K_3$ and $H_2 = C_8$. Observe that edges $aw_1$ and $bw_1$ are replaced by edges $aw'_1$ and $bw'_1$.}\label{fig:trans2}
\end{center}
\end{figure}

\textbf{Transformation (iii): $F^2 \to F^3$.} Recall that for any $J \in \mathcal{H}(F, \hat{e}, H_1, H_2)$, we define $v^{+}(J) := |V(J)\setminus V(F)| = v(J) - v(F)$ and $e^{+}(J) := |E(J)\setminus E(F)| = e(J) - e(F)$. Remove the edge $e$ from $E(L'')$ (and $E(L''_R)$) to give $E(L'') \cap E(F) = \emptyset$. Then \[e^+(F \cup L_R'') = e(L_R'')\] and \[v^+(F \cup L_R'') = v(L_R'') - 2.\]

If $F^2 = F \cup L_R'' \in \mathcal{H}^*(F, e, H_1, H_2)$, then transformation (iii) sets $F^3 := F^2$. Otherwise we have that $F \cup L_R'' \in \mathcal{H}(F, e, H_1, H_2)\setminus \mathcal{H}^*(F, e, H_1, H_2)$.
Let $F^3 := J^*$ where $J^*$ is any member of $\mathcal{H}^*(F, e, H_1, H_2)$ and recall that, indeed, $J^*$ is a possible output of a non-degenerate iteration of the while-loop of \textsc{Grow}. 

Then, in transformation (iii), we replace $F \cup L_R''$ with the graph $J^*$. Since $H_2$ is (strictly) $2$-balanced and $H_1$ is (strictly) balanced with respect to $d_2(\cdot, H_2)$, we have that 
\begin{equation}\label{eq:m2h1h2}
    m_2(H_1, H_2) = \frac{e_1}{v_1 - 2 + \frac{1}{m_2(H_2)}} = \frac{e_1(e_2 - 1)}{(v_1 - 2)(e_2 - 1) + v_2 - 2} = \frac{e^{+}(J^*)}{v^{+}(J^*)}.
\end{equation}
Using \eqref{eq:m2h1h2} and Lemma~\ref{lemma:21}, and that $H_2$ is (strictly) $2$-balanced and $H_1$ is (strictly) balanced with respect to $d_2(\cdot, H_2)$, we have that
\begin{alignat*}{2}
      \                              & v(F^3) - v(F^2) - \frac{e(F^3) - e(F^2)}{m_2(H_1, H_2)} \\
    = \                              & v(J^*) - v(F \cup L_R'') - \frac{e(J^*) - e(F \cup L_R'')}{m_2(H_1, H_2)}  \\
    = \                              & v^+(J^*) - v^+(F \cup L_R'') - \frac{e^+(J^*) - e^+(F \cup L_R'')}{m_2(H_1, H_2)}  \\
    \overset{L.\ref{lemma:21}}{>}  \  & v^+(J^*) - v^+(F \cup L_R'') - \frac{e^+(J^*) - e^+(J^*)\left(\frac{v^+(F \cup L_R'')}{v^+(J^*)}\right)}{m_2(H_1, H_2)}  \\
     = \                              & \left(v^+(J^*) - v^+(F \cup L_R'')\right) \left(1 - \frac{e^+(J^*)}{v^+(J^*)m_2(H_1, H_2)}\right) \\
    = \                              & 0.  \end{alignat*} Since $v^+(J^*)$, $v^+(F \cup L_R'')$, $e^+(J^*)$, $e^+(F \cup L_R'')$ and $m_2(H_1, H_2)$ only rely on $H_1$ and $H_2$, there exists $\delta_2 = \delta_2(H_1, H_2) > 0$ such that \[v(F^3) - v(F^2) - \frac{e(F^3) - e(F^2)}{m_2(H_1, H_2)} \geq \delta_2.\]
Taking \[\kappa_2 := \min\{1, \delta_1, \delta_2\}\] we see that \eqref{eq:kappa2partial} holds.\end{proof}

As stated earlier, Claim~\ref{claim:degenfull} follows from Claims~\ref{claim:degen1} and \ref{claim:degen2}. All that remains to prove Case~1 of Lemma~\ref{lemma:noerror} is to prove Lemma~\ref{lemma:21}.

\subsection{Proof of Lemma~\ref{lemma:21}}\label{sec:lemma21}

Let $J \in \mathcal{H}(F, \hat{e}, H_1, H_2)\setminus \mathcal{H}^*(F, \hat{e}, H_1, H_2)$. We choose the graph $J^* \in \mathcal{H}^*(F, \hat{e}, H_1, H_2)$ with the following properties.

\begin{itemize}
    \item[1)] The edge of the copy $H_{\hat{e}}$ of $H_2$ in $J$ attached at $\hat{e}$ and its orientation when attached are the same as the edge of the copy $H^*_{\hat{e}}$ of $H_2$ in $J^*$ attached at $\hat{e}$ and its orientation when attached; 
    \item[2)] for each $f \in E_{J}$, the edge of the copy $H_f$ of $H_1$ in $J$ attached at $f$ and its orientation when attached are the same as the edge of the copy $H^*_f$ of $H_1$ in $J^*$ attached at $f$ and its orientation when attached.
\end{itemize}

Then, recalling definitions from the beginning of Section~\ref{sec:claimdegenfull}, we have that $V_J = V_{J^*}$ and $E_J = E_{J^*}$; that is, $H_{\hat{e}}^{J} = H_{\hat{e}}^{J^*}$. 
From now on, let $V := V_J$, $E := E_J$ and $H_{\hat{e}}^{-} := H_{\hat{e}}^{J}$.
Observe for \emph{all} $J' \in \mathcal{H}^*(F, \hat{e}, H_1, H_2)$, that
\begin{equation}\label{eq:j'density}
    \frac{e^{+}(J')}{v^{+}(J')} = \frac{e_1(e_2 - 1)}{(v_1 - 2)(e_2 - 1) + v_2 - 2}.
\end{equation} 
Hence, to prove Lemma~\ref{lemma:21} it suffices to show
\begin{equation*}
\frac{e^{+}(J)}{v^{+}(J)} > \frac{e^{+}(J^*)}{v^{+}(J^*)}.
\end{equation*}
As in \cite{msss}, the intuition behind our proof is that $J^*$ can be transformed into $J$ by successively merging the copies $H^*_f$ of $H_1$ in $J^*$ with each other and vertices in $H^{-}_{\hat{e}}$.
We do this in $e_2 - 1$ steps, fixing carefully a total ordering of the inner edges $E$.
For every edge $f \in E$, we merge the attached outer copy $H^*_f$ of $H_1$ in $J^*$ with copies of $H_1$ (attached to edges preceding $f$ in our ordering) and vertices of $H^{-}_{\hat{e}}$.
Throughout, we keep track of the number of edges $\Delta_e(f)$ and the number of vertices $\Delta_v(f)$ vanishing in this process.
One could hope that the $F$-external density of $J$ increases in every step of this process, or, even slightly stronger,
that $\Delta_e(f) / \Delta_v(f) < e^{+}(J^*) / v^{+}(J^*)$. This does not necessarily hold, but we will show that there exists a collection of edge-disjoint subgraphs $A_i$ of $H^{-}_{\hat{e}}$ such that, for each $i$, the edges of $E(A_i)$ are `collectively good' for this process and every edge not belonging to one of these $A_i$ is also `good' for this process.

Recalling definitions from the beginning of Section~\ref{sec:claimdegenfull}, let $H^{-}_f := (U_J(f)\ \dot{\cup}\ f, D_J(f))$ denote the subgraph obtained by removing the edge $f$ from the copy $H_f$ of $H_1$ in $J$. 

Later, we will carefully define a (total) ordering $\prec$ on the inner edges $E$.\footnote{For clarity, for any $f \in E$, $f \not\prec f$ in this ordering $\prec$.} For such an ordering $\prec$
and each $f \in E$, define \[\Delta_E(f) := D_J(f) \cap \left(\bigcup_{f' \prec f} D_J(f')\right),\] and \[\Delta_V(f) := U_J(f) \cap \left(\left(\bigcup_{f' \prec f} U_J(f')\right) \cup V(H_{\hat{e}}^{-})\right),\] 
and set $\Delta_e(f) := |\Delta_E(f)|$ and $\Delta_v(f) := |\Delta_V(f)|$. We emphasise here that the definition of $\Delta_v(f)$ takes into account how vertices of outer vertex sets can intersect with the inner graph $H_{\hat{e}}^{-}$.
One can see that $\Delta_e(f)$ ($\Delta_v(f)$) is the number of edges (vertices) vanishing from $H^*_f$ when it is merged with preceding attached copies of $H_1$ and $V(H_{\hat{e}}^{-})$.

By our choice of $J^*$, one can quickly see that
\begin{equation}\label{eq:sume}
    e^{+}(J) = e^{+}(J^*) - \sum_{f \in E} \Delta_e(f)
\end{equation}
and
\begin{equation}\label{eq:sumv}
    v^{+}(J) = v^{+}(J^*) - \sum_{f \in E} \Delta_v(f).
\end{equation}
By \eqref{eq:sume} and \eqref{eq:sumv}, we have
\begin{equation*}
    \frac{e^{+}(J)}{v^{+}(J)} = \frac{e^{+}(J^*) - \sum_{f \in E} \Delta_e(f)}{v^{+}(J^*) - \sum_{f \in E} \Delta_v(f)}.
\end{equation*}
Then, by Fact~\ref{fact:ineq}, to show that 
\begin{equation*}
\frac{e^{+}(J)}{v^{+}(J)} > \frac{e^{+}(J^*)}{v^{+}(J^*)}
\end{equation*}
it suffices to prove that 
\begin{equation}\label{eq:conclusion2}
    \frac{\sum_{f \in E} \Delta_e(f)}{\sum_{f \in E} \Delta_v(f)} < \frac{e^{+}(J^*)}{v^{+}(J^*)}.\footnote{Note that $\sum_{f \in E} \Delta_v(f) \geq 1$ as otherwise $J = J^*$.}
\end{equation}
To show \eqref{eq:conclusion2}, we will now carefully order the edges of $E$ using an algorithm \textsc{Order-Edges} (Figure~\ref{orderedgesfig}).
The algorithm takes as input the graph $H_{\hat{e}}^{-} = (V \ \dot{\cup} \ \hat{e},E)$ and outputs a stack $s$ containing every edge from $E$ and a collection of edge-disjoint edge sets $E_i$ in $E$ and (not necessarily disjoint) vertex sets $V_i$ in $V \ \dot{\cup} \ \hat{e}$. 
We take our total ordering $\prec$ of $E$ to be that induced by the order in which edges of $E$ were placed onto the stack $s$ (that is, $f \prec f'$ if and only if $f$ was placed onto the stack $s$ before $f'$).
Also, for each $i$, we define $A_i := (V_i, E_i)$ to be the graph on vertex set $V_i$ and edge set $E_i$ and observe that $A_i \subsetneq H_2$.
We will utilise this ordering and our choice of $E_i$ and $V_i$ for each $i$, alongside that $H_2$ is (strictly) $2$-balanced and $H_1$ is \emph{strictly} balanced with respect to $d_2(\cdot, H_2)$, in order to conclude \eqref{eq:conclusion2}. 

\begin{figure}
\begin{algorithmic}[1]
\Procedure{\sc Order-Edges}{$H_{\hat{e}}^{-} = (V\  \dot{\cup}\  \hat{e},E)$}
    \State $s\gets$ {\sc empty-stack}()\label{line:oes}
    \ForAll {$i \in [\lfloor e_2/2 \rfloor]$}\label{line:oeforalli}
        \State $E_i \gets \emptyset$\label{line:oeej}
        \State $V_i \gets \emptyset$\label{line:oevj}
    \EndFor\label{line:oeendfor1}
    \State $j \gets 1$\label{line:oeigets1}
    \State $E'\gets E$\label{line:oee'getse}
    \While{$E' \neq \emptyset$}\label{line:oee'notempty}
            \If{$\exists f, f' \in E' \ \mbox{s.t.} \ (f \neq f') \land (D_J(f) \cap D_J(f') \neq \emptyset)$}\label{line:oeaiconstructionstart}
                \State $s$.{\sc push}($f$)\label{line:oespushf}
                \State $E_j$.{\sc push}($f$)\label{line:oeeipushf}
                \State $E'$.{\sc remove}($f$) \label{line:oee'removef}
                \State $V_j \gets f \cup (U_J(f) \cap V(H_{\hat{e}}^{-}))$\label{line:oeviupdate1}
                \While{$\exists \ uw \in E' \ \mbox{s.t.} \ \left(u,w \in V_j\right) \lor \left(D_J(uw) \cap \bigcup_{f \in E_j} D_J(f) \neq \emptyset\right)$}\label{line:oeaibadedgescollectedup}
                    \State $s$.{\sc push}($uw$)\label{line:oespushuw}
                    \State $E_j$.{\sc push}($uw$)\label{line:oeeipushuw}
                    \State $E'$.{\sc remove}($uw$) \label{line:oee'removeuw}
                    \State $V_j \gets \bigcup_{f \in E_j}\left(f \cup (U_J(f) \cap V(H_{\hat{e}}^{-})) \right)$\label{line:oeviupdate3}
                \EndWhile\label{line:oeendwhile1}
                \State $j \gets j + 1$\label{line:oeigetsiplus1}    
            \Else\label{line:oeelse2}
                \ForAll {$f \in E'$}\label{line:oeconclusion}
                \State $s$.{\sc push}($f$)\label{line:oespushlastf}
                 \State $E'$.{\sc remove}($f$) \label{line:oee'removelastf}
                \EndFor \label{line:oeendfor2}
            \EndIf\label{line:oeendif2}
    \EndWhile\label{line:oeendwhile2}
    \State\Return $s$\label{line:oereturns}
    \ForAll {$i \in [\lfloor e_2/2 \rfloor]$ s.t. $E_{i} \neq \emptyset$}\label{line:oelastforalli}
    \State    \Return $E_i$\label{line:oereturnej}
    \State    \Return $V_i$\label{line:oereturnvj}
    \EndFor\label{line:oelastendfor}
\EndProcedure\label{line:oeendprocedure}
\end{algorithmic}
\caption{The implementation of algorithm \textsc{Order-Edges}.}\label{orderedgesfig}
\end{figure}

Let us describe algorithm \textsc{Order-Edges} (Figure~\ref{orderedgesfig}) in detail.  In lines~\ref{line:oes}-\ref{line:oee'getse}, we initialise several parameters: a stack $s$, which we will place edges of $E$ on during our algorithm; sets $E_i$ and $V_i$ for each $i \in [\lfloor e_2/2 \rfloor]$,\footnote{\textsc{Order-Edges} can output at most $\lfloor e_2/2 \rfloor$ pairs of sets $E_i$ and $V_i$.} which we will add edges of $E$ and vertices of $V \ \dot{\cup} \ \hat{e}$ into, respectively;
an index $j$, which will correspond to whichever graph $A_j$ we consider constructing next; a set $E'$, which will keep track of those edges of $E$ we have not yet placed onto the stack $s$. 
Line~\ref{line:oee'notempty} ensures the algorithm continues until $E' = \emptyset$, that is, until all the edges of $E$ have been placed onto $s$.

In line~\ref{line:oeaiconstructionstart}, we begin constructing $A_j$ by finding a pair of distinct edges in $E'$ whose outer edge sets (in $J$) intersect.
In lines~\ref{line:oespushf}-\ref{line:oeviupdate1}, we place one of these edges, $f$, onto $s$, into $E_j$ and remove it from $E'$. We also set $V_j$ to be the two vertices in $f$ alongside any vertices in the outer vertex set $U_J(f)$ that intersect $V(H^{-}_{\hat{e}})$.

In lines~\ref{line:oeaibadedgescollectedup}-\ref{line:oeendwhile1}, we iteratively add onto $s$, into $E_j$ and remove from $E'$ any edge $uw \in E'$ which either connects two vertices previously added to $V_j$ or has an outer edge set $D_J(uw)$ that intersects the collection of outer edge sets of edges previously added to $E_j$.
We also update $V_j$ in each step of this process.

In line~\ref{line:oeigetsiplus1}, we increment $j$ in preparation for the next check at line~\ref{line:oeaiconstructionstart} (if we still have $E' \neq \emptyset$). 
If the condition in line~\ref{line:oeaiconstructionstart} fails then in lines~\ref{line:oeconclusion}-\ref{line:oeendfor2} we arbitrarily place the remaining edges of $E'$ onto the stack $s$. In line~\ref{line:oereturns}, we output the stack $s$ and in lines~\ref{line:oelastforalli}-\ref{line:oereturnvj} we output each non-empty $E_i$ and $V_i$.

We will now argue that each proper subgraph $A_i = (V_i, E_i)$ of $H_2$ and each edge placed onto $s$ in line~\ref{line:oespushlastf} are `good', in some sense, for us to conclude \eqref{eq:conclusion2}. 

For each  $f \in E$, define the graph \[T(f) := (\Delta_V(f) \ \dot{\cup} \ f, \Delta_E(f)) \subseteq H_f^- \subsetneq H_1.\]
Observe that one or both vertices of $f$ may be isolated in $T(f)$. This observation will be very useful later.

For each $i$ and $f \in E_i$, define \[(V(H_{\hat{e}}^{-}))_f := (V(H_{\hat{e}}^{-}) \cap U_J(f))\setminus \left(\bigcup_{\substack{f' \in E_i: \\ f' \prec f}}f' \cup \left(\bigcup_{\substack{f' \in E_i: \\ f' \prec f}} \left(V(H_{\hat{e}}^{-}) \cap U_J(f')\right)\right)\right) \subseteq \Delta_V(f)\] since $(V(H_{\hat{e}}^{-}))_f$ is a subset of $V(H_{\hat{e}}^{-}) \cap U_J(f)$. 
One can see that $(V(H_{\hat{e}}^{-}))_f$ consists of those vertices of $V(H_{\hat{e}}^{-})$ which are new to $V_i$ at the point when $f$ is added to $E_i$ but are not contained in $f$. Importantly for our purposes, every vertex in $(V(H_{\hat{e}}^{-}))_f$ is isolated in $T(f)$. 
Indeed, otherwise there exists $k < i$ and $f'' \in E_k$ such that $D_J(f) \cap D_J(f'') \neq \emptyset$ and $f$ would have been previously added to $E_k$ in line~\ref{line:oeeipushuw}. 

For each $f \in E$, let $T'(f)$ be the graph obtained from $T(f)$ by removing all isolated vertices from $V(T(f))$.
Crucially for our proof, since vertices of $f$ may be isolated in $T(f)$, one or more of them may not belong to $V(T'(f))$. 
Further, no vertex of $(V(H_{\hat{e}}^{-}))_f$ is contained in $V(T'(f))$.

For all $f \in E$ with $\Delta_e(f) \geq 1$, since $T'(f) \subsetneq H_1$ and $H_1$ is \emph{strictly} balanced with respect to $d_2(\cdot, H_2)$, we have that 
\begin{equation}\label{eq:centralobservation}
    m_2(H_1, H_2) > d_2(T'(f), H_2) = \frac{|E(T'(f))|}{|V(T'(f))| - 2 + \frac{1}{m_2(H_2)}}.
\end{equation} 
Recall \eqref{eq:m2h1h2}, that is, \[m_2(H_1, H_2) = \frac{e^+(J^*)}{v^+(J^*)}.\]
We now make the key observation of our proof: Since vertices of $f$ may be isolated in $T(f)$, and so not contained in $V(T'(f))$, and no vertex of $(V(H_{\hat{e}}^{-}))_f$ is contained in $V(T'(f))$, we have that 
\begin{equation}\label{eq:vt'fbound}
    |V(T'(f))| \leq \Delta_v(f) + |f \cap V(T'(f))| - |(V(H_{\hat{e}}))_f|.
\end{equation} 
Hence, from \eqref{eq:centralobservation} and \eqref{eq:vt'fbound} we have that \begin{alignat}{2}
    \Delta_e(f)     =  \ &|E(T'(f))| \nonumber\\
                    < \ &m_2(H_1, H_2)\left(\Delta_v(f) - (2 - |f \cap V(T'(f))|) - |(V(H_{\hat{e}}^{-}))_f| + \frac{1}{m_2(H_2)}\right).\label{eq:deltaef}
\end{alignat}
Edges $f$ such that $|f \cap V(T'(f))| = 2$ will be, in some sense, `bad' for us when trying to conclude \eqref{eq:conclusion2}.
Indeed, if $|(V(H_{\hat{e}}^{-}))_f| = 0$ then we may have that $\frac{\Delta_e(f)}{\Delta_v(f)} \geq m_2(H_1, H_2) = \frac{e^{+}(J^*)}{v^{+}(J^*)}$, by \eqref{eq:m2h1h2}.
However, edges $f$ such that $|f \cap V(T'(f))| \in \{0,1\}$ will be, in some sense, `good' for us when trying to conclude \eqref{eq:conclusion2}. 
Indeed, since $m_2(H_2) \geq 1$, we have for such edges $f$ that $\frac{\Delta_e(f)}{\Delta_v(f)} < m_2(H_1, H_2) = \frac{e^{+}(J^*)}{v^{+}(J^*)}$.

We show in the following claim that our choice of ordering $\prec$, our choice of each $A_i$ and the fact that $H_2$ is (strictly) $2$-balanced ensure that for each $i$ there are enough `good' edges in $A_i$ to compensate for any `bad' edges that may appear in $A_i$.
 
\begin{claim}\label{claim:inei}
For each $i$, 
\begin{equation*}
    \sum_{f \in E_i} \Delta_e(f) < m_2(H_1, H_2) \sum_{f \in E_i} \Delta_v(f).
\end{equation*}
\end{claim}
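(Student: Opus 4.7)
The plan is to combine the pointwise bound (\ref{eq:deltaef}) with the strict $2$-balancedness of $H_2$ applied to the proper subgraph $A_i \subsetneq H_2$. Partition $E_i = E_i^{0} \sqcup E_i^{+}$ where $E_i^{+} := \{f \in E_i : \Delta_e(f) \geq 1\}$ and $E_i^{0}$ is its complement in $E_i$. For $f \in E_i^{+}$ we apply the strict inequality (\ref{eq:deltaef}) and sum; for $f \in E_i^{0}$ the contribution to the left-hand side of the target inequality is zero while $\Delta_v(f)$ still contributes to the right-hand side. After rearranging, the claim reduces to showing
\[\sum_{f \in E_i^{+}} b(f) \;+\; \sum_{f \in E_i^{0}} \Delta_v(f) \;\geq\; \frac{|E_i^{+}|}{m_2(H_2)},\]
where $b(f) := \bigl(2 - |f \cap V(T'(f))|\bigr) + |(V(H^{-}_{\hat{e}}))_f|$. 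I would first verify that $|E_i^{+}| \geq 1$: as soon as the pair $f, f'$ with $D_J(f) \cap D_J(f') \neq \emptyset$ is located in line~\ref{line:oeaiconstructionstart}, the second of them is pushed onto the stack after the first and therefore inherits a non-empty $\Delta_E$. This ensures the strict inequality in (\ref{eq:deltaef}) propagates through the sum.

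Next I would invoke strict $2$-balancedness of $H_2$ on $A_i = (V_i, E_i)$. Since $A_i \subseteq H^{-}_{\hat{e}}$ misses at least the edge $\hat{e}$, it is a proper subgraph of $H_2$; moreover the inner while-loop of \textsc{Order-Edges} forces $|E_i| \geq 2$, hence $|V_i| \geq 3$. Strict $2$-balancedness then gives
\[|E_i| - 1 < m_2(H_2)(|V_i| - 2), \qquad \text{equivalently,} \qquad \frac{|E_i^{+}|}{m_2(H_2)} \;\leq\; \frac{|E_i|}{m_2(H_2)} \;<\; |V_i| - 2 + \frac{1}{m_2(H_2)}.\]
So it suffices to show that the left-hand sum in the reduced inequality is at least $|V_i| - 2 + 1/m_2(H_2)$, i.e.\ at least $|V_i \setminus \hat{e}| - 1 + 1/m_2(H_2)$.

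The core of the argument is a vertex-accounting scheme: track how each vertex of $V_i \setminus \hat{e}$ is first introduced into $V_j$ by \textsc{Order-Edges}. Either it arrives as an endpoint of some $f \in E_i$ (and hence is ``fresh'' in the sense that it is not an endpoint of any $f' \in E_i$ with $f' \prec f$ and not in any earlier $(V(H^{-}_{\hat{e}}))_{f'}$), or as a new member of $V(H^{-}_{\hat{e}}) \cap U_J(f)$, contributing to $|(V(H^{-}_{\hat{e}}))_f|$. For $f \in E_i^{+}$, I would argue that a ``fresh'' endpoint of $f$ cannot lie in $V(T'(f))$: a fresh endpoint is not touched by any edge of $\bigcup_{f' \prec f} D_J(f')$, so it is isolated in $T(f)$ and therefore excluded from $T'(f)$; hence $2 - |f \cap V(T'(f))|$ lower-bounds the number of fresh endpoint-contributions of $f$. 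For $f \in E_i^{0}$, which was added via the ``$u,w \in V_j$'' branch of line~\ref{line:oeaibadedgescollectedup}, the term $\Delta_v(f)$ accounts for any overlap with previously seen outer vertices or with $V(H^{-}_{\hat{e}})$. Summing these contributions over all $f \in E_i$ covers every vertex of $V_i \setminus \hat{e}$ except possibly the endpoints of the first edge $f_1$ (which uses the ``$-2$'' on the right), and the remaining slack of $1/m_2(H_2)$ can be harvested from the strict inequality in (\ref{eq:deltaef}) (which is strict because $T'(f) \subsetneq H_1$ and $H_1$ is \emph{strictly} balanced w.r.t.\ $d_2(\cdot, H_2)$).

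The main obstacle will be the double-counting bookkeeping, in particular the ``bad'' case where $|f \cap V(T'(f))| = 2$ and $|(V(H^{-}_{\hat{e}}))_f| = 0$ so that $b(f) = 0$. In this case the endpoints of $f$ already belong to $V(T'(f))$, so they are incident in $T(f)$ to edges from earlier $D_J(f')$'s, meaning the copies $H_{f'}$ for some $f' \prec f$ already introduced both endpoints of $f$ into $V_i$. The accounting then charges the two endpoints of $f$ against these earlier edges, and one must verify that the resulting charges are compatible with the slack produced by other edges of $E_i^{+}$ (via their own $b$-contributions) together with the $E_i^{0}$-contributions. The strict $2$-balancedness of $H_2$ is precisely what ensures that such ``bad'' edges cannot appear often enough in $A_i$ to violate the final inequality.
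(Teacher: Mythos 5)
Your reduction of the claim to the inequality
\[\sum_{f \in E_i^{+}} b(f) + \sum_{f \in E_i^{0}} \Delta_v(f) \;\geq\; \frac{|E_i^{+}|}{m_2(H_2)}\]
is correct, and your vertex-accounting scheme is essentially the one the paper uses. However, your proposed intermediate target
\[\sum_{f \in E_i^{+}} b(f) + \sum_{f \in E_i^{0}} \Delta_v(f) \;\geq\; |V_i| - 2 + \frac{1}{m_2(H_2)}\]
is too strong, and is in fact false. Consider the case where $A_i$ is a path $u$--$v$--$w$ on two edges, so $|V_i| = 3$ and $|E_i| = 2$, with $f_1 = uv$ pushed first and $f_2 = vw$ pushed second in lines~\ref{line:oeeipushf} and~\ref{line:oeeipushuw}. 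Then $f_1 \in E_i^0$ and $f_2 \in E_i^+$. It is entirely possible that $\Delta_v(f_1) = 0$ (nothing precedes $f_1$ in $E_i$, and $U_J(f_1)$ need not meet $V(H^-_{\hat{e}})$) and that $b(f_2) = 1$ (the shared vertex $v$ is hit by an edge in $D_J(f_1) \cap D_J(f_2)$ so lies in $V(T'(f_2))$, the new vertex $w$ is isolated, and $(V(H^-_{\hat{e}}))_{f_2} = \emptyset$). Then the left-hand side equals $1 = |V_i| - 2$, strictly less than $|V_i| - 2 + 1/m_2(H_2)$. Your fallback of ``harvesting the remaining slack of $1/m_2(H_2)$ from the strict inequality in \eqref{eq:deltaef}'' does not repair this: the gap $m_2(H_1,H_2) - d_2(T'(f), H_2)$ is not quantified and can be arbitrarily small depending on $T'(f)$, so you cannot extract a fixed $1/m_2(H_2)$ from it.

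The missing ingredient is the observation that the first edge $f_1$ placed into $E_i$ (line~\ref{line:oeeipushf}) always satisfies $\Delta_e(f_1) = 0$: if $D_J(f_1)$ met $\bigcup_{f' \prec f_1} D_J(f')$, then $f_1$ would already have been swept into some earlier $E_k$ in line~\ref{line:oeeipushuw}. Hence $f_1 \in E_i^0$ and $|E_i^+| \leq |E_i| - 1$. Feeding this into your step~2 gives $|E_i^+|/m_2(H_2) \leq (|E_i| - 1)/m_2(H_2) < |V_i| - 2$ by strict $2$-balancedness of $H_2$ applied to $A_i \subsetneq H_2$, so the target you actually need is the achievable
\[\sum_{f \in E_i^{+}} b(f) + \sum_{f \in E_i^{0}} \Delta_v(f) \;\geq\; |V_i| - 2,\]
which your vertex accounting does deliver. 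Note also that your classification of $E_i^0$ is incomplete: you describe every $f \in E_i^0$ as ``added via the $u,w\in V_j$ branch of line~\ref{line:oeaibadedgescollectedup},'' but $f_1$ is not added there, and it is precisely the special treatment of $f_1$ that is needed. This is exactly how the paper structures the proof: it isolates the first edge via $\Delta_e(f_1) = 0$, sums the case-specific bounds to get the $(|E_i|-1)/m_2(H_2)$ slack against $|V_i| - 2$, and invokes strict $2$-balancedness of $H_2$ once at the end.
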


\begin{proof}
Fix $i$. Firstly, as observed before, each $A_i$ is a non-empty subgraph of $H_2$. Moreover, $|E_i| \geq 2$ (by the condition in line~\ref{line:oeaiconstructionstart}). Since $H_2$ is (strictly) $2$-balanced, we have that 
\begin{equation}\label{eq:f22balancedbound}
    m_2(H_2) \geq d_2(A_i) = \frac{|E_i| - 1}{|V_i| - 2}.
\end{equation}
Now let us consider $\Delta_e(f)$ for each $f \in E_i$. For the edge $f$ added in line~\ref{line:oeeipushf}, observe that $\Delta_e(f) = 0$. 
Indeed, otherwise $\Delta_e(f) \geq 1$, and there exists $k < i$ such that $D_J(f) \cap \left(\bigcup_{f' \in E_k}D_J(f')\right) \neq \emptyset$. That is, $f$ would have been added to $E_k$ in line~\ref{line:oeeipushuw} previously in the algorithm. Since $(V(H_{\hat{e}}^{-}))_f \subseteq \Delta_V(f)$, we have that 
\begin{equation}\label{eq:aif}
    \Delta_e(f) = 0 \leq m_2(H_1, H_2)\left(\Delta_v(f) - |(V(H_{\hat{e}}^{-}))_f|\right).    
\end{equation}
Now, for each edge added in line~\ref{line:oeeipushuw}, either $v,w \in V_i$ when $uw$ was added to $E_i$, or we had $D_J(uw) \cap \left(\bigcup_{\substack{f' \in E_i: \\ f' \prec uw}}D_J(f')\right) \neq \emptyset$, that is, $\Delta_e(uw) \geq 1$, and at least one of $u$, $w$ did not belong to $V_i$ when $uw$ was added to $E_i$. In the former case, if $\Delta_e(uw) \geq 1$, then by \eqref{eq:deltaef} and that $-(2 - |f \cap V(T'(f))|) \leq 0$, we have that
\begin{equation}\label{eq:aiuwbad}
    \Delta_e(uw) < m_2(H_1, H_2)\left(\Delta_v(uw) - |(V(H_{\hat{e}}^{-}))_{uw}| + \frac{1}{m_2(H_2)}\right).
\end{equation}
Observe that \eqref{eq:aiuwbad} also holds when $\Delta_{e}(uw) = 0$. In the latter case, observe that for all $k < i$, we must have $D_{J}(uw) \cap \left(\bigcup_{f \in E_k}D_J(f)\right) = \emptyset$.
Indeed, otherwise $uw$ would have been added to some $E_k$ in line~\ref{line:oeeipushuw}. Combining this with knowing that at least one of $u$, $w$ did not belong to $V_i$ before $uw$ was added to $E_i$, we must have that one or both of $u$, $w$ are isolated in $T(uw)$.
That is, one or both do not belong to $T'(uw)$, and so $|uw \cap V(T'(uw))| \in \{0,1\}$. Thus, since $\Delta_e(uw) \geq 1$, by \eqref{eq:deltaef} we have that 
\begin{equation}\label{eq:aiuwgood1}
    \Delta_e(uw) < m_2(H_1, H_2)\left(\Delta_v(uw) - 1 - |(V(H_{\hat{e}}^{-}))_{uw}| + \frac{1}{m_2(H_2)}\right)
\end{equation} if $|uw \cap V(T'(uw))| = 1$, and

\begin{equation}\label{eq:aiuwgood2}
    \Delta_e(uw) < m_2(H_1, H_2)\left(\Delta_v(uw) - 2 - |(V(H_{\hat{e}}^{-}))_{uw}| + \frac{1}{m_2(H_2)}\right)
\end{equation} if $|uw \cap V(T'(uw))| = 0$.

In conclusion, except for the two vertices in the edge added in line~\ref{line:oeeipushf}, every time a new vertex $x$ was added to $V_i$ when some edge $f$ was added to $E_i$, either $x \in (V(H_{\hat{e}}^{-}))_{f}$, or $x \in f$ and \eqref{eq:aiuwgood1} or \eqref{eq:aiuwgood2} held, dependent on whether one or both of the vertices in $f$ were new to $V_i$. Indeed, $x$ was isolated in $T(f)$. Moreover, after the two vertices in the edge added in line~\ref{line:oeeipushf} there are $|V_i| - 2$ vertices added to $V_i$.

Hence, by \eqref{eq:aif}-\eqref{eq:aiuwgood2}, we have that 
\begin{equation}\label{eq:aifinal1}
    \sum_{f \in E_i} \Delta_e(f) < m_2(H_1, H_2)\left(\sum_{f \in E_i} \Delta_v(f) - (|V_i| - 2) + \frac{|E_i| - 1}{m_2(H_2)}\right).
\end{equation}
By \eqref{eq:f22balancedbound}, 
\begin{equation}\label{eq:aifinal2}
    - (|V_i| - 2) + \frac{|E_i| - 1}{m_2(H_2)} \leq 0.
\end{equation}
Thus, by \eqref{eq:aifinal1} and \eqref{eq:aifinal2}, we have that
\[\sum_{f \in E_i} \Delta_e(f) < m_2(H_1, H_2) \sum_{f \in E_i} \Delta_v(f)\] as desired.\end{proof}

\begin{claim}\label{claim:notinei}
For each edge $f$ placed onto $s$ in line~\ref{line:oespushlastf}, we have $\Delta_e(f) = 0$.
\end{claim}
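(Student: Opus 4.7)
The plan is to argue directly from the control flow of \textsc{Order-Edges}: any edge $f$ pushed onto $s$ in line~\ref{line:oespushlastf} must have $D_J(f)$ disjoint from $D_J(f')$ for every $f' \prec f$, which immediately yields $\Delta_e(f) = |D_J(f) \cap \bigcup_{f' \prec f} D_J(f')| = 0$ by the definition of $\Delta_E(f)$. So the whole proof reduces to a control-flow bookkeeping check, splitting the earlier edges $f'$ into those that lie in some $E_k$ and those that sit alongside $f$ in the final else-branch.

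First I would examine what the failure of the \textbf{if}-condition on line~\ref{line:oeaiconstructionstart} tells us at the moment the algorithm enters the else-branch. That condition fails precisely when no two distinct edges remaining in $E'$ have intersecting outer edge sets. Consequently, any two edges pushed in line~\ref{line:oespushlastf} have disjoint outer edge sets, which takes care of all interactions between $f$ and any $f' \prec f$ that was also pushed in the final else-branch.

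Next I would handle interactions between $f$ and edges pushed earlier in some $E_k$. The key observation is that the inner while-loop on line~\ref{line:oeaibadedgescollectedup} only terminates once no edge $uw$ remaining in $E'$ satisfies $D_J(uw) \cap \bigcup_{g \in E_j} D_J(g) \neq \emptyset$, since otherwise the loop would have transferred such a $uw$ into $E_j$. By a straightforward induction on $j$, any edge that survives in $E'$ through the $j$-th iteration of the outer while-loop has outer edge set disjoint from $\bigcup_{g \in E_k} D_J(g)$ for every $k \leq j$ with $E_k$ already formed. In particular, an edge $f$ that survives until the else-branch has $D_J(f)$ disjoint from every $D_J(f')$ with $f' \in \bigcup_k E_k$.

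Combining the two observations settles the claim: every $f' \prec f$ either lies in some earlier $E_k$ (covered by the second observation) or was pushed together with $f$ in the else-branch (covered by the first), so in all cases $D_J(f) \cap D_J(f') = \emptyset$ and hence $\Delta_e(f) = 0$. I do not foresee any genuine obstacle here; the only subtlety is noting that the inner while-condition is a disjunction, so its termination genuinely implies the outer-edge-set-disjointness half of the condition for every surviving edge, independent of the vertex-containment half.
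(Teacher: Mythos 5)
Your proposal is correct and uses essentially the same argument as the paper's proof: you split the edges $f' \prec f$ into those already captured in some $E_k$ (handled by the termination condition of the inner while-loop on line~\ref{line:oeaibadedgescollectedup}) and those pushed alongside $f$ in the final else-branch (handled by the failure of the condition on line~\ref{line:oeaiconstructionstart}), concluding disjointness of outer edge sets in both cases. The paper presents this as a proof by contradiction rather than a direct argument, but the two observations it uses are exactly yours.
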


\begin{proof}
Assume not. Then $\Delta_e(f) \geq 1$ for some edge $f$ placed onto $s$ in line~\ref{line:oespushlastf}. 
Observe that $D_J(f) \cap \left(\bigcup_{f'' \in E_i} D_J(f'')\right) = \emptyset$ for any $i$, otherwise $f$ would have been added to some $E_i$ in line~\ref{line:oeeipushuw} previously.

Thus we must have that $D_J(f) \cap D_J(f') \neq \emptyset$ for some edge $f' \neq f$ where $f'$ was placed onto $s$ also in line~\ref{line:oespushlastf}.
But then $f$ and $f'$ satisfy the condition in line~\ref{line:oeaiconstructionstart} and would both be contained in some $E_i$, contradicting that $f$ was placed onto $s$ in line~\ref{line:oespushlastf}.\end{proof}

Since $J \in \mathcal{H}(F, \hat{e}, H_1, H_2)\setminus \mathcal{H}^*(F, \hat{e}, H_1, H_2)$, we must have that $\Delta_v(f) \geq 1$ for some $f \in E$. 
Thus, if $\Delta_e(f) = 0$ for all $f \in E$ then \eqref{eq:conclusion2} holds trivially. If $\Delta_e(f) \geq 1$ for some $f \in E$, then $E_1 \neq \emptyset$ and $A_1$ is a non-empty subgraph of $H_{\hat{e}}^{-}$. 
Then, by \eqref{eq:m2h1h2} and Claims~\ref{claim:inei} and \ref{claim:notinei}, we have that 
\begin{alignat*}{2}
    \frac{\sum_{f \in E} \Delta_e(f)}{\sum_{f \in E} \Delta_v(f)} & = \frac{\sum_i\sum_{f \in E_i} \Delta_e(f) + \sum_{f \in E\setminus \cup_i E_i} \Delta_e(f)}{\sum_{f \in E} \Delta_v(f)} \\
    & < \frac{m_2(H_1, H_2)\sum_i\sum_{f \in E_i} \Delta_v(f)}{\sum_i\sum_{f \in E_i} \Delta_v(f)} \\
    & = m_2(H_1, H_2) \\
    & = \frac{e^{+}(J^*)}{v^{+}(J^*)}.
\end{alignat*}
Thus \eqref{eq:conclusion2} holds and we are done. 

\section{Case~2: $m_2(H_1, H_2) = m_2(H_2)$.}\label{sec:altcase}

In this section we prove Lemma~\ref{lemma:noerror} when $m_2(H_1) = m_2(H_2)$. 
Our proof follows that of Case~1 significantly, but uses a different algorithm \textsc{Grow-Alt}.
All definitions and notation are the same as previously unless otherwise stated. 

For any graph $F$ and edge $e \in E(F)$, we say that $e$ is \emph{eligible for extension in \textsc{Grow-Alt}} if it satisfies \[\nexists L \in \mathcal{L}_F, R \in \mathcal{R}_F \ \mbox{s.t.} \ E(L) \cap E(R) = \{e\}.\]
We note here that this is substantially different to Case~1; indeed, the set $\mathcal{C}^*$ will not feature in what follows. 
Algorithm \textsc{Grow-Alt} is shown in Figure~\ref{growsymfig}.
As with \textsc{Grow}, it has input $G' \subseteq G$, the graph that \textsc{Asym-Edge-Col} got stuck on.
\textsc{Grow-Alt} operates in a similar way to \textsc{Grow}.
In line~\ref{line:eligible}, the function \textsc{Eligible-Edge-Alt} is called which maps $F_i$ to an edge $e \in E(F_i)$ which is eligible for extension in \textsc{Grow-Alt}. 
As with \textsc{Eligible-Edge} in Case~1, this edge $e$ is selected to be unique up to isomorphism. 
We then apply a new procedure \textsc{Extend} which attaches either a graph $L \in \mathcal{L}_{G'}$ or a graph $R \in \mathcal{R}_{G'}$ that contains $e$ to $F_i$.
As in Case~1, because the condition in line~\ref{line:edgeremoval} of \textsc{Asym-Edge-Col} fails, $G' \in \mathcal{C}$.\footnote{Note that we could also conclude $G' \in \mathcal{C}^*$, however this will not be necessary as noted earlier. Also, see Section~\ref{sec:concludingremarks} for discussion on why we use \textsc{Grow-Alt} instead of \textsc{Grow} when $m_2(H_1) = m_2(H_2)$.} 

We now show that the number of edges of $F_i$ increases by at least one and that \textsc{Grow-Alt} operates as desired with a result analogous to Claim~\ref{claim:grow}. 

\begin{claim}\label{claim:growsym}
Algorithm \textsc{Grow-Alt} terminates without error on any input graph $G' \in \mathcal{C}$
that is not $(H_1, H_2)$-sparse or not an $\hat{\mathcal{A}}$-graph.\footnote{See Definitions~\ref{def:agraph} and \ref{def:h1h2sparse}.} Moreover, for every iteration $i$ of the while-loop, we have $e(F_{i+1}) > e(F_i)$.
\end{claim}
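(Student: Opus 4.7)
The plan is to mirror the proof of Claim~\ref{claim:grow}, modified to reflect that \textsc{Grow-Alt} uses the weaker notion of eligibility (referring to $\mathcal{C}$ rather than $\mathcal{C}^*$) and the simpler \textsc{Extend} procedure, which attaches a single copy of $H_1$ or $H_2$ through the eligible edge rather than a flower-like structure. The hypothesis $G'\in\mathcal{C}$ (instead of $\mathcal{C}^*$) is exactly what is needed, matching the $m_2(H_1)=m_2(H_2)$ branch of Definition~\ref{def:a*}.

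First I would dispose of any special cases at the start of \textsc{Grow-Alt} (analogous to lines~\ref{line:growspecial1}--\ref{line:endif} of \textsc{Grow}) by importing the corresponding arguments from Claim~\ref{claim:grow}: $G'$ failing to be $(H_1,H_2)$-sparse forces $\mathcal{T}_{G'}\neq\emptyset$, and the second special case's guard supplies two distinct members of $\mathcal{S}_{G'}(e)$. The seed $F_0$ can then be chosen as a copy of $H_1$ containing an edge $e$ with $|\mathcal{S}_{G'}(e)|=0$: this is possible precisely because neither special case applies and because $G'\in\mathcal{C}$ guarantees that every edge lies in some element of $\mathcal{R}_{G'}$.

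The heart of the proof will be showing that \textsc{Eligible-Edge-Alt} always succeeds. Suppose for contradiction that no edge of $F_i$ is eligible for extension in \textsc{Grow-Alt}; then for every $e\in E(F_i)$ there exists $(L,R)\in\mathcal{L}_{F_i}\times\mathcal{R}_{F_i}$ with $E(L)\cap E(R)=\{e\}$, hence $F_i\in\mathcal{C}(H_1,H_2)$. In Case~2 both $H_1$ and $H_2$ are strictly $2$-balanced with $m_2(H_2)>1$, so Lemma~\ref{lemma:2connected} yields their $2$-connectivity, and $F_i$ is $2$-connected by construction, since every extension step glues a $2$-connected graph onto $F_i$ along at least the two endpoints of a shared edge. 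By the choice of seed, $F_i\notin\hat{\mathcal{A}}$: the edge with $|\mathcal{S}_{G'}(e)|=0$ lies in $F_0\subseteq F_i\subseteq G'$. The $m_2(H_1)=m_2(H_2)$ clause of Definition~\ref{def:a*} then forces $m(F_i)>m_2(H_1,H_2)+\varepsilon$, and the verbatim calculation from Claim~\ref{claim:grow}, applied to a subgraph $\tilde F\subseteq F_i$ attaining $d(\tilde F)=m(F_i)$, produces $\lambda(\tilde F)<-\gamma$, contradicting the while-loop termination condition.

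For the ``moreover'' statement, eligibility of the chosen edge $e$ forbids any pair $(L,R)\in\mathcal{L}_{F_i}\times\mathcal{R}_{F_i}$ with $E(L)\cap E(R)=\{e\}$, whereas $G'\in\mathcal{C}$ supplies such a pair inside $\mathcal{L}_{G'}\times\mathcal{R}_{G'}$; thus at least one of $L,R$ lies in $G'$ but is not a subgraph of $F_i$, and \textsc{Extend} attaches such a copy, contributing at least one new edge (both $H_1$ and $H_2$ being connected). I do not expect a substantive obstacle here: the argument is a streamlined version of Claim~\ref{claim:grow}, simplified by the absence of flower-like attachments. The only subtlety is making sure \textsc{Extend} is designed to select a copy strictly absent from $F_i$ rather than one already contained in it, but this is precisely the information that eligibility together with $G'\in\mathcal{C}$ provides.
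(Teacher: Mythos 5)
Your proposal is correct and follows essentially the same route as the paper's proof: dispose of the special cases by reference to Claim~\ref{claim:grow}, derive the contradiction for \textsc{Eligible-Edge-Alt} via $F_i \in \mathcal{C}$, Lemma~\ref{lemma:2connected}, the $m_2(H_1)=m_2(H_2)$ branch of Definition~\ref{def:a*}, and the $\lambda(\tilde F)<-\gamma$ computation, and then observe that eligibility plus $G'\in\mathcal{C}$ forces $L\nsubseteq F_i$ or $R\nsubseteq F_i$ so that \textsc{Extend} adds at least one edge. The ``subtlety'' you flag at the end is already handled by the if/else branch inside \textsc{Extend} (use $L$ if $L\nsubseteq F$, else use $R$), which works precisely because eligibility rules out both being contained in $F_i$; this is exactly the paper's argument.
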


\begin{proof}

The special cases in lines~\ref{line:growaltforalle}-\ref{line:endifgrowalt} and the assignments in lines~\ref{line:growaltanye} and \ref{line:growaltanyr} operate successfully for the exact same reasons as given in the proof of Claim~\ref{claim:grow}. 

Next, we show that the call to \textsc{Eligible-Edge-Alt} in line~\ref{line:eligible} is always successful. 
Indeed, suppose for a contradiction that no edge in $F_i$ is eligible for extension in \textsc{Grow-Alt} for some $i \geq 0$. 
Then for every edge $e \in E(F_i)$ there exist $L \in \mathcal{L}_{F_i}$ and $R \in \mathcal{R}_{F_i}$ s.t. $E(L) \cap E(R) = \{e\}$, by definition.
Hence $F \in \mathcal{C}$. 
Recall that $H_1$ and $H_2$ satisfy the criteria of Conjecture~\ref{conj:aconj}. Hence $H_1$ and $H_2$ are strictly $2$-balanced and $m_2(H_1) = m_2(H_2) > 1$.
Then, by Lemma~\ref{lemma:2connected}, we have that $H_1$ and $H_2$ are both $2$-connected. Hence $F_i$ is $2$-connected by construction.
However, our choice of $F_0$ in line~\ref{line:growaltanyr} guarantees that $F_i$ is not in $\hat{\mathcal{A}}$. 
Indeed, the edge $e$ selected in line~\ref{line:growaltanye} satisfying $|\mathcal{S}_{G'}(e)| = 0$ is an edge of $F_0$ and $F_0 \subseteq F_i \subseteq G'$.
Thus, by the definition of $\hat{\mathcal{A}}$ and that $m_2(H_1) = m_2(H_2)$, we have that $m(F_i) > m_2(H_1, H_2) + \varepsilon$.

\begin{figure}[!ht]
\begin{algorithmic}[1]
\Procedure{Grow-Alt}{$G' = (V,E)$}
 \If {$\forall e \in E: |\mathcal{S}_{G'}(e)| = 1$}\label{line:growaltforalle}
     \State $T \gets$ any member of $\mathcal{T}_{G'}$\label{line:growaltanyt}
     \State \Return $\bigcup_{e \in E(T)}\mathcal{S}_{G'}(e)$\label{line:growaltreturnsg'e}
 \EndIf
 \If{$\exists e \in E : |\mathcal{S}_{G'}(e)| \geq 2$}\label{line:growaltsg'e2}
     \State $S_1, S_2 \gets$ any two distinct members of $\mathcal{S}_{G'}(e)$\label{line:growalt2members}
     \State \Return $S_1 \cup S_2$\label{line:growalts1s2}
 \EndIf\label{line:endifgrowalt}
 \State $e \gets$ any $e \in E : |\mathcal{S}_{G'}(e)| = 0$\label{line:growaltanye}
 \State $F_0 \gets$ any $R \in \mathcal{R}_{G'}:e \in E(R)$\label{line:growaltanyr}
 \State $i \gets 0$\label{line:growalti0}
 \While {$(i < \ln(n)) \land (\forall \tilde{F} \subseteq F_i : \lambda(\tilde{F}) > - \gamma)$}\label{line:growaltwhileconditions}
         \State $e \gets \textsc{Eligible-Edge-Alt}(F_i)$\label{line:eligible}
         \State $F_{i+1} \gets \textsc{Extend}(F_i, e, G')$\label{line:extend}    
         \State $i \gets i + 1$
 \EndWhile 
 \If {$i \geq \ln(n)$}\label{line:growaltilnn}
     \State \Return{$F_i$}\label{line:growaltreturnfi}
 \Else
     \State \Return{\textsc{Minimising-Subgraph}($F_i$)}\label{line:growaltreturnminsub}
 \EndIf
\EndProcedure
\end{algorithmic}\smallskip\smallskip

\begin{algorithmic}[1]
\Procedure{Extend}{$F,e,G'$}
 \State $\{L,R\} \gets$ any pair $\{L,R\}$ such that $L \in \mathcal{L}_{G'}$, $R \in \mathcal{R}_{G'}$ and $E(L) \cap E(R) = e$\label{line:lr}
 \If{$L \nsubseteq F$}\label{line:ifl}
 \State $F' \gets F \cup L$\label{line:fl}
 \Else
 \State $F' \gets F \cup R$\label{line:fr}
 \EndIf
 \State \Return {$F'$}\label{line:growaltreturnf'}
\EndProcedure
\end{algorithmic}
\caption{The implementation of algorithm \textsc{Grow-Alt}.}\label{growsymfig}
\end{figure}

Thus, there exists a non-empty graph $\tilde{F} \subseteq F_i$ with $d(\tilde{F}) = m(F_i)$ such that 
\begin{align*}
\lambda(\tilde{F}) = v(\tilde{F}) - \frac{e(\tilde{F})}{m_2(H_1, H_2)}   
                                                 & = e(\tilde{F})\left(\frac{1}{m(F_i)} - \frac{1}{m_2(H_1, H_2)}\right) \\
                                                 & < e(\tilde{F})\left(\frac{1}{m_2(H_1, H_2) + \varepsilon} - \frac{1}{m_2(H_1, H_2)}\right) 
                                                  = -\gamma e(\tilde{F}) \leq -\gamma.
\end{align*}
Thus \textsc{Grow} terminates in line~\ref{line:growaltwhileconditions} without calling \textsc{Eligible-Edge-Alt}. Thus every call that is made to \textsc{Eligible-Edge-Alt} is successful and returns an edge $e$. 
Since $G' \in \mathcal{C}$, the call to \textsc{Extend}$(F_i, e, G')$ is also successful and thus there exist suitable graphs $L \in \mathcal{L}_{G'}$ and $R \in \mathcal{R}_{G'}$ such that $E(L) \cap E(R) = \{e\}$, that is, line~\ref{line:lr} is successful. 

It remains to show that for every iteration $i$ of the while-loop, we have $e(F_{i+1}) > e(F_i)$. Since $e$ is eligible for extension in \textsc{Grow-Alt} for $F_i$ and $E(L) \cap E(R) = \{e\}$, we must have that either $L \nsubseteq F_i$ or $R \nsubseteq F_i$. 
Hence \textsc{Extend} outputs $F' := F \cup L$ such that $e(F_{i+1}) = e(F') > e(F_i)$ or $F' := F \cup R$ such that $e(F_{i+1}) = e(F') > e(F_i)$.\end{proof}

We consider the evolution of $F_i$ now in more detail.
We call iteration $i$ of the while-loop in algorithm \textsc{Grow-Alt} \emph{non-degenerate} if the following hold:

\begin{itemize}
 \item If $L \nsubseteq F_i$ (that is, line~\ref{line:ifl} is true), then in line~\ref{line:fl} we have $V(F_i) \cap V(L) = e$;
 \item If $L \subseteq F_i$ (that is, line~\ref{line:ifl} is false), then in line~\ref{line:fr} we have $V(F_i) \cap V(R) = e$.
\end{itemize}
Otherwise, we call iteration $i$ \emph{degenerate}. Note that, in non-degenerate iterations $i$, there are only a constant number of graphs that $F_{i+1}$ can be for any given $F_i$; indeed, \textsc{Eligible-Edge-Alt} determines the exact position where to attach $L$ or $R$ (recall that the edge $e$ found by \textsc{Eligible-Edge-Alt}($F_i$) is \emph{unique up to isomorphism of $F_i$}).

\begin{figure}[!ht]
\begin{center}
\begin{tikzpicture}[line cap=round,line join=round,>=triangle 45,x=1cm,y=1cm]
\draw [line width=2pt] (4,7)-- (4,3);
\draw [line width=2pt] (4,7)-- (2,5);
\draw [line width=2pt] (2,5)-- (4,3);
\draw [line width=2pt] (6,7)-- (6,3);
\draw [line width=2pt] (8,7)-- (8,3);
\draw [line width=2pt] (4,7)-- (6,3);
\draw [line width=2pt] (4,7)-- (8,3);
\draw [line width=2pt] (4,3)-- (6,7);
\draw [line width=2pt] (4,3)-- (8,7);
\draw [line width=2pt] (6,3)-- (8,7);
\draw [line width=2pt] (8,3)-- (6,7);
\draw [line width=2pt] (10,7)-- (10,3);
\draw [line width=2pt] (12,7)-- (12,3);
\draw [line width=2pt] (8,7)-- (10,3);
\draw [line width=2pt] (8,7)-- (12,3);
\draw [line width=2pt] (8,3)-- (10,7);
\draw [line width=2pt] (8,3)-- (12,7);
\draw [line width=2pt] (10,3)-- (12,7);
\draw [line width=2pt] (12,3)-- (10,7);
\draw [line width=2pt] (12,7)-- (14,5);
\draw [line width=2pt] (12,3)-- (14,5);
\draw [line width=0.5pt,dash pattern=on 2pt off 2pt] (1.579665672009385,7.418079578918211)-- (4.3670982412967865,7.418079578918211);
\draw [line width=0.5pt,dash pattern=on 2pt off 2pt] (4.3670982412967865,7.418079578918211)-- (4.3670982412967865,2.6470220520129017);
\draw [line width=0.5pt,dash pattern=on 2pt off 2pt] (4.3670982412967865,2.6470220520129017)-- (1.5796656720093858,2.6470220520129017);
\draw [line width=0.5pt,dash pattern=on 2pt off 2pt] (1.5796656720093858,2.6470220520129017)-- (1.579665672009385,7.418079578918211);
\draw [line width=0.5pt,dash pattern=on 4pt off 4pt] (1.2136693391219346,7.720617180014802)-- (8.34061651214996,7.720617180014802);
\draw [line width=0.5pt,dash pattern=on 4pt off 4pt] (8.34061651214996,7.720617180014802)-- (8.34061651214996,2.218214623372848);
\draw [line width=0.5pt,dash pattern=on 4pt off 4pt] (8.34061651214996,2.218214623372848)-- (1.2136693391219358,2.218214623372847);
\draw [line width=0.5pt,dash pattern=on 4pt off 4pt] (1.2136693391219358,2.218214623372847)-- (1.2136693391219346,7.720617180014802);
\draw [line width=0.5pt,dash pattern=on 6pt off 6pt] (0.851603920027077,7.971645770046527)-- (12.309167214867147,7.971645770046527);
\draw [line width=0.5pt,dash pattern=on 6pt off 6pt] (12.309167214867147,7.971645770046527)-- (12.309167214867147,1.886130908716004);
\draw [line width=0.5pt,dash pattern=on 6pt off 6pt] (12.309167214867147,1.886130908716004)-- (0.8516039200270787,1.886130908716004);
\draw [line width=0.5pt,dash pattern=on 6pt off 6pt] (0.8516039200270787,1.886130908716004)-- (0.851603920027077,7.971645770046527);
\begin{scriptsize}
\draw [fill=black] (4,7) circle (2.5pt);
\draw [fill=black] (4,3) circle (2.5pt);
\draw [fill=black] (2,5) circle (2.5pt);
\draw [fill=black] (6,7) circle (2.5pt);
\draw [fill=black] (6,3) circle (2.5pt);
\draw [fill=black] (8,7) circle (2.5pt);
\draw [fill=black] (8,3) circle (2.5pt);
\draw [fill=black] (10,7) circle (2.5pt);
\draw [fill=black] (10,3) circle (2.5pt);
\draw [fill=black] (12,7) circle (2.5pt);
\draw [fill=black] (12,3) circle (2.5pt);
\draw [fill=black] (14,5) circle (2.5pt);
\end{scriptsize}
\end{tikzpicture}
\caption{A graph $F_3$ resulting from three non-degenerate iterations for $H_1 = K_3$ and $H_2 = K_{3,3}$.}
\end{center}
\end{figure}

We now prove a result analogous to Claim~\ref{claim:non-degen} for \textsc{Grow-Alt}.

\begin{claim}\label{claim:non-degensym}
If iteration $i$ of the while-loop in procedure \textsc{Grow-Alt} is non-degenerate, we have \[\lambda(F_{i+1}) = \lambda(F_i).\]
\end{claim}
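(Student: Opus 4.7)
The plan is to mirror the strategy of Claim~\ref{claim:non-degen}, but with a simpler two-case analysis reflecting the fact that \textsc{Extend} either appends a copy of $H_2$ or a copy of $H_1$ at a single edge. First I would record the density identities available in Case~2: since $m_2(H_1) = m_2(H_2)$, Proposition~\ref{prop:m2h1h2} gives $m_2(H_1, H_2) = m_2(H_1) = m_2(H_2)$. Combined with the assumption that both $H_1$ and $H_2$ are strictly $2$-balanced, this yields the two key identities
\[\frac{e_1 - 1}{v_1 - 2} = m_2(H_1, H_2) = \frac{e_2 - 1}{v_2 - 2}.\]
These will be exactly what drives the cancellation.

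Next I would split by which branch of \textsc{Extend} is executed in iteration $i$. In the first branch, $L \nsubseteq F_i$, so $F_{i+1} = F_i \cup L$; non-degeneracy gives $V(F_i) \cap V(L) = e$, so exactly $v_2 - 2$ new vertices and $e_2 - 1$ new edges are added. In the second branch, $L \subseteq F_i$ and $F_{i+1} = F_i \cup R$; non-degeneracy now gives $V(F_i) \cap V(R) = e$, so exactly $v_1 - 2$ new vertices and $e_1 - 1$ new edges are added.

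In each branch I would then compute $\lambda(F_{i+1}) - \lambda(F_i)$ directly from the definition $\lambda(F) = v(F) - e(F)/m_2(H_1, H_2)$. For the first branch,
\[\lambda(F_{i+1}) - \lambda(F_i) = (v_2 - 2) - \frac{e_2 - 1}{m_2(H_1, H_2)} = 0,\]
using strict $2$-balancedness of $H_2$. For the second branch,
\[\lambda(F_{i+1}) - \lambda(F_i) = (v_1 - 2) - \frac{e_1 - 1}{m_2(H_1, H_2)} = 0,\]
using strict $2$-balancedness of $H_1$ together with $m_2(H_1) = m_2(H_1, H_2)$. In both cases the conclusion $\lambda(F_{i+1}) = \lambda(F_i)$ follows immediately.

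There is no real obstacle here; the only thing worth flagging is that the identity used for the $H_1$-appending case genuinely requires the Case~2 hypothesis $m_2(H_1) = m_2(H_2)$, since otherwise $m_2(H_1) > m_2(H_1, H_2)$ by Proposition~\ref{prop:m2h1h2} and the computation would fall strictly short of zero. This is precisely why algorithm \textsc{Grow-Alt}, rather than \textsc{Grow}, is used in the $m_2(H_1) = m_2(H_2)$ regime.
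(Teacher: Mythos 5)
Your proof is correct and follows essentially the same approach as the paper: it splits on which branch of \textsc{Extend} is executed, counts the $v_2-2$ vertices and $e_2-1$ edges (resp.\ $v_1-2$ and $e_1-1$) added in a non-degenerate iteration, and invokes $m_2(H_1,H_2)=m_2(H_1)=m_2(H_2)$ together with strict $2$-balancedness of $H_1$ and $H_2$ to obtain exact cancellation. The closing remark about why this calculation would fail when $m_2(H_1)>m_2(H_2)$ is also consistent with the paper's own discussion in its concluding remarks.
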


\begin{proof}
In a non-degenerate iteration, we either add $v_2 - 2$ vertices and $e_2 - 1$ edges for the copy $L$ of $H_2$ to $F_i$ or add $v_1 - 2$ vertices and $e_1 - 1$ edges for the copy $R$ of $H_1$ to $F_i$.
In the former case,
\begin{align*}
  \lambda(F_{i+1}) - \lambda(F_i)   & = v_2 - 2 - \frac{e_2 - 1}{m_2(H_1,H_2)} \\
                                 & = v_2 - 2 - \frac{e_2 - 1}{m_2(H_2)} \\
                                 & = 0,
\end{align*} where the second equality follows from $m_2(H_1,H_2) = m_2(H_2)$ (see Proposition~\ref{prop:m2h1h2}) and the last equality follows from $H_2$ being (strictly) $2$-balanced.

In the latter case,
\begin{align*}
  \lambda(F_{i+1}) - \lambda(F_i)   & = v_1 - 2 - \frac{e_1 - 1}{m_2(H_1,H_2)} \\
                                 & = v_1 - 2 - \frac{e_1 - 1}{m_2(H_1)} \\
                                 & = 0.
\end{align*} where the second equality follows from $m_2(H_1,H_2) = m_2(H_1)$ (see Proposition~\ref{prop:m2h1h2}) and the last equality follows from $H_1$ being (strictly) $2$-balanced.\end{proof} 

As in Case~1, when we have a degenerate iteration $i$, the structure of $F_{i+1}$ depends not just on $F_i$ but also on the structure of $G'$.
Indeed, if $F_i$ is extended by a copy $L$ of $H_2$ in line~\ref{line:fl} of \textsc{Extend}, then $L$ could intersect $F_i$ in a multitude of ways.
Moreover, there may be several copies of $H_2$ that satisfy the condition in line~\ref{line:lr} of \textsc{Extend}.
One could say the same for graphs added in line~\ref{line:fr} of \textsc{Extend}.
Thus, as in Case~1, degenerate iterations cause us difficulties since they enlarge the family of graphs algorithm \textsc{Grow-Alt} can return.
However, we will show that at most a constant number of degenerate iterations can happen before algorithm \textsc{Grow-Alt} terminates, allowing us to bound from above sufficiently well the number of non-isomorphic graphs \textsc{Grow-Alt} can return.  
Pivotal in proving this is the following claim, analogous to Claim~\ref{claim:degenfull}.

\begin{claim}\label{claim:degenfullsym}
There exists a constant $\kappa = \kappa(H_1,H_2) > 0$ such that if iteration $i$ of the while-loop in procedure \textsc{Grow} is degenerate then we have \[\lambda(F_{i+1}) \leq \lambda(F_i) - \kappa.\]
\end{claim}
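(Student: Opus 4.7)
My plan is to exploit the substantial simplification of \textsc{Grow-Alt} over \textsc{Grow}: each iteration attaches a single copy of either $H_1$ or $H_2$ along the chosen edge $e$, and, unwinding the definition of degeneracy, a degenerate iteration is precisely one where $|V(F_i)\cap V(H_*)|\ge 3$, where $H_*\in\{L,R\}$ denotes the attached copy. Since we are in the case $m_2(H_1)=m_2(H_2)$, Proposition~\ref{prop:m2h1h2} gives $m_2(H_*)=m_2(H_1,H_2)$ for both $H_*\in\{H_1,H_2\}$, and both are strictly $2$-balanced by hypothesis. I will therefore be able to handle the two sub-cases (attaching a copy of $H_2$ versus a copy of $H_1$) uniformly, extracting a positive constant $\kappa$ purely from strict $2$-balancedness applied to the proper subgraph $H_*\cap F_i$.

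Concretely, I will set $v':=|V(F_i)\cap V(H_*)|$ and $e':=|E(F_i)\cap E(H_*)|$, noting $v'\ge 3$ by degeneracy and $e'\ge 1$ since $e\in E(H_*)\cap E(F_i)$. A direct computation exactly parallel to the one in the proof of Claim~\ref{claim:non-degensym}, writing $v_*:=v(H_*)$ and $e_*:=e(H_*)$ and using $2$-balancedness of $H_*$ together with $m_2(H_*)=m_2(H_1,H_2)$ to absorb the usual ``zero contribution'' from a non-degenerate attachment, will give
$$\lambda(F_{i+1})-\lambda(F_i)\;=\;(v_*-v')-\frac{e_*-e'}{m_2(H_1,H_2)}\;=\;-\,\frac{(v'-2)\,m_2(H_1,H_2)-(e'-1)}{m_2(H_1,H_2)}.$$

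The key step is the observation that the intersection $H_*\cap F_i$, viewed as a subgraph of $H_*$, is a \emph{proper} subgraph: otherwise $H_*\subseteq F_i$, contradicting $e(F_{i+1})>e(F_i)$ from Claim~\ref{claim:growsym}. Since $v'\ge 3$ and $H_*$ is \emph{strictly} $2$-balanced, this forces $d_2(H_*\cap F_i)=(e'-1)/(v'-2)<m_2(H_*)=m_2(H_1,H_2)$, so the numerator in the displayed expression is strictly positive. Setting
$$\kappa\;:=\;\min_{H_*\in\{H_1,H_2\}}\ \min_{\substack{J\subsetneq H_*\\ v(J)\ge 3,\ e(J)\ge 1}}\frac{(v(J)-2)\,m_2(H_1,H_2)-(e(J)-1)}{m_2(H_1,H_2)}\;>\;0,$$
a minimum over a finite family of strictly positive quantities depending only on $H_1$ and $H_2$, will then close the argument.

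No serious obstacle is expected: this is essentially a one-line calculation, and the only point requiring care is checking that in both branches of line~\ref{line:ifl} of \textsc{Extend} (attaching $L$ when $L\nsubseteq F_i$, or attaching $R$ when $L\subseteq F_i$) the attached copy is not already a subgraph of $F_i$, which is guaranteed by Claim~\ref{claim:growsym}. The entire proof of Case~2 is therefore much shorter than Case~1 because the asymmetric ``flower'' structure (and hence Lemma~\ref{lemma:21} and Claims~\ref{claim:degen1}--\ref{claim:degen2}) is not needed: the symmetric setting $m_2(H_1)=m_2(H_2)$ collapses the accounting to the single-copy strict $2$-balancedness of $H_1$ and $H_2$.
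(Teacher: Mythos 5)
Your proof is correct and takes essentially the same approach as the paper: both identify $v'\geq 3$ from degeneracy, observe that the attached copy's intersection with $F_i$ is a \emph{proper} subgraph of $H_*$ because $e(F_{i+1})>e(F_i)$ (Claim~\ref{claim:growsym}), and apply strict $2$-balancedness of $H_*$ together with $m_2(H_*)=m_2(H_1,H_2)$ (Proposition~\ref{prop:m2h1h2}) to obtain a strictly positive constant $\kappa$ as a minimum over finitely many proper subgraphs. The only cosmetic difference is that the paper splits the $e_{L'}=1$ and $e_{L'}\geq 2$ cases and frames the computation as a comparison with a non-degenerate attachment $F_{\hat L}$, whereas you give a single unified inequality.
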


Compared to the proof of Claim~\ref{claim:degenfull}, the proof of Claim~\ref{claim:degenfullsym} is relatively straightforward.

\begin{proof}
Let $F := F_i$ be the graph before the operation in line~\ref{line:extend} (of \textsc{Grow-Alt}) is carried out and let $F' := F_{i+1}$ be the output from line~\ref{line:extend}. We aim to show there exists a constant $\kappa = \kappa(H_1, H_2) > 0$ such that
\begin{equation*}
    \lambda(F) - \lambda(F') = v(F) - v(F') - \frac{e(F) - e(F')}{m_2(H_1, H_2)} \geq \kappa
\end{equation*} whether \textsc{Extend} attached a graph $L \in \mathcal{L}_{G'}$ or $R \in \mathcal{R}_{G'}$ to $F$. We need only consider the case when $L \in \mathcal{L}_{G'}$ is the graph added by \textsc{Extend} in this non-degenerate iteration $i$ of the while-loop, as the proof of the $R \in \mathcal{R}_{G'}$ case is identical. 
Let $V_{L'} := V(F) \cap V(L)$ and $E_{L'} := E(F) \cap E(L)$ and set $v_{L'} := |V_{L'}|$ and $e_{L'} := |E_{L'}|$.
Let ${L'}$ be the graph on vertex set $V_{L'}$ and edge set $E_{L'}$. By Claim~\ref{claim:growsym}, we have $e(F') > e(F)$, hence ${L'}$ is a proper subgraph of $H_2$. Also, observe that
since iteration $i$ was degenerate, we must have that $v_{L'} \geq 3$. 
Let $F_{\hat{L}}$ be the graph produced by a non-degenerate iteration at $e$ with a copy $\hat{L}$ of $H_2$, that is, $F_{\hat{L}} := F \cup \hat{L}$ and $V(F) \cap V(\hat{L}) = e$. Our strategy is to compare $F'$ with $F_{\hat{L}}$. 
By Claim~\ref{claim:non-degensym}, $\lambda(F) = \lambda(F_{\hat{L}})$. Thus, since $m_2(H_1, H_2) = m_2(H_2)$, we have 
\begin{equation}\label{eq:f1beginsymL}
 \lambda(F) - \lambda(F') = \lambda(F_{\hat{L}}) - \lambda(F') = v_2 - 2 - (v_2 - v_{L'}) - \frac{(e_2 - 1) - (e_2 - e_{L'})}{m_2(H_1, H_2)} = v_{L'} - 2 - \frac{e_{L'} - 1}{m_2(H_1, H_2)}. 
\end{equation}
If $e_{L'} = 1$, then since $v_{L'} \geq 3$ we have $\lambda(F) - \lambda(F') \geq 1$. So assume $e_{L'} \geq 2$. 
Since $H_2$ is \emph{strictly} $2$-balanced and $L'$ is a proper subgraph of $H_2$ with $e_{L'} \geq 2$ (that is, $L'$ is not an edge), we have that \begin{equation}\label{eq:f1l'sym}
 \frac{v_{L'} - 2}{e_{L'} - 1} > \frac{1}{m_2(H_2)}.
\end{equation}
Using \eqref{eq:f1beginsymL}, \eqref{eq:f1l'sym} and that $e_{L'} \geq 2$ and $m_2(H_1, H_2) = m_2(H)$, we have 
\begin{equation*}
 \lambda(F) - \lambda(F') = (e_{L'} - 1)\left(\frac{v_{L'} - 2}{e_{L'} - 1} - \frac{1}{m_2(H_2)}\right) > 0.
\end{equation*}
Letting $\delta := \frac{1}{2} \min\left\{(e_{L'} - 1)\left(\frac{v_{L'} - 2}{e_{L'} - 1} - \frac{1}{m_2(H_2)}\right): L' \subset H_2, e_{L'} \geq 2\right\}$, we take $$\kappa := \min\{1, \delta\}.$$\end{proof}

Together, Claims~\ref{claim:non-degensym} and \ref{claim:degenfullsym} yield the following claim (analogous to Claim~\ref{claim:q_1}). 

\begin{claim}\label{claim:q_2}
There exists a constant $q_2 = q_2(H_1,H_2)$ such that algorithm \textsc{Grow-Alt} performs at most $q_2$ degenerate iterations before it terminates, regardless of the input instance $G'$.
\end{claim}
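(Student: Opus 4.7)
The plan is to imitate the proof of Claim~\ref{claim:q_1} verbatim, just swapping in the \textsc{Grow-Alt} analogues. Specifically, I would combine Claim~\ref{claim:non-degensym} (which says $\lambda$ is invariant under non-degenerate iterations of \textsc{Grow-Alt}) with Claim~\ref{claim:degenfullsym} (which yields a constant $\kappa = \kappa(H_1,H_2) > 0$ such that $\lambda(F_{i+1}) \leq \lambda(F_i) - \kappa$ in every degenerate iteration). Together these imply that the value of $\lambda(F_i)$ is monotone non-increasing in $i$ and strictly drops by at least $\kappa$ precisely on degenerate iterations.

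Next, observe that the termination condition in line~\ref{line:growaltwhileconditions} of \textsc{Grow-Alt} (namely $\forall \tilde{F} \subseteq F_i : \lambda(\tilde{F}) > -\gamma$) is violated as soon as $\lambda(F_i) \leq -\gamma$, since $F_i \subseteq F_i$. Starting from $\lambda(F_0)$ (a constant depending only on $H_1$, since $F_0 \cong H_1$), after $d$ degenerate iterations we have $\lambda(F_i) \leq \lambda(F_0) - d\kappa$. Hence if
\[
d \geq q_2 := \left\lceil \frac{\lambda(F_0) + \gamma}{\kappa} \right\rceil,
\]
then $\lambda(F_i) \leq -\gamma$, so the while-loop terminates. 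Therefore \textsc{Grow-Alt} can perform at most $q_2 = q_2(H_1, H_2)$ degenerate iterations, and since $\lambda(F_0)$, $\gamma$, and $\kappa$ all depend only on $H_1$ and $H_2$, so does $q_2$.

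There is no genuine obstacle here; the work has been done in Claims~\ref{claim:non-degensym} and \ref{claim:degenfullsym}, and this claim is a clean bookkeeping consequence. The only minor point to note is that we measure $\lambda$ on the full graph $F_i$ rather than some proper subgraph, which is fine because the while-condition guards against \emph{any} $\tilde{F} \subseteq F_i$ with $\lambda(\tilde{F}) \leq -\gamma$, and in particular $\tilde{F} = F_i$ suffices to trigger termination.
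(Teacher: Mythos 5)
Your proof is correct and is exactly the argument the paper intends: the paper's own proof of Claim~\ref{claim:q_2} is the single line ``Analogous to the proof of Claim~\ref{claim:q_1},'' and your write-up faithfully reproduces that argument with \textsc{Grow-Alt}, Claim~\ref{claim:non-degensym}, and Claim~\ref{claim:degenfullsym} in place of their Case~1 counterparts.
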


\begin{proof}
Analogous to the proof of Claim~\ref{claim:q_1}.\end{proof}

For $0 \leq d \leq t \leq \lceil \ln(n) \rceil$, let $\mathcal{F}_{\textsc{Alt}}(t,d)$ denote a family of representatives for the isomorphism classes of all graphs $F_t$ that algorithm \textsc{Grow-Alt} can possibly generate after exactly $t$ iterations of the while-loop with exactly $d$ of those $t$ iterations being degenerate. 
Let $f_{\textsc{Alt}}(t,d) := |\mathcal{F}_{\textsc{Alt}}(t,d)|$.

\begin{claim}\label{claim:polylogsym}
There exist constants $C_0 = C_0(H_1, H_2)$ and $A = A(H_1,H_2)$ such that $$f_{\textsc{Alt}}(t,d) \leq \lceil \ln(n) \rceil^{(C_0 +1)d} \cdot A^{t-d}$$ for $n$ sufficiently large.
\end{claim}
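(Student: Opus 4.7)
The plan is to mirror the proof of Claim~\ref{claim:polylog} essentially verbatim, with the key modification that a single iteration of \textsc{Grow-Alt} attaches just one copy of $H_1$ or $H_2$ (instead of a full ``flower'' as in \textsc{Grow}). This changes the constants but not the structure of the argument.

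First I would bound the number of vertices of $F_t$. By Claim~\ref{claim:growsym}, each iteration of the while-loop of \textsc{Grow-Alt} introduces at least one new edge, and the newly added edges span a graph on at most $K := \max\{v_1, v_2\}$ vertices (since \textsc{Extend} attaches at most one copy of $H_1$ or $H_2$ in each iteration). Hence $v(F_t) \leq v_1 + Kt$.

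Next I would bound the number of ways to extend $F_i$ to $F_{i+1}$ in a single (possibly degenerate) iteration. Arguing exactly as in the proof of Claim~\ref{claim:polylog}, the graph $F_{i+1}$ is uniquely determined once we specify: the isomorphism type of the graph on $V(F_{i+1})\setminus V(F_i)$ together with its attachment edges (at most $K$ vertices, so bounded by a constant $|\mathcal{G}_K|$), the number $y \leq K$ of intersection vertices with $F_i$, and two ordered $y$-lists indicating how these vertices embed into $V(F_i)$. This gives an upper bound of
\[
\sum_{G \in \mathcal{G}_K}\sum_{y=2}^{v(G)} v(G)^y v(F_i)^y \;\leq\; |\mathcal{G}_K| \cdot K \cdot K^K (v_1 + Kt)^K \;\leq\; \lceil \ln(n) \rceil^{C_0}
\]
for a suitable constant $C_0 = C_0(v_1, v_2)$ and $n$ sufficiently large, where we have used that $t < \ln(n)$ (else the while-loop has already terminated).

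For non-degenerate iterations, observe that \textsc{Eligible-Edge-Alt} fixes the edge $e$ up to isomorphism, and in \textsc{Extend} we either attach a copy of $H_2$ with $V(F_i) \cap V(L) = e$ or a copy of $H_1$ with $V(F_i) \cap V(R) = e$. Up to isomorphism, each such extension is determined by the choice of which edge of $H_1$ or $H_2$ is identified with $e$ and its orientation, giving at most
\[
A := 2e_1 + 2e_2
\]
possibilities. Combining these two bounds and choosing which $d$ of the $t$ iterations are degenerate yields
\[
f_{\textsc{Alt}}(t,d) \;\leq\; \binom{t}{d}\bigl(\lceil \ln(n) \rceil^{C_0}\bigr)^d \cdot A^{t-d} \;\leq\; \lceil \ln(n) \rceil^{(C_0+1)d} \cdot A^{t-d},
\]
as desired, where the final inequality uses $\binom{t}{d} \leq t^d \leq \lceil \ln(n) \rceil^d$. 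No serious obstacle is anticipated; the argument is a routine adaptation of Claim~\ref{claim:polylog}, simplified by the fact that each iteration of \textsc{Grow-Alt} attaches at most one copy of $H_1$ or $H_2$ rather than a full flower structure.
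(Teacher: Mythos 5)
Your proposal is correct and takes precisely the approach the paper intends: the paper's own proof is simply the statement ``Analogous to the proof of Claim~\ref{claim:polylog},'' and your argument is the natural spelling-out of that analogy, with $K = \max\{v_1, v_2\}$ replacing $v_2 + (e_2 - 1)(v_1 - 2)$ and $A = 2e_1 + 2e_2$ replacing $2e_2(2e_1)^{e_2 - 1}$ to reflect that \textsc{Extend} attaches a single copy of $H_1$ or $H_2$ rather than a flower.
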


\begin{proof}
Analogous to the proof of Claim~\ref{claim:polylog}.\end{proof}

Let $\mathcal{F}_{\textsc{Alt}} = \mathcal{F}_{\textsc{Alt}}(H_1, H_2, n)$ be a family of representatives for the isomorphism classes of \emph{all} graphs that can be outputted by \textsc{Grow-Alt} (whether \textsc{Grow-Alt} enters the while-loop or not). 
Note that the proof of the following claim requires Conjecture~\ref{conj:aconj} to be true; in particular, we need that $\hat{\mathcal{A}}$ is finite when $m_2(H_1) = m_2(H_2)$.

\begin{claim}\label{claim:conclusionsym}
There exists a constant $b = b(H_1,H_2) > 0$ such that for all $p \leq bn^{-1/m_2(H_1,H_2)}$, $G_{n,p}$ does not contain any graph from $\mathcal{F}_{\textsc{Alt}}(H_1,H_2,n)$ a.a.s.
\end{claim}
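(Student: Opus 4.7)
The plan is to mirror the proof of Claim~\ref{claim:conclusion}, replacing the ingredients specific to \textsc{Grow} with their \textsc{Grow-Alt} counterparts. Write $\mathcal{F}_{\textsc{Alt}} = \mathcal{F}_0 \cup \tilde{\mathcal{F}}_{\textsc{Alt}}$, where $\mathcal{F}_0$ is the class of graphs returned by \textsc{Grow-Alt} in lines~\ref{line:growaltreturnsg'e} or \ref{line:growalts1s2} (the two special cases), and $\tilde{\mathcal{F}}_{\textsc{Alt}}$ is the class of graphs returned once \textsc{Grow-Alt} enters the while-loop, split further as $\mathcal{F}_1$ (returned at line~\ref{line:growaltreturnfi}) and $\mathcal{F}_2$ (returned at line~\ref{line:growaltreturnminsub}). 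Given $G_{n,p}$, it suffices by Markov's inequality to show that the expected number of copies of graphs from $\mathcal{F}_0$, $\mathcal{F}_1$ and $\mathcal{F}_2$ is $o(1)$ when $p \leq bn^{-1/m_2(H_1,H_2)}$ for a small enough constant $b = b(H_1,H_2)>0$.

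First I would handle $\mathcal{F}_0$. Any $F \in \mathcal{F}_0$ has one of the two forms $\bigcup_{e \in E(T)} \mathcal{S}_{G'}(e)$ or $S_1 \cup S_2$, and is built out of graphs in $\mathcal{S}_{G'}$ together possibly with a non-trivial copy $T \cong H_1$ or $T \cong H_2$. Since every $S \in \mathcal{S}_{G'}$ lies in $\mathcal{C}$ and is $2$-connected, and since $T$ is $2$-connected by Lemma~\ref{lemma:2connected}, $F$ itself is $2$-connected and in $\mathcal{C}$. By the maximality condition in the definition of $\mathcal{S}_{G'}$, however, $F$ is not itself in $\mathcal{S}_{G'}$, and hence is not isomorphic to any graph of $\hat{\mathcal{A}}$; since we are in the $m_2(H_1)=m_2(H_2)$ branch of Definition~\ref{def:a*}, this forces $m(F) > m_2(H_1,H_2) + \varepsilon$. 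Invoking the finiteness of $\hat{\mathcal{A}}$ from Conjecture~\ref{conj:aconj} (which is where Conjecture~\ref{conj:aconj} enters this part of the argument), one shows $\mathcal{F}_0$ is finite, and each $F \in \mathcal{F}_0$ satisfies $n^{v(F)}p^{e(F)} = O(n^{-\gamma'})$ for some $\gamma' > 0$, so Markov gives a.a.s.\ no copy.

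For $\tilde{\mathcal{F}}_{\textsc{Alt}}$ the argument is exactly the book-keeping from the proof of Claim~\ref{claim:conclusion}, now using Claims~\ref{claim:growsym}, \ref{claim:non-degensym}, \ref{claim:degenfullsym}, \ref{claim:q_2} and \ref{claim:polylogsym} instead of their Case 1 analogues. By Claims~\ref{claim:non-degensym} and \ref{claim:degenfullsym}, $\lambda(F_i)$ is non-increasing along the while-loop, so $\lambda(F) \leq \lambda(F_0)$ for every $F \in \mathcal{F}_1$, and $\lambda(F) \leq -\gamma$ for every $F \in \mathcal{F}_2$ by the loop condition in line~\ref{line:growaltwhileconditions}. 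By Claim~\ref{claim:growsym} every iteration adds at least one edge, so $e(F) \geq \ln(n)$ for $F \in \mathcal{F}_1$, and if $F \in \mathcal{F}_2$ was produced after $t$ iterations then $e(F) \geq t$. Since $F_0 \cong H_1$, we have $\lambda(F_0) = 2 - 1/m_2(H_2) \geq 1$ exactly as in Case 1. Setting $b := (Ae)^{-\lambda(F_0)-\gamma} \leq 1$, where $A$ is the constant from Claim~\ref{claim:polylogsym}, one then estimates
\[\sum_{F \in \tilde{\mathcal{F}}_{\textsc{Alt}}} n^{v(F)}p^{e(F)} \;\leq\; \sum_{F \in \mathcal{F}_1} b^{e(F)} n^{\lambda(F_0)} + \sum_{F \in \mathcal{F}_2} b^{e(F)} n^{-\gamma},\]
and bounds each sum term by $(\ln n)^{2(C_0+1)q_2} n^{-\gamma} = o(1)$ using Claims~\ref{claim:q_2} and~\ref{claim:polylogsym} exactly as in~\eqref{eq:final1} and~\eqref{eq:final2}.

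The only non-routine point is the handling of $\mathcal{F}_0$, and there is no real obstacle once one observes that in the $m_2(H_1)=m_2(H_2)$ case it is membership in $\mathcal{C}$ (not $\mathcal{C}^*$) that matters in the definition of $\hat{\mathcal{A}}$, so that the argument establishing $F \in \mathcal{C}$ and $2$-connectedness goes through verbatim with \textsc{Grow-Alt}. Combining the two bounds yields that $G_{n,p}$ a.a.s.\ contains no graph from $\mathcal{F}_0 \cup \tilde{\mathcal{F}}_{\textsc{Alt}} = \mathcal{F}_{\textsc{Alt}}$, which is the desired conclusion.
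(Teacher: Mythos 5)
Your proposal is correct and is essentially what the paper intends: the paper's own proof of Claim~\ref{claim:conclusionsym} is literally the one-line statement that it is ``analogous to the proof of Claim~\ref{claim:conclusion},'' and you have simply spelt out that analogy in full, correctly substituting the Case~2 ingredients (Claims~\ref{claim:growsym}, \ref{claim:non-degensym}, \ref{claim:degenfullsym}, \ref{claim:q_2}, \ref{claim:polylogsym}) for the Case~1 ones and observing that in the $m_2(H_1)=m_2(H_2)$ branch membership in $\mathcal{C}$ rather than $\mathcal{C}^*$ is what the definition of $\hat{\mathcal{A}}$ demands. One tiny point worth being explicit about when unpacking the $\mathcal{F}_1$ estimate: the inequality $\lambda(F_0) = 2 - 1/m_2(H_2) \geq 1$ now comes from $H_1$ being strictly $2$-balanced together with Proposition~\ref{prop:m2h1h2} (which gives $m_2(H_1,H_2)=m_2(H_1)$ and hence that $H_1$ is balanced with respect to $d_2(\cdot,H_2)$), rather than from $H_1$ being strictly balanced with respect to $d_2(\cdot,H_2)$ as in Case~1; but the numerical conclusion is the same, so the bookkeeping goes through verbatim as you claim.
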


\begin{proof}
Analogous to the proof of Claim~\ref{claim:conclusion}.\end{proof}

\begin{proofofnoerrorlemmacasetwo}
Suppose that the call to \textsc{Asym-Edge-Col}$(G)$ gets stuck for some graph $G$, and consider $G' \subseteq G$ at this moment. 
Then \textsc{Grow-Alt}$(G')$ returns a copy of a graph $F \in \mathcal{F}_{\textsc{Alt}}(H_1, H_2, n)$ that is contained in $G' \subseteq G$. 
By Claim~\ref{claim:conclusionsym}, this event  a.a.s.~ does not occur in $G = G_{n,p}$ with $p$ as claimed. Thus \textsc{Asym-Edge-Col} does not get stuck a.a.s.~and, by Lemma~\ref{lemma:acolour}, finds a valid edge-colouring for $H_1$ and $H_2$ of $G_{n,p}$ with $p \leq bn^{-1/m_2(H_1,H_2)}$ a.a.s.\qed
\end{proofofnoerrorlemmacasetwo}

\section{Proof of Theorem~\ref{thm:regular}}\label{sec:claimregular}

Since $H_1$ and $H_2$ are regular graphs, let $\ell_1$ be the degree of every vertex in $H_1$ and $\ell_2$ be the degree of every vertex in $H_2$.
We begin using an approach employed in \cite{kk} and \cite{msss}.
Let $A$ be a $2$-connected graph such that $A \in \mathcal{C}^*(H_1, H_2)$ if $m_2(H_1) > m_2(H_2)$ and $A \in \mathcal{C}(H_1, H_2)$ if $m_2(H_1) = m_2(H_2)$. In both cases, $A \in \mathcal{C}(H_1, H_2)$.
Then, since every vertex is contained in a copy of $H_1$ and a copy of $H_2$ whose edge-sets intersect in exactly one edge, we must have that $\delta(A) \geq \ell_1 + \ell_2 - 1$. Hence
\begin{equation}\label{eq:da}
    d(A) = \frac{e_A}{v_A} \geq \frac{\ell_1 + \ell_2 - 1}{2}.
\end{equation} 
We aim to show $d(A) > m_2(H_1, H_2) + \varepsilon$ for some $\varepsilon = \varepsilon(H_1, H_2) > 0$.
Indeed, if there exists $\varepsilon = \varepsilon(H_1, H_2) > 0$ such that for all such graphs $A$ we have $d(A) > m_2(H_1, H_2) + \varepsilon$ then $\hat{\mathcal{A}}(H_1, H_2, \varepsilon) = \emptyset$, and so Conjecture~\ref{conj:aconj} holds trivially for $H_1$ and $H_2$. 

Since $m_2(H_1) \geq m_2(H_2) > 1$, we have that $H_1$ and $H_2$ cannot be matchings. Hence $\ell_1, \ell_2 \geq 2$. Also, observe that 
\begin{equation}\label{eq:m2f1f21.7}
m_2(H_1, H_2) = \frac{e_1}{v_1 - 2 + \frac{1}{m_2(H_2)}} = \frac{\frac{v_1\ell_1}{2}}{v_1 - 2 + \frac{v_2 - 2}{\frac{v_2\ell_2}{2} - 1}} = \frac{v_1\ell_1}{2v_1 - 4 + \frac{4v_2 - 8}{v_2\ell_2 - 2}}.
\end{equation} 
Furthermore,
\begin{align}\label{eq:implications}
        & \frac{\ell_1 + \ell_2 - 1}{2}     > m_2(H_1, H_2) \\
        \iff  & \ell_1 + \ell_2 - 1               > \frac{v_1v_2\ell_1\ell_2 - 2v_1\ell_1}{2v_1v_2\ell_2 - 4v_2\ell_2 - 4v_1 + 4v_2}\nonumber \\
  \iff  & \ell_1 + \ell_2 - 1               > \frac{2v_1v_2\ell_1\ell_2 - 4v_1\ell_1}{2v_1v_2\ell_2 - 4v_2\ell_2 - 4v_1 + 4v_2}\nonumber \\
   \iff  &  0 < v_1v_2\ell_2(\ell_2 - 1) - 2v_1(\ell_2 - 1) - 2v_2\ell_1(\ell_2 - 1) - 2v_2(\ell_2 - 1)^2\nonumber \\
  \iff  & 0 < v_1v_2\ell_2 - 2v_1 - 2v_2\ell_1 - 2v_2\ell_2 + 2v_2,\nonumber
\end{align} where in the last implication we used that $\ell_2 \geq 2$.

Let $f(v_1, v_2, \ell_1, \ell_2) := v_1v_2\ell_2 - 2v_1 - 2v_2\ell_1 - 2v_2\ell_2 + 2v_2$.
Observe that $\ell_1 \leq v_1 - 1$. 
Hence $-2v_2\ell_1 \geq -2v_1v_2 + 2v_2$ and we have \[f(v_1, v_2, \ell_1, \ell_2) \geq v_1v_2(\ell_2 - 2) - 2v_1 + 4v_2 - 2v_2\ell_2.\] 
Let $g(v_1, v_2, \ell_2) := v_1v_2(\ell_2 - 2) - 2v_1 + 4v_2 - 2v_2\ell_2$ so $f(v_1, v_2, \ell_1, \ell_2) \geq g(v_1, v_2, \ell_2)$. 
Observe that, since $\ell_1 ,\ell_2 \geq 2$, we have that $v_1, v_2 \geq 3$. If $\ell_2 = 2$, then $g(v_1, v_2, 2) = -2v_1 < 0$.
However, if $\ell_2 \geq 3$, then since $v_1, v_2 \geq 3$, we have that 

\begin{equation*}
    \frac{dg}{dv_1} =  v_2(\ell_2 - 2) - 2  >  0;
\end{equation*} 
\begin{equation*}
    \frac{dg}{dv_2} =  v_1(\ell_2 - 2) + 4 - 2\ell_2  =  (v_1 - 2)(\ell_2 - 2)  >  0;
\end{equation*}    
\begin{equation*}
    \frac{dg}{d\ell_2} =  v_1v_2 - 2v_2  =  v_2(v_1 - 2)  >  0.
\end{equation*}
Further, \[g(4,5,3) = 20 - 8 + 20 - 30 = 2 > 0.\]

Thus, for all $v_1 \geq 4, v_2 \geq 5, \ell_2 \geq 3$, we have that \[f(v_1, v_2, \ell_1, \ell_2) \geq g(v_1, v_2, \ell_2) \geq g(4,5,3) = 2 > 0.\]  
Hence, by \eqref{eq:da}-\eqref{eq:implications}, we have that there exists a constant $\varepsilon := \varepsilon(H_1, H_2) > 0$ such that \[d(A) > m_2(H_1, H_2) + \varepsilon.\] 
Thus we only have left the cases when $v_1 = 3$, $v_2 \leq 4$ or $\ell_2 = 2$. 

\begin{case1} $\ell_2 = 2$.\end{case1}

Then $H_2$ is a cycle and \[f(v_1, v_2, \ell_1, 2) = 2v_1v_2 - 2v_1 - 2v_2\ell_1 - 2v_2.\] 
Observe that $\ell_1 \leq v_1 - 2$, as otherwise $H_1$ is a clique, contradicting that $(H_1, H_2)$ is not a pair of a clique and a cycle. Thus $-\ell_1 \geq -(v_1 - 2)$.
Then, since we excluded considering when $H_2$ is a cycle and $H_1$ is a graph with $v_1 = |V(H_1)| \geq |V(H_2)| = v_2$, we have that $v_2 > v_1$, and so \[f(v_1, v_2, \ell_1, 2) \geq 2(v_2 - v_1) \geq 2 > 0.\]
Hence by \eqref{eq:da}-\eqref{eq:implications}, we have that there exists a constant $\varepsilon := \varepsilon(H_1, H_2) > 0$ such that $d(A) > m_2(H_1, H_2) + \varepsilon$. 

\begin{case2}  $v_1 = 3$. \end{case2}

Then $H_1 = K_3$ and $\ell_1 = 2$. Thus \[f(3, v_2, 2, \ell_2) = v_2(\ell_2 - 2) - 6.\]
We can assume $\ell_2 \geq 3$, as otherwise we are in Case~1.
Observe that we cannot have that $v_2 = 6$ and $\ell_2 \geq 3$.
Indeed, when $\ell_2 = 3$, one can check that the only strictly $2$-balanced $3$-regular graph on 6 vertices is $K_{3,3}$. 
But then $(H_1, H_2) = (K_{3}, K_{3,3})$, which is a pair of graphs we excluded from consideration.

When $\ell_2 \geq 4$, we have that $m_2(H_2) > m_2(H_1)$, contradicting that our choice of $H_1$ and $H_2$ meet the criteria in Conjecture~\ref{conj:aconj}.
If $v_2 \leq 5$, then since also $\ell_2 \geq 3$ we must have that $H_2$ is a copy of $K_4$ or $K_5$.\footnote{There exists no graph with both odd regularity and odd order.} But then $m_2(H_2) > m_2(H_1)$. 

Hence $v_2 \geq 7$ and $\ell_2 \geq 3$. Thus $f(3, v_2, 2, \ell_2) > 0$ and, as before, we conclude that there exists a constant $\varepsilon := \varepsilon(H_1, H_2) > 0$ such that $d(A) > m_2(H_1, H_2) + \varepsilon.$

\begin{case3} $v_2 \leq 4$. \end{case3}

Still assuming $\ell_2 \geq 3$, we have that $H_2 = K_4$, $v_2 = 4$ and $\ell_2 = 3$.  If $v_1 \geq \ell_1 + 2$, then \[f(v_1, 4, \ell_1, 3) = 10v_1 - 8(\ell_1 + 2) \geq 2(\ell_1 + 2) > 0.\]
If $v_1 = \ell_1 + 1$, then $H_1$ is a clique. Since $H_1$ and $H_2$ meet the criteria in Conjecture~\ref{conj:aconj}, we must have that $v_1 \geq 5$. Therefore \[f(v_1, 4, \ell_1, 3) = 10v_1 - 8(\ell_1 + 2) = 2v_1 - 8 \geq 2 > 0.\]

Then, as before, there exists a constant $\varepsilon := \varepsilon(H_1, H_2) > 0$ such that $d(A) > m_2(H_1, H_2) + \varepsilon$, as desired.

\section{Concluding remarks}\label{sec:concludingremarks}

In \cite{msss}, the value of $\varepsilon$ was set at $0.01$ for \emph{every} pair of cliques, that is, the value of $\varepsilon$ did not depend explicitly on the graphs $H_1$ and $H_2$.
It would be interesting to know if there exists a single value of $\varepsilon$ satisfying Conjecture~\ref{conj:aconj} for \emph{all} suitable pairs of graphs $H_1$ and $H_2$.

Let us discuss why we use \textsc{Grow-Alt} when $m_2(H_1) = m_2(H_2)$ instead of \textsc{Grow}.
The main reason is that when attaching a copy of $H_1$ precisely at an edge\footnote{That is, $e_1 - 1$ edges and $v_1 - 2$ vertices are added.} of $F_i$ to create $F_{i+1}$ - for some iteration $i$ during \textsc{Grow} or \textsc{Grow-Alt} - we have $\lambda(F_i) = \lambda(F_{i+1})$ if $m_2(H_1) = m_2(H_2)$ and $\lambda(F_i) > \lambda(F_{i+1})$ if $m_2(H_1) > m_2(H_2)$. We would say that iteration $i$ was a non-degenerate iteration if $m_2(H_1) = m_2(H_2)$ and a degenerate iteration if $m_2(H_1) > m_2(H_2)$. 
That is, if we were to use \textsc{Grow} instead of \textsc{Grow-Alt} then the number of possible non-degenerate iterations would depend on the size of $F_i$ and grow too quickly. 

Connected to this observation is why for $A \in \hat{\mathcal{A}}$ we take $A \in \mathcal{C}$ when $m_2(H_1) = m_2(H_2)$, rather than $A \in \mathcal{C}^*$.\footnote{As mentioned at the end of Section~\ref{sec:ainc*orinc}.} 
In the proof of Claim~\ref{claim:growsym}, we need to conclude that $F_i$ is not in $\hat{\mathcal{A}}$ in order to conclude \textsc{Eligible-Edge-Alt} is always successful.
As part of concluding this, we need that $F_i \in \mathcal{C}$, which we get by assuming for a contradiction that there is no edge in $F_i$ eligible for extension in \textsc{Grow-Alt}. 
Hence, if for all $A \in \hat{\mathcal{A}}$ one sets $A \in \mathcal{C}^*$ when $m_2(H_1) = m_2(H_2)$, then one needs to conclude that $F_i \in \mathcal{C}^*$. 
This means that we must assume for a contradiction that for every edge $e \in E(F_i)$ there exists $L \in L_{F_i}^*$ such that $e \in E(L)$. 
Such an argument then requires us to change from using \textsc{Eligible-Edge-Alt} to \textsc{Eligible-Edge} in \textsc{Grow-Alt} in order to arrive at the correct contradiction in the proof. 
But now \textsc{Extend$(F_i, e, G')$} in \textsc{Grow-Alt} may fail to add an edge as it is possible that every $L \in \mathcal{L}_{G'}^*$ containing $e$ has its copies of $H_1$ and $H_2$ intersecting at $e$ both contained fully in $F_i$,
that is, we cannot guarantee that in every iteration $i$ of the while-loop of \textsc{Grow-Alt} that $e(F_{i+1}) > e(F_i)$. 

In order to overcome this we would need to replace procedure Extend with a procedure similar to procedure \textsc{Extend-L}, which is used in \textsc{Grow}. 
If we used \textsc{Extend-L}, then the copy $L$ of $H_1$ could be fully contained in $F_i$ and one of its copies $R'$ of $H_2$ appended to it could contain precisely one edge of $F_i$ and two vertices of $F_i$, with every other appended copy of $H_2$ fully contained in $F_i$.
This would then have to be considered a non-degenerate iteration for \textsc{Grow-Alt} as $\lambda(F_i) = \lambda(F_{i+1})$. 
But, importantly, \textsc{Extend-L$(F_i, e, G')$} depends on the structure of $G'$, not on the structure of $F_i$. 
Thus $R'$ could be attached at a number of edges of $F_i$. 
This would increase the number of possible non-degenerate iterations too much. 

What procedure similar to \textsc{Extend-L} do we choose then? One possibility would be to attach a copy $L$ of $H_2$ with copies of $H_1$ appended to \emph{every one} of its edges $e'$, but this procedure runs into the same problem as \textsc{Extend-L}. 
Overall, this discussion shows how difficult - and maybe impossible - it would be to prove the $m_2(H_1) = m_2(H_2)$ case of Lemma~\ref{lemma:noerror} while stipulating that when $A \in \hat{\mathcal{A}}$ we have $A \in \mathcal{C}^*$.

Also, in \cite{msss}, Marciniszyn, Skokan, Sp\"{o}hel and Steger note that they do not know whether $\mathcal{C}^*(H_1, H_2) = \mathcal{C}(H_1, H_2)$ or not for any pair of cliques.
The author is unaware if this has been resolved for any pair of graphs.
According with the definition of $\hat{\mathcal{A}}$, perhaps it is the case that $\mathcal{C}^*(H_1, H_2) = \mathcal{C}(H_1, H_2)$ whenever $m_2(H_1) = m_2(H_2)$?

Note that our method in this paper does not completely extend to when $m_2(H_2) = 1$.
Indeed, in this case $H_2$ is a forest and so not $2$-connected,
a property which is specifically used in the proofs of Claims~\ref{claim:grow}, \ref{claim:conclusion}, \ref{claim:growsym} and \ref{claim:conclusionsym}. 
In the proofs of Claims~\ref{claim:grow} and \ref{claim:growsym} we require that $F_i$ is $2$-connected for each $i > 0$. 
However, if the last iteration of the while-loop of either \textsc{Grow} or \textsc{Grow-Alt} was non-degenerate when constructing $F_i$, 
then $F_i$ is certainly not $2$-connected if $H_2$ is a tree. A similar problem occurs at the beginning of the proofs of Claims~\ref{claim:conclusion} and \ref{claim:conclusionsym} if $T \cong H_2$.

\section{Acknowledgements}

The author is grateful to Andrew Treglown for suggesting working on the $0$-statement of the Kohayakawa-Kreuter Conjecture and to Robert Hancock and Andrew Treglown for providing many helpful comments on a draft of this paper. The author also thanks the referee for their comments and suggestions.

\end{document}